\theoremstyle{plain}
\newtheorem{theorem}{Theorem}[section]
\newtheorem{corollary}[theorem]{Corollary}
\newtheorem{proposition}[theorem]{Proposition}
\newtheorem{lemma}[theorem]{Lemma}
\newtheorem{claim}[theorem]{Claim}
\newtheorem{question}[theorem]{Question}
\theoremstyle{definition}
\newtheorem{definition}[theorem]{Definition}
\theoremstyle{remark} 
\newtheorem{remark}[theorem]{Remark}
\newcommand{\defini}{\textit}
\newcommand{\scr}{\mathscr}
\renewcommand{\geq}{\geqslant}
\renewcommand{\leq}{\leqslant} 
\renewcommand{\ge}{\geqslant}
\renewcommand{\le}{\leqslant} 
\newlength{\arrow}
\newcommand{\pei}{\mathbf{C}}
\newcommand{\weak}{\mathbf{C}^\star}
\newcommand{\Ab}{\mathfrak{A}}
\newcommand{\smx}[1]{}
\definecolor{mycolor}{RGB}{81, 163, 193}
\begin{document}
	\title{Percolation on graphs of polynomial growth is local:\\analyticity, supercritical sharpness, isoperimetry}
	\date{\today} 
	
	\author{Sébastien Martineau\footnote{Sorbonne Universit\'e, LPSM, \nolinkurl{smartineau@lpsm.paris}.} \and Christoforos Panagiotis\footnote{University of Bath, \nolinkurl{cp2324@bath.ac.uk}}}

	\maketitle
	\begin{abstract}
    We investigate locality of the supercritical regime for Bernoulli percolation on transitive graphs with polynomial growth, by which we mean the following. Take a transitive graph of polynomial growth $\mathscr{G}$ satisfying $p_c(\mathscr{G})<1$ and take $p>p_c(\mathscr{G})$. Let $\mathscr{H}$ be another such graph and assume that $\mathscr{G}$ and $\mathscr{H}$ have the same ball of radius $r$ for $r$ large. We prove that various quantities regarding percolation of parameter close to $p$ on $\mathscr{H}$ can be well understood from $(\mathscr{G},p)$ alone.
    
    This includes uniform versions of supercritical sharpness as well as the Kesten--Zhang bound on the probability of observing a large finite cluster and the exponential cluster repulsion bound of Pete: the constants involved can be chosen to depend only on $(\mathscr{G},p)$. We also prove that $\theta_\mathscr{H}$ is an analytic function of $p$ in the whole supercritical regime and that, for a suitable $\varepsilon=\varepsilon(\mathscr{G},p)>0$, the analytic extension of $\theta_\mathscr{H}$ to the $\varepsilon$-neighbourhood of $p$ in $\mathbb C$ is, uniformly, well approximated by the analytic extension of $\theta_\mathscr{G}$. The proof relies on new results on the connectivity of minimal cutsets; in particular, we answer a question asked by Babson and Benjamini in 1999. We further discuss connections with the conjecture of non-percolation at criticality.
	\end{abstract}

\section{Introduction}

One of the simplest models of statistical mechanics is Bernoulli percolation: if one picks a random subgraph by selecting edges in an i.i.d.~way, then investigating the properties of the connected components of this random graph leads to an interesting phase transition.

More precisely, let $\mathscr{G}$ be a graph. Throughout this paper, all graphs are assumed to be simple, non-empty, locally finite, and connected. We are interested in Bernoulli bond percolation: each edge of $\mathscr{G}$ is either deleted (called closed) or retained (called open) independently with retention probability $p\in [0,1]$ to obtain a random subgraph $\omega$ of $\mathscr{G}$. We write $\mathbb{P}_{p,\mathscr{G}}$ or simply $\mathbb{P}_p$ for the law of $\omega$. Connected components of $\omega$ are referred to as \defini{clusters}. Percolation theory is primarily concerned with the structure of clusters, and how this structure depends on $p$ and on the geometry of $\mathscr{G}$. Of particular interest are {\em phase transitions}, where a small change in $p$ imposes a dramatic change of the structure.

Given a transitive graph $\mathscr{G}$ and a vertex $o$ in $\mathscr{G}$, one can introduce the function \[\theta_{\mathscr{G}}:p\longmapsto \mathbb{P}_{p,\mathscr{G}}(o\longleftrightarrow \infty),\]where $\{o\longleftrightarrow \infty\}$ denotes the event that the cluster of $o$ is infinite. The origin $o$ can be any vertex of $\mathscr{G}$ and all choices of $o$ give rise to the same function, by transitivity of $\mathscr{G}$. The function $\theta_{\mathscr{G}}$ being non-decreasing, one can define $p_c(\mathcal{G})$ to be the unique parameter in $[0,1]$ such that the following two conditions hold:
\begin{itemize}
	\item for all $p\in [0,p_c(\mathscr{G}))$, we have $\theta_\mathscr{G}(p)=0$,
	\item for all $p\in (p_c(\mathscr{G}),1]$, we have $\theta_\mathscr{G}(p)>0$.
\end{itemize}

In the most classical framework, one takes $\mathscr{G}$ to be a fixed graph within some suitable class of graphs and studies the behaviour of $\theta$ and other macroscopic observables as the value of $p$ varies.  Benjamini and Schramm \cite{MR1423907} initiated the systematic study of percolation on transitive graphs, and in recent decades, there has been growing interest in understanding how various percolation observables depend on the underlying graph $\mathscr{G}$. A major conjecture of this field was Schramm's Locality Conjecture, which we briefly review below.

Given two transitive graphs $\mathscr{G}$ and $\mathscr{H}$, introduce the following quantity:
\[
R(\mathscr{G},\mathscr{H}):=\max \{r\in\mathbb{N}\cup\{\infty\}\,:\ B_\mathscr{G}(r)\simeq B_\mathscr{H}(r)\}.
\]
The quantity $B_\mathscr{G}(r)$ denotes the $r$-ball of $\mathscr{G}$, and the choice of root is irrelevant as the graph is transitive. Besides, isomorphisms are asked to map the chosen centre of $B_\mathscr{G}(r)$ to that of $B_\mathscr{H}(r)$. We define the \defini{local topology} as the unique metrisable topology on the space of isomorphism classes of transitive graphs satisfying
\[
\mathscr{G}_n\xrightarrow[n\to\infty]{}\mathscr{G}_{\infty}\quad \iff\quad R(\mathscr{G}_n,\mathscr{G}_\infty)\xrightarrow[n\to\infty]{}\infty.
\]
Around 2008, Schramm conjectured that for transitive graphs, the critical threshold $p_c(\mathscr{G})$ depends only on the local geometry of $\mathscr{G}$ in the sense that  $p_c(\mathscr{G}_n)\to p_c(\mathscr{G})$ as $\mathscr{G}_n\to \mathscr{G}$ in the local topology, provided that $\sup_n p_c(\mathscr{G}_n)<1$.
For transitive graphs of polynomial growth (meaning that the cardinality of the $r$-ball is upper bounded by a polynomial in $r$), the case of isotropic Euclidean lattices was known prior to the conjecture \cite{grimmettmarstrand}, then possibly anisotropic abelian Cayley graphs were treated \cite{MT17}, before all transitive graphs of polynomial growth were covered in \cite{cmtlocality}. As for graphs with fast growth, the first case to be understood was that of uniformly nonamenable graphs converging to a regular tree of degree at least 3 \cite{benjamini2011critical}, which was followed by the case of transitive graphs with exponential growth \cite{2018arXiv180808940H} and that of graphs where the heat kernel satisfies a certain uniform stretched-exponential upper bound \cite{HH21}. In the end, building upon several works \cite{supercritpoly, 2018arXiv180808940H, nonunimodulartom}, Easo and Hutchcroft established the conjecture in full generality \cite{easo-hutchcroft-locality}. Let us mention that the assumption ``$\sup_n p_c(\mathscr{G}_n)<1$'' can safely be replaced by the weaker condition $p_c(\mathscr{G}_n)<1$ for all $n$ large enough. Interestingly, locality of $p_c$ has also been studied in the context of finite transitive graphs \cite{E24}.

In this article, we are interested in how observables that carry more information about the behaviour of the model depend on the underlying graph $\mathscr{G}$. In this direction, one expects locality phenomena for observables beyond $p_c$, such as the percolation function $\theta$ away from criticality. As explained below, a form of locality for $\theta$ was announced in the parallel work \cite{easo-hutchcroft-locality}, and in this article we are interested in establishing stronger forms of locality.

The geometric setup of the present article concerns transitive graphs for which the cardinality of the ball of radius $r$ is superlinear in $r$ and bounded above by a polynomial in $r$. The set of all isomorphism classes of such graphs is denoted by $\mathfrak{G}$. For a transitive graph, having superlinear growth is equivalent to $p_c<1$; see \cite{EST24, DGRSY20}.
Our theorems embody the following message: {\it given some $\mathscr{G}\in\mathfrak{G}$, even if we know only a ball of sufficiently large radius rather than the entire graph, we can still deduce much about the supercritical regime of percolation on $\mathscr{G}$.} 

In the setup of $\mathfrak{G}$, a key result regarding the supercritical behaviour of the model and the locality of $p_c$ is the sharpness result stated in Theorem~\ref{thm:cmt-sharpness} below; see \cite{supercritpoly}. Given a graph $\mathscr{G}$, we write $V(\mathscr{G})$ for its set of vertices, $E(\mathscr{G})$ for its set of edges, and $d_\mathscr{G}$ for its graph distance. Given two vertices $u$ and $v$ in $\mathscr{G}$, we write $\{u\longleftrightarrow v\}$ for the event that $u$ and $v$ belong to the same cluster.

\begin{theorem}[Contreras--Martineau--Tassion, 2024]\label{thm:cmt-sharpness}
	Let $\mathscr{G}\in\mathfrak{G}$ and $p_0\in (p_c(\mathscr{G}),1]$. Then, there is a constant $c>0$ such that the following holds:
	\begin{equation}
	\forall p\in [p_0,1],\qquad \forall u,v\in V(\mathscr{G}),\qquad \mathbb{P}_{p,\mathscr{G}}(u\longleftrightarrow v \centernot\longleftrightarrow \infty) \le \exp(-c d_\mathscr{G}(u,v)).
	\end{equation}
\end{theorem}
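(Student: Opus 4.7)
The plan is to prove this via a multiscale renormalisation in the spirit of Grimmett--Marstrand and Antal--Pisztora, adapted to the metric setting of polynomial-growth transitive graphs. The standing inputs are: $\theta_\mathscr{G}(p_0)>0$, uniqueness of the infinite cluster throughout $(p_c(\mathscr{G}),1]$ (which holds because polynomial growth implies amenability of $\mathscr{G}$, so Burton--Keane applies), and the polynomial upper bound on $|B_\mathscr{G}(r)|$.

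At a scale $L=L(\mathscr{G},p_0)$ to be chosen, I would fix an $L$-net $X\subset V(\mathscr{G})$ and form a coarsened graph on $X$ with an edge between $x,y$ whenever $d_\mathscr{G}(x,y)\le 2L$. Call $x\in X$ \emph{good} if the configuration restricted to $B_\mathscr{G}(x,3L)$ contains a ``locally giant'' cluster of diameter at least $L$, and moreover any two clusters of diameter at least $L/4$ inside $B_\mathscr{G}(x,3L)$ are connected to each other inside that ball. The central step is a \emph{finite-size criterion}: for every $\varepsilon>0$ there exists $L$ such that $\mathbb{P}_{p,\mathscr{G}}(x\text{ is good})\ge 1-\varepsilon$ uniformly in $p\in[p_0,1]$. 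This is obtained by combining the positivity of $\theta(p_0)$ (to produce locally giant clusters with positive probability) with a sprinkling argument, exploiting $p_0<1$ to add a small density of extra open edges that glue distinct locally giant clusters together, the underlying principle being that they must merge under global uniqueness of the infinite cluster.

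Since ``$x$ is good'' depends on edges in a ball of radius $3L$ around $x$, the random field of good vertices on $X$ is finitely dependent, and the Liggett--Schonmann--Stacey theorem dominates it from below by an i.i.d.\ Bernoulli field on $X$ of density arbitrarily close to $1$. The local-uniqueness clause in the definition of good implies that along any path of good vertices $x_0,x_1,\dots,x_k$ in the coarse graph, the locally giant clusters of consecutive $x_i$'s merge into a single connected component of $\omega$; hence on $\{u\longleftrightarrow v\centernot\longleftrightarrow\infty\}$ the coarse connected component of good vertices containing a coarse point nearest to $u$ must be finite, otherwise the cluster of $u$ in $\omega$ would be infinite. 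A Peierls-type contour argument on the coarse graph, which inherits the polynomial volume bound from $\mathscr{G}$, then yields an estimate of order $\exp(-c\,d_\mathscr{G}(u,v)/L)$ with $c=c(\mathscr{G},p_0)>0$, which is the desired bound.

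The hard part is the finite-size criterion, specifically the sprinkling step that fuses several locally giant clusters into a single one. On $\mathbb{Z}^d$ one invokes axis-aligned slab geometry; here one has access only to polynomial volume growth, global uniqueness, and the freedom $p_0<1$. The route I would take is to first upgrade uniqueness to a quantitative statement --- that the probability for two well-separated sub-balls of $B_\mathscr{G}(x,3L)$ to intersect two \emph{distinct} macroscopic clusters of some enclosing ball tends to $0$ as the scale grows --- and then to sprinkle edges along a path in $\mathscr{G}$ joining the two sub-balls, which is cheap since such a path has length controlled by $L$. A second delicate point is uniformity in $p$: the truncated two-point function is not monotone, so every quantitative estimate must be arranged to remain valid as $p$ varies in $[p_0,1]$, either through monotonicity or through a coupling transferring estimates at $p_0$ to larger $p$.
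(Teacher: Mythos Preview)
The paper does not prove Theorem~\ref{thm:cmt-sharpness}; it is quoted as the main result of \cite{supercritpoly}. What the paper does prove is the uniform/local version, Theorem~\ref{thm:sharpness-local}, and its proof (Section~\ref{sec:strong-proba-estimates}) follows the same renormalisation architecture you outline: one declares a vertex $x$ to be $N$-good if a connection event and a local-uniqueness event $U(2N,5N,x)$ both hold in a ball of radius $O(N)$, observes that on $\{u\longleftrightarrow v\centernot\longleftrightarrow\infty\}$ the exposed boundary $\partial\mathcal{C}_o(N)$ is a $t$-connected set of $N$-bad vertices of size $\gtrsim d(u,v)$, and then runs a Peierls-type union bound (Proposition~\ref{prop: finite}) using that being $N$-bad has small probability and is finite-range. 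So the skeleton of your argument matches.

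The substantive gap in your proposal is the finite-size criterion itself. The paper takes as a black box the estimate
\[
\mathbb{P}_p\bigl(B_{n}(x)\longleftrightarrow S_{10n}(x),\ U(2n,5n,x)\bigr)\ \ge\ 1-e^{-\sqrt{n}}
\]
from \cite{supercritpoly}, which it calls ``the main ingredient we will use from that paper''. Your plan is to \emph{derive} such a statement by sprinkling \`a la Grimmett--Marstrand, but on a general transitive graph of polynomial growth this is exactly the hard theorem of \cite{supercritpoly}, and your sketch does not supply it. Two concrete issues: (i) ``sprinkling along a path of length $\lesssim L$'' to merge two macroscopic clusters costs $p^{cL}$, which is exponentially small in $L$ and cannot yield probability $1-\varepsilon$; what is needed is that the clusters come within $O(1)$ of each other somewhere, and that is precisely the quantitative local-uniqueness statement you are trying to prove. (ii) Passing from Burton--Keane uniqueness to a quantitative statement in a fixed ball is not soft; no compactness or ergodicity argument produces a rate, and on non-Euclidean polynomial-growth graphs there is no slab geometry to fall back on. The proof in \cite{supercritpoly} uses a genuinely different mechanism (a two-scale argument exploiting polynomial growth in a quantitative way), not a direct sprinkling.

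A smaller point: once the good/bad input is available, the paper does not invoke Liggett--Schonmann--Stacey. Because the bad-probability is already quantitatively small ($\le (D+1)e^{-\sqrt N}$), a direct first-moment Peierls bound over $2r$-separated, $(4r+t)$-connected subsets suffices (see the proof of Proposition~\ref{prop: finite}); this is what allows the constants to be tracked and made uniform over $p$ and over nearby graphs $\mathscr{H}$.
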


\noindent The above bound is, in fact, sharp at the exponential scale. 

In recent decades, there has been growing interest in obtaining finitary versions of certain important results, such as Gromov's characterisation of Cayley graphs of polynomial growth \cite{gromov81} and Trofimov's subsequent characterisation of transitive graphs of polynomial growth \cite{trofimov85polynomial, woess91}; see \cite{shalomtao2010, BGT12, tesseratointon, tesseratointonnew} for finitary versions. Inspired by these works, in the present paper, we obtain the following finitary (a.k.a.\ local) version of Theorem~\ref{thm:cmt-sharpness}.

\begin{theorem}\label{thm:sharpness-local}
	Let $\mathscr{G}\in\mathfrak{G}$ and $p_0\in (p_c(\mathscr{G}),1]$. Then, there is a constant $c>0$ such that the set of all $\mathscr{H}\in\mathfrak{G}$ satisfying $p_c(\mathscr{H})<p_0$ and the following condition is a neighbourhood of $\mathscr{G}$:
	\begin{equation}
	\forall p\in [p_0,1],\qquad \forall u,v\in V(\mathscr{H}),\qquad \mathbb{P}_{p,\mathscr{H}}(u\longleftrightarrow v \centernot\longleftrightarrow \infty) \le \exp(-c d_\mathscr{H}(u,v)).
	\end{equation}
    ~\hfill\hyperlink{target:sharpness}{\footnotesize\color{mycolor}$\downarrow$ context and proof}
\end{theorem}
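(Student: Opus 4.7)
The plan is to transfer the sharpness estimate from $\mathscr{G}$ to locally close $\mathscr{H}$ by a renormalisation argument built on top of Theorem~\ref{thm:cmt-sharpness}. First I would pick an intermediate parameter $p_1\in(p_c(\mathscr{G}),p_0)$ and apply Theorem~\ref{thm:cmt-sharpness} to $(\mathscr{G},p_1)$ to obtain a rate $c_1>0$ such that $\mathbb{P}_{p_1,\mathscr{G}}(x\longleftrightarrow y \centernot\longleftrightarrow \infty)\le e^{-c_1 d_\mathscr{G}(x,y)}$. Then I would choose a large scale $R=R(\mathscr{G},p_0,\varepsilon)$ so that the probability that a given vertex belongs to a finite cluster reaching $\partial B_\mathscr{G}(\cdot,R/2)$ is at most $\varepsilon$, where $\varepsilon>0$ is a small constant to be fixed later.

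Next I would introduce a local \emph{good event} $\mathsf{Good}_R(x)$ depending only on the configuration inside $B(x,R)$, designed to certify that $x$ lies in a well-developed open cluster with robust connections to $\partial B(x,R)$ (e.g.\ several disjoint open paths from $x$ to the boundary, none contained in a small finite cluster). By the choice of $R$ and $p_1$, one has $\mathbb{P}_{p_1,\mathscr{G}}(\mathsf{Good}_R(x))\ge 1-\varepsilon'$ for some $\varepsilon'=\varepsilon'(\varepsilon)$ that is small. Since $\mathsf{Good}_R$ is increasing and depends only on $B(x,R)$, whenever $R(\mathscr{G},\mathscr{H})\ge 2R$ and $p\ge p_0>p_1$, monotonicity in $p$ together with the local isomorphism gives $\mathbb{P}_{p,\mathscr{H}}(\mathsf{Good}_R(x))\ge 1-\varepsilon'$ uniformly in $x\in V(\mathscr{H})$.

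The theorem is then reduced to a Peierls-type cutset argument on $\mathscr{H}$. On the event $\{u\longleftrightarrow v\centernot\longleftrightarrow\infty\}$, the common finite open cluster is enclosed by a closed edge cutset $\Pi$ whose $d_\mathscr{H}$-extent is at least $d_\mathscr{H}(u,v)/2$. One extracts an $R$-separated set of vertices $x_1,\ldots,x_N$ adjacent to $\Pi$ with $N=\Omega(d_\mathscr{H}(u,v)/R)$; at each such $x_i$, the event $\mathsf{Good}_R(x_i)$ must fail, since the cutset obstructs the robust connection to infinity inside $B(x_i,R)$. Because the balls $B(x_i,R/2)$ are pairwise disjoint, the failures are independent, yielding probability at most $(\varepsilon')^N$. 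It then remains to sum over admissible cutset patterns; this is controlled by the polynomial volume growth of $\mathscr{H}$ (which coincides with that of $\mathscr{G}$ on balls of radius $\le R$) together with an enumeration bound for minimal cutsets at scale $R$.

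The main obstacle is precisely this last step. Minimal edge cutsets in a polynomial-growth transitive graph are not a priori connected in the adjacency sense, and the classical $c^n$ enumeration bound used for Peierls arguments on $\mathbb{Z}^d$ does not transfer for free. This is where I expect the paper's new results on the connectivity of minimal cutsets—answering the 1999 question of Babson and Benjamini advertised in the abstract—to enter, providing a quantitative \emph{near-connectivity} statement at scale $R$ valid uniformly over $\mathscr{H}$ in a local neighbourhood of $\mathscr{G}$. Once such a bound is available, the cutset enumeration is absorbed by the factor $(\varepsilon')^N$ for $\varepsilon$ small enough and $R$ large enough, yielding exponential decay with a uniform constant $c=c(\mathscr{G},p_0)$ as required.
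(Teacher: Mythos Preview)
Your outline captures the right architecture---a coarse-grained Peierls argument, exponential decay of ``bad'' vertices at a well-chosen scale, and the paper's locally uniform cutset connectivity (Theorem~\ref{thm:finitary-timar}) to control the enumeration---and this is indeed how the paper proceeds. There is, however, a genuine gap in your choice of the good event and in how you obtain uniformity in $p$.

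You require $\mathsf{Good}_R(x)$ to be simultaneously (i) increasing in the configuration, so that monotonicity upgrades the estimate from $p_1$ to all $p\ge p_0$, and (ii) guaranteed to fail at vertices adjacent to the closed cutset surrounding a large finite cluster. These two demands are incompatible. A vertex $x$ near the boundary of $\mathcal{C}_u$ typically sees \emph{two} long clusters in its $R$-neighbourhood: a piece of the large finite cluster $\mathcal{C}_u$ and a piece of the infinite cluster just outside. Any increasing local event of the type you describe (many disjoint open paths from $x$ to $\partial B(x,R)$, large local cluster volume, etc.) is perfectly happy in this situation; the cutset does not obstruct local connectivity, it only obstructs the non-local statement ``$x\leftrightarrow\infty$''. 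So the deduction ``$\mathsf{Good}_R(x_i)$ must fail because the cutset obstructs connections'' is unjustified.

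The paper's fix is to use a \emph{non-monotone} good event: $x$ is $N$-good when the local uniqueness event $U(2N,5N,z)$ holds for $z$ in a neighbourhood of $x$ (together with a crossing event). Near the boundary of a large finite cluster one necessarily sees two distinct crossing clusters in the annulus, so $U$ fails and $x$ is $N$-bad; this is Proposition~\ref{prop: bad vert}. Since this event is not increasing, uniformity over $p\in[p_0,1]$ is obtained differently: one first proves exponential decay of the $N$-bad component for $p$ in a small interval around each $p_0$ via continuity of $p\mapsto\mathbb{P}_p(o\text{ is }N\text{-bad})$ (Proposition~\ref{prop: finite} and Proposition~\ref{prop: exp dec}), and then covers $[p_0,1]$ by compactness. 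With the correct bad event in hand, the rest of your plan matches the paper: on $\{u\leftrightarrow v\centernot\leftrightarrow\infty\}$ the exposed boundary $\partial\mathcal{C}_u(N)$ is a finite minimal cutset of size $\gtrsim d_\mathscr{H}(u,v)$ consisting entirely of $N$-bad vertices, and Theorem~\ref{thm:finitary-timar} provides the uniform coarse connectivity needed for the union bound.
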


Likewise, a sharp upper bound on the probability that the cluster of the origin is finite but has cardinality at least $n$ was obtained in \cite[Theorem~1.2]{supercritpoly}. We prove that these estimates hold uniformly on a neighbourhood of $\mathscr{G}$ in $\mathfrak{G}$; see Theorem~\ref{thm:cluster size} in Section~\ref{sec:large-cluster}. 

Beyond the cluster size distribution, of particular interest is the isoperimetry of $\mathcal{C}_o$. In Theorem~\ref{thm:cluster repulsion}, we prove that when $\mathcal{C}_o$ is finite, the number of edges that touch both $\mathcal{C}_o$ and the infinite cluster has an exponential tail, uniformly on a neighbourhood of $\mathscr{G}$ in $\mathfrak{G}$. This extends an earlier result of Pete \cite{pete2008note} from the context of $\mathbb{Z}^d$ to that of $\mathfrak{G}$. Using this exponential cluster repulsion result together with the stretched exponential decay of the cluster size, and following the argument of Pete \cite{pete2008note}, we show that the isoperimetric profile of the infinite cluster roughly coincides with that of the original lattice. As established in \cite{pete2008note} in the case of $\mathbb{Z}^d$, this has important implications, such as that simple random walk on the largest cluster of a finite box $[-n,n]^d$ has, with high probability, $L^{\infty}$-mixing time $\Theta(n^2)$, and that the return probability $p_n(o,o)$ on the infinite cluster almost surely decays as $p_n(o,o) = O(n^{-d/2})$. 
We expect all these random walk results to carry over to the context of $\mathfrak{G}$, but verifying this is beyond the scope of the present paper.

Our techniques go beyond such results and can be used to prove convergence of diverse percolation-related quantities, including the function $\theta$.
It is classical in statistical mechanics to define a phase transition to be a parameter where an observable describing the macroscopic behaviour of the system fails to be analytic. For percolation theory, the standard choice is $\theta$, and indeed $\theta$ witnesses the phase transition of the model at $p_c$. It was recently proved in \cite{analyticity} that for the hypercubic lattice $\mathbb{Z}^d$, its cluster-density function $\theta_\mathscr{G}$ is analytic on $(p_c,1]$, thereby answering an old question of Kesten \cite{KestenAnalytic}. Our first result in this direction extends this work to all graphs in $\mathfrak{G}$, thus verifying that $\theta$ does not witness any other phase transitions.

\begin{theorem}\label{thm:analyticity-poly}
    For every $\mathscr{G}\in\mathfrak{G}$, the function $\theta_\mathscr{G}$ is analytic on $(p_c(\mathscr{G}),1]$.\hfill\hyperlink{target:analytic}{\footnotesize\color{mycolor}$\downarrow$ proof}
\end{theorem}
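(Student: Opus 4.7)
My plan is to adapt the cluster-expansion strategy of \cite{analyticity}---which handles $\mathscr G = \mathbb Z^d$---to the general polynomial-growth setting, substituting the lattice-specific inputs by the tools developed earlier in this paper: the uniform supercritical sharpness of Theorem~\ref{thm:cmt-sharpness} and the new connectivity results for minimal cutsets answering the 1999 question of Babson and Benjamini.

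Fix $p_0 \in (p_c(\mathscr G), 1]$; the goal is to extend $1 - \theta_\mathscr G$ to a holomorphic function on some complex disc $D(p_0, \varepsilon)$. The starting point is the outer-boundary expansion
\[
1 - \theta_\mathscr G(p) \;=\; \sum_{n \geq 1} f_n(p), \qquad f_n(p) \;:=\; \sum_{|S| = n} (1-p)^{|S|}\, Z_S(p),
\]
where $S$ ranges over the minimal edge cutsets separating $o$ from $\infty$ and $Z_S(p)$ is the partition function (a polynomial in $p$) over interior configurations connecting $o$ to the inner boundary of $S$. Each $f_n$ is a polynomial, and the identity $f_n(p) = \mathbb P_{p,\mathscr G}(o \not\leftrightarrow \infty,\ |\partial_E \mathrm{clus}(o)| = n)$ combined with Theorem~\ref{thm:cmt-sharpness} gives exponential decay $f_n(p) \leq C e^{-c n}$ for real $p \geq p_0$ (a boundary of size $n$ forces the cluster to have diameter growing in $n$ thanks to polynomial growth). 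To pass to complex $p \in D(p_0, \varepsilon)$, one needs two ingredients: (i) an enumeration bound $\#\{S : o \text{ enclosed by } S,\ |S| = n\} \leq K^n$, which is delivered by the paper's new minimal-cutset connectivity results via a Peierls-style count once cutsets are connected in a bounded-degree auxiliary graph rooted at their nearest edge to $o$; and (ii) a termwise comparison $|(1-p)^{|S|} Z_S(p)| \leq e^{\eta(\varepsilon) |S|} (1-p_0)^{|S|} Z_S(p_0)$ with $\eta(\varepsilon) \to 0$ as $\varepsilon \to 0$. Combining (i), (ii) and the real-variable decay yields $|f_n(p)| \leq K^n e^{\eta n}\, C e^{-c n}$, absolutely summable for $\varepsilon$ small enough, and hence the claimed holomorphic extension.

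The main obstacle I expect is ingredient (ii). In a graph of polynomial growth of degree $D$, a cutset of size $n$ may enclose up to $\sim n^{D/(D-1)}$ edges, so the naive bound $|Z_S(p)|/Z_S(p_0) \leq e^{\eta(\varepsilon)\cdot |E_S|}$ obtained from $|p|^a|1-p|^b \leq e^{\eta(a+b)}\, p_0^a(1-p_0)^b$ produces an exponent that is superlinear in $|S|$ and destroys summability. I would circumvent this by decomposing interior configurations according to the cluster of $o$: by a second application of sharpness at $p_0$, now \emph{inside} the region enclosed by $S$, this cluster has size linear in $|S|$, so the exponential deformation cost is borne by only linearly many edges, while the ``passive'' closed edges filling the rest of the interior are absorbed into the $(1-p_0)$-weight already present in the expansion. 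Making this decomposition uniform in $S$ and in $p$ near $p_0$ is the technical crux; once it is in place, the remaining summations are a routine absolute-convergence computation.
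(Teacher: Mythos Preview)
Your diagnosis of the main obstacle is correct, but the proposed fix does not close the gap. Supercritical sharpness (Theorem~\ref{thm:cmt-sharpness}) controls the \emph{diameter} of finite clusters, not their volume: a cluster whose edge-boundary has size $n$ has diameter $O(n)$ but may contain $\Theta(n^{d/(d-1)})$ edges, so the assertion that ``this cluster has size linear in $|S|$'' is false in general. Decomposing $Z_S$ according to $\mathcal C_o$ therefore does not help, since each resulting monomial $p^{|E(\mathcal C_o)|}(1-p)^{|\partial_E \mathcal C_o|}$ still has degree $|E(\mathcal C_o)|+|\partial_E\mathcal C_o|$, which is superlinear in the boundary size; and the edges of the interior lying outside $\mathcal C_o$ are unconstrained (they have already been summed out), so there is no extra $(1-p_0)$-weight available to absorb them. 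Concretely, $\mathbb E_{p_0}\big[e^{\eta|E(\mathcal C_o)|}\mathbbm 1_{|\mathcal C_o|<\infty}\big]=\infty$ for every $\eta>0$, because $\mathbb P_{p_0}(|\mathcal C_o|\ge k,\ |\mathcal C_o|<\infty)$ decays only stretched-exponentially in $k$. Note also that \cite{analyticity} itself does not sum over raw cutsets for precisely this reason; it already works with a renormalised boundary on $N\mathbb Z^d$.

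The paper resolves the obstacle by replacing the raw outer boundary with a coarse-grained object. One fixes a large scale $N$, calls a vertex $N$-\emph{bad} if a local-uniqueness event from \cite{supercritpoly} fails in its $O(N)$-neighbourhood, and defines an \emph{interface} to be a $t$-connected component of bad vertices separating $o$ from infinity (the constant $t$ coming from the cutset-connectivity results). The crucial gain is that the event ``the interface $\mathscr I$ occurs'' is \emph{local}: it is measurable with respect to the edges within distance $O(N)$ of $\mathscr I$, hence with respect to $O(|\mathscr I|)$ edges, so the corresponding polynomial has degree \emph{linear} in $|\mathscr I|$. One then obtains an exact expression for $1-\theta$ via inclusion--exclusion over disjoint collections of interfaces (multi-interfaces); the disjointness (Lemma~\ref{lem: disjoint}) together with a Hardy--Ramanujan partition count keeps the combinatorics under control, and the exponential rarity of bad vertices supplies the required tail. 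The idea you are missing is this renormalisation, which trades the volume-filling cluster for a thin surface whose occurrence is a local event.
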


Theorem~\ref{thm:analyticity-poly} and the forthcoming Theorem~\ref{thm:analyticity-local} can be extended to observables of the form $\mathbb{E}_p[F(\mathcal{C}_o) \mathbbm{1}_{|\mathcal{C}_o|<\infty}]$, where $F$ is a function that grows subexponentially in the diameter of the cluster $\mathcal{C}_o$ of the origin $o$. Several natural observables that describe the macroscopic behaviour of the system can be expressed in this form, such as the \emph{(truncated) susceptibility}, the \emph{(finite) open clusters per vertex}, the \emph{truncated $k$ point function}
and the \emph{(non-truncated) $k$ point function}. We remark that the proof of analyticity of the latter relies on the uniqueness of the infinite cluster; see \cite{analyticity, PaSevAn} for further details and precise definitions.

On the one hand, analytic functions enjoy a form of local-to-global rigidity themselves, due to the uniqueness theorem; hence so does $\theta$. On the other hand, away from $p_c$, one expects the values of $\theta$ to depend only on the local geometry of the graph, which itself provides global geometric information on $\mathscr{G}$ due to finitary versions of Trofimov's theorem. In the following theorem, we refine Theorem~\ref{thm:analyticity-poly} by establishing a strong form of locality with respect to both the analytic extension of $\theta$ and the underlying graph. 

\begin{theorem}\label{thm:analyticity-local}
    Let $\mathscr{G}\in\mathfrak{G}$ and $p_0\in (p_c(\mathscr{G}),1]$.
    Then, there are constants $c,C>0$ and a connected open neighbourhood $U$ of $[p_0,1]$ such that for every $k$ large enough and every $\mathscr{H}\in \mathfrak{G}$ with $R(\mathscr{G},\mathscr{H})\geq k$ we have the following: there are unique holomorphic functions $f_\mathscr{H}$ (resp. $f_\mathscr{G}$) from $U$ to $\mathbb{C}$ agreeing with $\theta_\mathscr{H}$ (resp. $\theta_\mathscr{G}$) on $[p_0,1]$, and they satisfy $\|f_\mathscr{H}-f_\mathscr{G}\|_\infty <Ce^{-ck}$.\\
    \phantom{a}\hfill\hyperlink{target:analytic}{\footnotesize\color{mycolor}$\downarrow$ proof}
\end{theorem}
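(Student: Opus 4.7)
The plan is to extend the series representation of $\theta$ underlying the proof of Theorem~\ref{thm:analyticity-poly} and to control its tail uniformly on a neighbourhood of $\mathscr{G}$ in $\mathfrak{G}$. Writing
\[
1-\theta_\mathscr{G}(p) \;=\; \sum_{V\ni o,\ V \text{ finite connected}} \mathbb{P}_{p,\mathscr{G}}\bigl(\mathcal{C}_o=V\bigr),
\]
each summand is a polynomial in $p$. The aim is to exhibit a connected open neighbourhood $U$ of $[p_0,1]$ in $\mathbb{C}$, depending only on $(\mathscr{G},p_0)$, on which this series (or its reorganisation by minimal edge-cutsets) converges normally, uniformly in $\mathscr{H}$ lying in some neighbourhood of $\mathscr{G}$ in $\mathfrak{G}$, and with the subseries indexed by sets $V$ of diameter at least $r$ bounded by $Ce^{-cr}$.

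The key inputs for this uniform convergence are twofold. First, reorganising the expansion according to the minimal edge-cutset $\partial_E V$ around $o$, the new connectivity results on minimal cutsets advertised in the abstract (answering the Babson--Benjamini question) should yield a uniform exponential bound on the number of minimal cutsets of size $n$ surrounding $o$, valid on a neighbourhood of $\mathscr{G}$ in $\mathfrak{G}$ and depending only on local geometry. Second, the uniform Kesten--Zhang-type bound of Theorem~\ref{thm:cluster size} controls, uniformly in $\mathscr{H}$ near $\mathscr{G}$ and $p\in [p_0,1]$, the probability that the cluster of $o$ is finite but has diameter at least $r$; in turn, this rests on Theorem~\ref{thm:sharpness-local}. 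Combining exponential cutset counting with the exponential tail bound, a routine geometric series estimate upgrades convergence from the real segment to a fixed complex strip $U$ around it, uniformly in $\mathscr{H}$.

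Granted such a uniformly convergent expansion, the local comparison between $\mathscr{G}$ and $\mathscr{H}$ is essentially bookkeeping. If $R(\mathscr{G},\mathscr{H})\geq k$, then every finite connected set $V\ni o$ with $V\subseteq B_\mathscr{G}(o,\lfloor k/2\rfloor)$ and its edge-boundary can be identified, isomorphically, with its counterpart in $\mathscr{H}$, so the corresponding polynomial contributions to $f_\mathscr{G}$ and $f_\mathscr{H}$ cancel term by term. The remaining terms, indexed by $V$ of diameter exceeding $k/2$, are bounded by the tail estimate, yielding
\[
\|f_\mathscr{H}-f_\mathscr{G}\|_{\infty,U}\;\leq\; C e^{-ck/2}.
\]
After redefining $c$, this is the stated bound. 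Uniqueness of the holomorphic extensions $f_\mathscr{G}$ and $f_\mathscr{H}$ on the connected open set $U$ follows from the identity principle, and the ``$k$ large enough'' requirement simply encodes that $\mathscr{H}$ must be close enough to $\mathscr{G}$ for Theorems~\ref{thm:sharpness-local} and \ref{thm:cluster size} to apply.

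The hard part is the uniform convergence step. Establishing that the cluster expansion admits a holomorphic extension to a genuine complex neighbourhood of $[p_0,1]$ with every constant depending only on $(\mathscr{G},p_0)$ is already the non-trivial core of Theorem~\ref{thm:analyticity-poly}; here one must additionally ensure that every estimate is expressed in terms of purely local data, i.e.\ in terms of a ball $B_\mathscr{G}(o,R)$ for some $R=R(\mathscr{G},p_0)$. This is precisely where the paper's new combinatorial results on minimal cutsets, together with the finitary versions of Trofimov's structure theorem, must enter the argument.
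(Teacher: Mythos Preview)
The proposal has a structural gap. The naive expansion
\[
1-\theta_\mathscr{G}(p) \;=\; \sum_{V\ni o} \mathbb{P}_{p,\mathscr{G}}(\mathcal{C}_o=V)
\]
does not extend to a complex neighbourhood of $[p_0,1]$ in the way you need. The term $\mathbb{P}_p(\mathcal{C}_o=V)$ is a polynomial whose degree is the number of edges with at least one endpoint in $V$, hence of order $|B_{\mathrm{diam}(V)}|$, i.e.\ \emph{volume}-like in the diameter; the available tail bound from sharpness is exponential only in the \emph{diameter}. Perturbing $p$ to a complex $z$ costs a factor exponential in the degree, so summing over $V$ with $\mathrm{diam}(V)=r$ gives at best $e^{O(r^d)}e^{-cr}$, which diverges for $d\ge 2$. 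Reorganising by the minimal edge-cutset $\partial_E V$ does not, on its own, repair this: the event $\{\partial_E \mathcal{C}_o=\Pi\}$ still depends on all edges enclosed by $\Pi$, so the polynomial degree remains volume-like, and simply summing $(1-p)^{|\Pi|}$ over closed cutsets gives only the Peierls \emph{upper bound}, not an identity for $\theta$. The paper resolves this by introducing \emph{interfaces} at a renormalisation scale $N$ (Definition~\ref{defi:interface}), engineered so that (i) the event ``$\mathscr{I}$ occurs'' is measurable with respect to $O(|\mathscr{I}|)$ edges, (ii) occurring interfaces are pairwise disjoint (Lemma~\ref{lem: disjoint}), and (iii) via Lemma~\ref{lem:cutset} one gets an \emph{exact} identity $1-\theta(p)=\mathbb{P}_p(D_N)+\mathbb{P}_p(D_N^c,\text{ an interface occurs})$. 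Inclusion--exclusion over multi-interfaces then produces a series whose $n$-th block has polynomial degree $O(n)$ \emph{and} total mass $\le Ce^{-cn}$ uniformly in $\mathscr{H}$ near $\mathscr{G}$ (Proposition~\ref{prop: exp dec}); this degree/decay matching is precisely what allows the complex extension.

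There is also a dependency problem. You invoke Theorems~\ref{thm:sharpness-local} and~\ref{thm:cluster size} as inputs, but in the paper these are established in Section~\ref{sec:strong-proba-estimates} \emph{using} the interface estimate Proposition~\ref{prop: exp dec}, and the proof of Theorem~\ref{thm:sharpness-local} even quotes Theorem~\ref{thm:analyticity-local} to obtain $p_c(\mathscr{H})<p_0$. As written, your argument is circular. The cutset-connectivity result Theorem~\ref{thm:finitary-timar} does enter, but one level deeper than you suggest: it supplies a uniform constant $t$ with $\weak(\mathscr{H})\le t$ for all $\mathscr{H}$ near $\mathscr{G}$, ensuring that interfaces are uniformly $t$-connected and thereby making the counting in Proposition~\ref{prop: exp dec} uniform. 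Your final bookkeeping step---matching contributions of multi-interfaces of size $\lesssim k$ between $\mathscr{G}$ and $\mathscr{H}$ and bounding the tail by $Ce^{-ck}$---is correct in spirit, and is exactly how the paper concludes once the right expansion is in place.
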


As a corollary, we obtain the following locality result for all derivatives of $\theta$.

\begin{corollary}\label{coro:kdiff}
    Let $\mathscr{G}_n\xrightarrow[n\to\infty]{}\mathscr{G}_\infty$ be a converging sequence of elements of $\mathfrak G$.
    Then, for every $k\ge 0$, the function $\theta$ is $\mathcal{C}^k$ and $\theta^{(k)}_{\mathscr{G}_n}$ converges to $\theta^{(k)}_{\mathscr{G}_\infty}$ uniformly on any compact subset of $(p_c(\mathscr{G}_\infty),1]$.
\end{corollary}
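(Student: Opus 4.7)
The plan is to reduce the corollary to Theorem~\ref{thm:analyticity-local} combined with the classical fact that uniform convergence of a sequence of holomorphic functions on an open set forces uniform convergence of every derivative on compact subsets (Weierstrass / Cauchy's integral formula).

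Given a compact $K\subset(p_c(\mathscr{G}_\infty),1]$, I would pick $p_0\in(p_c(\mathscr{G}_\infty),\min K)$ and apply Theorem~\ref{thm:analyticity-local} to the pair $(\mathscr{G}_\infty,p_0)$. This provides constants $c,C>0$, a threshold $k_0$, and a connected open neighbourhood $U\subset\mathbb{C}$ of $[p_0,1]$ with the following property: whenever $R(\mathscr{G}_\infty,\mathscr{H})\geq k\geq k_0$, both $\theta_\mathscr{H}$ and $\theta_{\mathscr{G}_\infty}$ admit unique holomorphic extensions $f_\mathscr{H},f_{\mathscr{G}_\infty}\colon U\to\mathbb{C}$ agreeing with them on $[p_0,1]$, and $\|f_\mathscr{H}-f_{\mathscr{G}_\infty}\|_\infty\leq Ce^{-ck}$. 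Specialising to $\mathscr{H}=\mathscr{G}_n$ and using that $R_n:=R(\mathscr{G}_n,\mathscr{G}_\infty)\to\infty$ by hypothesis, I would obtain that, for $n$ large enough, $f_{\mathscr{G}_n}$ exists and $f_{\mathscr{G}_n}\to f_{\mathscr{G}_\infty}$ uniformly on $U$, at the exponential rate $Ce^{-cR_n}$.

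Next I would invoke the Weierstrass theorem: since $(f_{\mathscr{G}_n})$ converges uniformly to $f_{\mathscr{G}_\infty}$ on the open set $U$, the same holds for every complex derivative $f_{\mathscr{G}_n}^{(k)}$ on any compact subset of $U$. Restricting to $K\subset[p_0,1]\subset U$, and noting that $f_{\mathscr{G}_n}=\theta_{\mathscr{G}_n}$ on the real interval $[p_0,1]$ (so that the real and complex derivatives coincide there), I would conclude $\theta_{\mathscr{G}_n}^{(k)}\to\theta_{\mathscr{G}_\infty}^{(k)}$ uniformly on $K$. The $\mathcal{C}^k$ regularity of each $\theta_\mathscr{G}$ on $(p_c(\mathscr{G}),1]$ is immediate from Theorem~\ref{thm:analyticity-poly}.

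I do not expect any real obstacle; all the substantive work has been absorbed into Theorem~\ref{thm:analyticity-local}. The only minor point to verify is that $p_c(\mathscr{G}_n)<p_0$ for $n$ large enough, so that Theorem~\ref{thm:analyticity-poly} applies to $\mathscr{G}_n$ in a neighbourhood of $[p_0,1]$ and $f_{\mathscr{G}_n}$ is genuinely a holomorphic extension of the real analytic function $\theta_{\mathscr{G}_n}|_{[p_0,1]}$. This is guaranteed by the locality of $p_c$ on $\mathfrak{G}$ established in \cite{easo-hutchcroft-locality}.
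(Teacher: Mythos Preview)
Your proposal is correct and is exactly the intended derivation: the paper states this result as an immediate corollary of Theorem~\ref{thm:analyticity-local} without a separate proof, and the standard Weierstrass argument you outline is the implicit justification. Your final detour through \cite{easo-hutchcroft-locality} is unnecessary, since $p_c(\mathscr{G}_n)<p_0$ already follows from Theorem~\ref{thm:analyticity-local} itself (as $\theta_{\mathscr{G}_n}(p_0)=f_{\mathscr{G}_n}(p_0)\to\theta_{\mathscr{G}_\infty}(p_0)>0$), or alternatively from Theorem~\ref{thm:sharpness-local}.
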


\begin{remark}
    A proof of Corollary~\ref{coro:kdiff} for $k=0$ was sketched by Easo and Hutchcroft in the parallel work \cite{easo-hutchcroft-locality} for all converging sequences of transitive graphs satisfying $p_c<1$. The corollary generalises this result to all values of $k$ in the case of transitive graphs of polynomial growth. Let us mention that even the differentiability of $\theta_\mathscr{G}$ for all $\mathscr{G}\in\mathfrak{G}$ is a new result.
\end{remark}

\begin{remark}
    Continuity of the map $p_c:\mathfrak{G}\to [0,1]$ was originally derived in \cite{cmtlocality} by using the main results of \cite{supercritpoly}.
    Our proof of Theorem~\ref{thm:analyticity-local} provides an alternative way to perform the derivation. Indeed, pointwise convergence of $\theta_{\mathscr{G}_n}$ to $\theta_{\mathscr{G}_{\infty}}$ on $(p_c(\mathscr{G}_\infty),1]$ implies upper semicontinuity of $p_c:\mathfrak{G}\to [0,1]$. Lower semicontinuity is an older result known to hold for all transitive graphs; see \cite[Section~14.2]{pete2017probability} or \cite[Section~1.2]{duminil2016new}.
\end{remark}

To establish Theorem~\ref{thm:analyticity-poly}, we broadly follow the main strategy of~\cite{analyticity}. 
We introduce the notion of an \emph{interface}, a novel form of boundary that satisfies several desirable probabilistic and combinatorial properties --- see Definition~\ref{defi:interface} and Lemma~\ref{lem:cutset}. 
In~\cite{analyticity}, interfaces were formally defined on a different graph, namely the renormalised lattice $N\mathbb{Z}^d$. 
Some care is required when handling interfaces in the more general context of transitive graphs of polynomial growth, as a naive extension of the definition from $\mathbb{Z}^d$ can lead to several difficulties. 
Our approach instead defines interfaces directly on the original graph, inspired by the technique of coarse-graining while remaining within the same graph from~\cite[Section~10]{supercritpoly}. 

The definition of interfaces is specifically designed to yield an \emph{exact}\footnote{Peierls' argument provides an upper bound for~$1-\theta$. 
Our method can be viewed as a refinement of Peierls' argument that yields an exact equality instead, which is crucial for proving analyticity.} 
expression for $\theta$ as a series of polynomials over collections of interfaces, called \emph{multi-interfaces}, using the inclusion--exclusion principle. 
The main challenge then lies in establishing uniform convergence of this series in some suitable domain in $\mathbb{C}$. 
As a key ingredient, we use the fact that, by the non-overlapping property of interfaces, we may restrict to multi-interfaces consisting of disjoint interfaces. 
For the convergence, a crucial input is the local uniqueness result from~\cite{supercritpoly}, namely their Proposition~1.3. 

The additional ingredient needed to pass from Theorem~\ref{thm:cmt-sharpness} to Theorem~\ref{thm:sharpness-local}, 
and from Theorem~\ref{thm:analyticity-poly} to Theorem~\ref{thm:analyticity-local}, 
is Theorem~\ref{thm:finitary-timar} below. 
To state it properly, we first provide some definitions and context.
In the planar setting, Peierls' argument exploits the fact that finite clusters are enclosed by circuits in the dual graph. To extend the argument beyond the planar case, circuits are replaced by minimal cutsets.

Let $\mathscr{G}$ be a graph. Let $u, v \in V(\mathscr{G})$ and $\Pi \subset V(\mathscr{G})$. We say that $\Pi$ is a \defini{cutset between $u$ and $v$} if any path from $u$ to $v$ has to pass through at least one vertex of $\Pi$. We say that $\Pi$ is a \defini{minimal cutset between $u$ and $v$} if it is minimal for inclusion among all cutsets between $u$ and $v$. Likewise, by taking $\Pi\subset E(\mathscr{G})$, we can define the notions of \defini{bond-cutsets} and \defini{minimal bond-cutsets}.
For $k\geq 1$ and $\Pi\subset V(\mathscr{G})$, we say that $\Pi$ is \defini{$k$-coarse connected} if it is connected as a subset of $\left(V,E_{ k}\right)$, where
\[
\{u,v\}\in E_{ k}\quad \iff \quad 1\le d_{\mathscr{G}}(u,v)\le k.
\]
It follows from works of Babson--Benjamini and Tim\'ar \cite{babson1999cut, timarcutset} that for every $\mathscr{G}\in\mathfrak{G}$, there is a finite constant $c$ such that for all vertices $u$ and $v$, all minimal cutsets between $u$ and $v$ are $c$-coarse connected; see \cite[Lemma~2.1]{supercritpoly}. Our next theorem is a local version of this result for finite minimal cutsets.

\begin{theorem}
    \label{thm:finitary-timar}
    For every $\mathscr{G}\in\mathfrak{G}$, there is some $c<\infty$ such that the set of all $\mathscr{H}\in\mathfrak{G}$ satisfying the following condition is a neighbourhood of $\mathscr{G}$: for any two vertices $u,v\in V(\mathscr{H})$, every finite minimal cutset from $u$ to $v$ is $c$-coarse connected.\hfill\hyperlink{target:timar}{\footnotesize\color{mycolor}$\downarrow$ proof}
\end{theorem}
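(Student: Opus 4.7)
I would prove Theorem~\ref{thm:finitary-timar} by localising the Babson--Benjamini--Tim\'ar argument recalled in \cite[Lemma~2.1]{supercritpoly}, which provides, for any $\mathscr{G}\in\mathfrak{G}$, a constant $c_0=c_0(\mathscr{G})$ such that every minimal cutset in $\mathscr{G}$ is $c_0$-connected. The goal is to show that the same $c_0$ (possibly inflated by a universal factor) works uniformly on a neighbourhood of $\mathscr{G}$ in $\mathfrak{G}$, at least when restricted to finite cutsets.

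My preferred route is to re-examine the Tim\'ar-type proof and isolate its local content. The argument proceeds by contradiction: if a minimal cutset $\Pi$ between $u$ and $v$ admits a partition $\Pi=A\sqcup B$ with $d(A,B)>c_0$, one performs a local surgery inside a ball of bounded radius $r_0=r_0(\mathscr{G},c_0)$ around a carefully chosen vertex of $A$ (or $B$) to produce a proper subset $\Pi'\subsetneq\Pi$ that is still a cutset between $u$ and $v$, contradicting minimality. The surgery depends only on the combinatorial structure of the ball of radius $r_0$ together with the ``one-sided'' fact that all nearby vertices of $\Pi$ lie in $A$ rather than $B$. Once this locality is made precise, the argument transfers directly: if $\mathscr{H}$ agrees with $\mathscr{G}$ on every ball of radius $r_0$ (which is automatic by transitivity of $\mathscr{H}$ once $R(\mathscr{G},\mathscr{H})\ge k$ for some $k=k(\mathscr{G},c_0)$), then the same surgery applies to any finite minimal cutset in $\mathscr{H}$ that fails $c_0$-connectedness, yielding the analogous contradiction.

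The main obstacle is the locality claim itself: Tim\'ar's surgery is carried out in a bounded ball but its \emph{consequence} --- that the modified set is still a cutset between $u$ and $v$ --- is a global statement requiring that no long-range paths open up in $\mathscr{H}\setminus\Pi'$. Verifying this calls for a careful reading of the Tim\'ar argument to ensure the modification only reroutes $\Pi$ through vertices within the ball, while the rest of $\Pi$ continues to block all paths from $u$ to $v$; finiteness of the cutset is used here to tame the $B$-side during the surgery and to guarantee that only finitely many complementary components of $\mathscr{H}\setminus\Pi$ need to be tracked. Should this direct route fail, the backup is a compactness argument: from a hypothetical counterexample sequence $\mathscr{H}_n\to\mathscr{G}$ with finite minimal cutsets $\Pi_n=A_n\sqcup B_n$ satisfying $d_{\mathscr{H}_n}(A_n,B_n)\to\infty$, root $\mathscr{H}_n$ at a vertex $a_n\in A_n$ minimising the distance to $B_n$ (free by transitivity), use the identifications $B_{\mathscr{H}_n}(a_n,R)\simeq B_\mathscr{G}(R)$ to diagonally extract a limiting marked set $\Pi^\infty\subset V(\mathscr{G})$, and verify that the local reformulation of minimality (each $x\in\Pi_n$ has neighbours in both complementary components, with a path from $u_n$ to $v_n$ through $x$ avoiding $\Pi_n\setminus\{x\}$) survives the extraction to make $\Pi^\infty$ a genuine minimal cutset of $\mathscr{G}$ --- possibly between a vertex and an end, or between two ends --- whose failure of $c_0$-connectedness contradicts the non-finitary theorem. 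Either route amounts to pinpointing the local content of the original Babson--Benjamini--Tim\'ar proof.
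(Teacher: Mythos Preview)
Your proposal differs fundamentally from the paper's proof, and both suggested routes have genuine gaps.

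The paper does \emph{not} localise the Babson--Benjamini--Tim\'ar surgery. Instead it detours through structure theory: by the finitary Trofimov theorem of Tessera--Tointon, every $\mathscr{H}$ close to $\mathscr{G}$ is uniformly quasi-isometric to a Cayley graph of a nilpotent group of bounded rank and step, hence to a quotient of one fixed nilpotent Cayley graph $\mathscr{U}$. Two separate results then show that $\weak_E$ of a one-ended quotient is bounded by $\pei_E$ of the cover (via a fibration argument, Proposition~\ref{prop:mon} and Corollary~\ref{coro:mon}), and that $\weak$ is quasi-isometry invariant with controlled constants (Proposition~\ref{prop:qi}). Combining these with $\pei_E(\mathscr{U})<\infty$ gives the uniform bound.

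Your first route fails for a reason the paper explicitly flags. The Tim\'ar surgery rests on the cycle space of $\mathscr{H}$ being generated by cycles of length at most some $L(\mathscr{H})$; the radius $r_0$ you invoke depends on $L(\mathscr{H})$, not on $L(\mathscr{G})$. But $L$ is not a local invariant: for the slabs $\mathscr{H}_n=\mathbb{Z}^2\times\mathbb{Z}/n\mathbb{Z}$ converging to $\mathbb{Z}^3$, an essential cycle of length $n$ is needed to generate the cycle space, so $L(\mathscr{H}_n)\to\infty$ while $R(\mathscr{H}_n,\mathbb{Z}^3)\to\infty$. Hence there is no $k=k(\mathscr{G},c_0)$ guaranteeing that balls of radius $r_0(\mathscr{H})$ in $\mathscr{H}$ match those of $\mathscr{G}$: the radius you need grows with $\mathscr{H}$.

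Your compactness route is also incomplete. The unjustified step is that the diagonal limit $\Pi^\infty$ is a minimal cutset between anything at all. Being a cutset is global: near $a_n$ the complement of $\Pi_n$ has two local pieces, but in $\mathscr{G}$ those pieces may merge via a path escaping to infinity --- precisely the role that $B_n$ (which has drifted off) played in $\mathscr{H}_n$. Minimality is equally global: the witnessing path through $x$ avoiding $\Pi_n\setminus\{x\}$ may have unbounded length and need not survive extraction. You have not explained how finiteness of $\Pi_n$ resolves either issue, and the paper's need to distinguish $\weak$ from $\pei$ throughout Section~\ref{sec:connecticut} (see Remark~\ref{rem:connecticut} and the remark following Proposition~\ref{prop:mon}) signals that this is exactly where the subtlety lies.
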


\noindent Along the way of proving Theorem~\ref{thm:finitary-timar}, we establish results on the connectivity of minimal cutsets that we believe are of independent interest and, in particular, we answer Question~2 from \cite{babson1999cut}; see Section~\ref{sec:qi}. 

Let us also remark that the finiteness of the constant $c$ in the above theorem, for a fixed graph $\mathscr{G} \in \mathfrak{G}$, follows from the fact that the cycle space of $\mathscr{G}$ is generated by cycles of bounded length. However, it is possible that some basic cycles in $\mathscr{H}$ are much longer than any basic cycle in $\mathscr{G}$, regardless of how close $\mathscr{G}$ and $\mathscr{H}$ are. Our approach avoids such pathologies by comparing the connectivity of minimal cutsets in $\mathscr{H}$ and $\mathscr{G}$ directly.

\begin{remark}
It is a standard fact that, on connected graphs of bounded degree, coarse connectivity of cutsets implies an exponential upper bound on the number of cutsets of a given size that separate a fixed origin from infinity. Our Theorem~\ref{thm:finitary-timar} implies that this exponential upper bound is local for graphs in $\mathfrak{G}$. The latter can also be deduced from the results of \cite{EST24}, namely from the fact that $\theta_{\mathscr{G}}(1-\varepsilon)\geq \varepsilon$ for some uniform $\varepsilon>0$, which follows from the proof of the so-called gap at $1$ for $p_c$, and their Theorem $1$. We remark that when combined with the results of \cite{supercritpoly}, this is sufficient to prove Theorem~\ref{thm:sharpness-local}. However, for the proof of the analyticity of $\theta$, it is crucial that the coarse connectivity of cutsets is itself local.
\end{remark}

Let us now come back to percolation and end this introduction by focusing once again on the properties of the percolation function $\theta$. One can formulate a family of interesting problems in percolation theory by investigating the regularity of $\theta$. Depending on the meaning one gives to the word ``regularity'', and depending on the graphs one considers, this can give rise to diverse mathematical problems. When one fixes a graph $\mathscr{G}$, the continuity of $\theta$ at $p_c(\mathscr{G})$ is equivalent to the absence of percolation at criticality, i.e.\ whether $\theta(p_c)=0$, a famous open problem which has been studied extensively in the case of the hypercubic lattice $\mathbb{Z}^d$. Continuity at $p_c$ is known in dimension $d=2$ \cite{KestenCritical} and in dimensions $d\geq 11$ \cite{HS90, FH17}, but remains open in all intermediate dimensions, with the case of dimension $d=3$ being notoriously difficult. See \cite{BLPS99, Timar06, PPS06, slabs, H16, HH21} for related works beyond the context of $\mathbb{Z}^d$. On the other hand, showing continuity of $\theta$ on $[0,1]\setminus \{p_c\}$ is much more tractable; see \cite{schonmann1999stability} for a proof holding for all transitive graphs.

In another direction, one can study the regularity of the map $\mathscr{G}\longmapsto \theta_{\mathscr{G}}$, where we now view $\theta$ as a function defined on the whole interval $[0,1]$. This turns out to be related to the aforementioned problem of whether $\theta(p_c)=0$. To see this, let us introduce some notation. Given two graphs $\mathscr{G}_1$ and $\mathscr{G}_2$, we set their product to have vertex-set $V(\mathscr{G}_1)\times V(\mathscr{G}_2)$ and declare $(v_1,v_2)$ to be adjacent to $(v'_1,v'_2)$ if and only if there is some $i$ such that $v_i$ is adjacent to $v'_i$ and, for the other value $j=2-i$, we have $v_j=v'_j$. We say that a (non-empty connected) graph is a cycle-graph if all its vertices have degree 2: these graphs are either a cycle of finite length at least 3, or a bi-infinite line. \label{page:ab}Let $\Ab$ be the set of all isomorphism classes of graphs that are the product of finitely many cycle-graphs, at least two of which are infinite. In particular, graphs in $\Ab$ contain a copy of the lattice $\mathbb{Z}^d$ for some $d\geq 2$. We endow $\Ab$ with the induced topology it inherits as a subset of $\mathfrak{G}$.

We are now able to state the following proposition, which is in the same spirit as \cite[Proposition~3]{slabs}. Equivalence of the first two items is classical but we restate it for emphasis.

\begin{proposition}
\label{prop:equiv}
    The following statements are equivalent:
    \begin{enumerate}
        \item\label{item:1} every graph in $\Ab$ satisfies $\theta(p_c)=0$,
        \item\label{item:2} for every $\mathscr{G}\in \Ab$, the map $p\longmapsto \theta_\mathscr{G}(p)$ is continuous from $[0,1]$ to $[0,1]$,
        \item\label{item:3} for every $p\in[0,1]$, the map $\mathscr{G}\longmapsto \theta_\mathscr{G}(p)$ is continuous from $\Ab$ to $[0,1]$,
        \item\label{item:4} the map $(\mathscr{G},p)\longmapsto \theta_\mathscr{G}(p)$ is continuous from $\Ab\times [0,1]$ to $[0,1]$,
        \item\label{item:5} the map $\mathscr{G}\longmapsto \theta_\mathscr{G}$ is continuous from $\Ab$ to $\mathcal{L}^\infty([0,1],\mathbb{R})$,
        \item\label{item:6} the map $\mathscr{G}\longmapsto \theta_\mathscr{G}$ is well defined and continuous from $\Ab$ to $\mathcal{C}([0,1],\mathbb{R})$ endowed with the uniform topology.~\hfill\hyperlink{target:equiv}{\footnotesize\color{mycolor}$\downarrow$ context and proof}
    \end{enumerate}
\end{proposition}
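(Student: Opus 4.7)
The equivalence (1)$\Leftrightarrow$(2) is classical and would be restated for completeness: $\theta_\mathscr{G}$ is always monotone and right-continuous on $[0,1]$ (as a decreasing limit of polynomials $p\mapsto\mathbb{P}_{p,\mathscr{G}}(o\longleftrightarrow \partial B(n))$) and vanishes on $[0,p_c(\mathscr{G}))$, so continuity on $[0,1]$ reduces to left-continuity at $p_c(\mathscr{G})$, equivalently to $\theta_\mathscr{G}(p_c)=0$. The implications (6)$\Rightarrow$(5), (6)$\Rightarrow$(4)$\Rightarrow$(3),(2) are routine: $\|\cdot\|_{L^\infty}\le\|\cdot\|_{\mathcal C}$, the evaluation map $\mathcal{C}([0,1])\times[0,1]\to\mathbb{R}$ is jointly continuous, and joint continuity trivially restricts to separate continuity in each argument.

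The central new implication is (2)$\Rightarrow$(6). Take $\mathscr{G}_n\to\mathscr{G}$ in $\Ab\subset\mathfrak{G}$. My plan combines three ingredients: (i) the locality of $p_c$ on $\mathfrak{G}$ (Schramm's conjecture, now a theorem), which gives $p_c(\mathscr{G}_n)\to p_c(\mathscr{G})$; (ii) Corollary~\ref{coro:kdiff} at $k=0$, which gives uniform convergence of $\theta_{\mathscr{G}_n}$ to $\theta_\mathscr{G}$ on every compact subset of $(p_c(\mathscr{G}),1]$; and (iii) the continuity of $\theta_\mathscr{G}$ at $p_c(\mathscr{G})$ assumed in (2). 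Fix $\varepsilon>0$ and, using (iii), choose $\delta>0$ with $\theta_\mathscr{G}(p_c(\mathscr{G})+\delta)<\varepsilon$. Ingredient (ii) yields $|\theta_{\mathscr{G}_n}-\theta_\mathscr{G}|<\varepsilon$ on $[p_c(\mathscr{G})+\delta,1]$ for $n$ large, while on $[0,p_c(\mathscr{G})+\delta]$ monotonicity forces $\theta_{\mathscr{G}_n}(p)\le\theta_{\mathscr{G}_n}(p_c(\mathscr{G})+\delta)<\theta_\mathscr{G}(p_c(\mathscr{G})+\delta)+\varepsilon<2\varepsilon$ and similarly $\theta_\mathscr{G}(p)<\varepsilon$, so the uniform difference on $[0,1]$ tends to zero. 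Since (2) places $\theta_\mathscr{G}$ in $\mathcal{C}([0,1])$, this upgrades to (6).

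To close the loop, it suffices to establish (3)$\Rightarrow$(1). Given $\mathscr{G}\simeq\mathbb{Z}^d\times F\in\Ab$ with $d\ge 2$ and $F$ a finite product of finite cycles, the strategy is to exhibit $\mathscr{G}_n\in\Ab$ with $\mathscr{G}_n\to\mathscr{G}$ and $p_c(\mathscr{G}_n)>p_c(\mathscr{G})$: then $\theta_{\mathscr{G}_n}(p_c(\mathscr{G}))=0$ for every $n$ and the continuity postulated in (3) at $p=p_c(\mathscr{G})$ forces $\theta_\mathscr{G}(p_c(\mathscr{G}))=0$. When $d\ge 3$, I would take $\mathscr{G}_n:=\mathbb{Z}^{d-1}\times C_n\times F$, which still has two infinite cycle-graph factors and so lies in $\Ab$; as $\mathscr{G}$ properly covers $\mathscr{G}_n$, classical strict-monotonicity results for $p_c$ produce the required strict inequality. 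The case $d=2$ corresponds to isolated points of $\Ab$: for $F$ trivial it is Kesten's theorem, and for $F$ nontrivial one handles it by a slab-type comparison with the $d\ge 3$ graphs already treated, using $\mathbb{Z}^2\times F\hookrightarrow \mathbb{Z}^3\times F$.

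The main technical obstacle I anticipate is step (2)$\Rightarrow$(6), where the compact-uniform convergence provided by Corollary~\ref{coro:kdiff} must be upgraded to uniform convergence on the whole interval $[0,1]$; the delicate point is bridging the neighbourhood of $p_c(\mathscr{G})$, where the locality of $p_c$ alone is insufficient and the continuity hypothesis (2) must be fed in together with monotonicity of each $\theta_{\mathscr{G}_n}$.
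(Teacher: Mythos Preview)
Your argument is broadly on the right track and your $(2)\Rightarrow(6)$ step is essentially the paper's $(2)\Rightarrow(5)$, but the implication graph has two genuine gaps.

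First, item~(5) is never connected back to the cycle: you establish $(6)\Rightarrow(5)$ but no implication of the form $(5)\Rightarrow(\cdot)$, so you have not shown that (5) implies (1). This is easy to repair --- since every $\theta_{\mathscr H}$ is right-continuous, convergence in $\mathcal L^\infty$ upgrades to pointwise convergence everywhere, yielding $(5)\Rightarrow(3)$ --- but it must be said. The paper takes a different route, proving $(5)\Rightarrow(2)$ directly: it approximates a non-slab $\mathscr G=\mathbb Z^2\times\mathbb Z^k\times\mathscr H$ by the \emph{slabs} $\mathscr G_n=\mathbb Z^2\times(\mathbb Z/n\mathbb Z)^k\times\mathscr H$, invokes \cite{slabs} to know that each $\theta_{\mathscr G_n}$ is continuous, and concludes that the uniform limit $\theta_{\mathscr G}$ is continuous.

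Second, and more seriously, your handling of the $d=2$ case in $(3)\Rightarrow(1)$ does not work. You rightly note that $\mathbb Z^2\times F$ is isolated in $\Ab$, so (3) is vacuous there and gives no information. For nontrivial $F$ you then propose to deduce $\theta(p_c)=0$ from the $d\ge3$ case via the embedding $\mathbb Z^2\times F\hookrightarrow\mathbb Z^3\times F$; but this embedding only yields $\theta_{\mathbb Z^2\times F}\big(p_c(\mathbb Z^2\times F)\big)\le\theta_{\mathbb Z^3\times F}\big(p_c(\mathbb Z^2\times F)\big)$, and the right-hand side is strictly positive because $p_c(\mathbb Z^2\times F)>p_c(\mathbb Z^3\times F)$ puts you in the supercritical phase of the larger graph. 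No subgraph comparison of this kind can give the conclusion. The paper instead dispatches all slabs $\mathbb Z^2\times F$ at the outset by citing \cite{slabs}, which proves $\theta(p_c)=0$ for them unconditionally; this input is unavoidable.
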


Problems of regularity of the map $\mathscr{G}\longmapsto\theta_\mathscr{G}$ display diversity. Results such as Theorem~\ref{thm:analyticity-local} and Corollary~\ref{coro:kdiff} are true and proven. In the setup $\Ab$, continuity of $\mathscr{G}\longmapsto \theta_\mathscr{G}(p)$ holding for all $p$ is equivalent to a famous conjecture. And yet another statement is expected not to hold, namely uniform convergence of the derivative $\theta_{\mathscr{G}}'$ on $(p_c,1]$; see Section~\ref{sec:theta-near-pc} for precise statements. 

\paragraph{Structure of the paper} In Section~\ref{sec:connecticut}, we prove Theorem~\ref{thm:finitary-timar} on coarse connectedness of finite minimal cutsets. We use this result in Section~\ref{sec:analytic} to prove Theorem~\ref{thm:analyticity-local} and in Section~\ref{sec:strong-proba-estimates} to establish good probabilistic upper bounds in the supercritical regime, which include Theorem~\ref{thm:sharpness-local}. We prove Proposition~\ref{prop:equiv} in Section~\ref{sec:equiv}. At last, in Section~\ref{sec:theta-near-pc}, we make comments regarding $\theta_{\mathscr{G}}(p)$ and $\theta'_{\mathscr{G}}(p)$ when the graph $\mathscr{G}$ is variable and $p$ is very slightly supercritical. All sections can be read almost independently of each other.

\paragraph{Acknowledgements} We thank Vincent Tassion for an insightful conversation that inspired the content of Section~\ref{sec:theta-near-pc}. We are grateful to G\'abor Pete for suggesting to include a proof of local forms of exponential cluster repulsion and anchored-isoperimetric estimates. We thank Franco Severo for pointing out that \cite{EST24} gives a local bound on the number of minimal cutsets. At last, we are grateful to Nathan Deloire, Alexander Glazman, Tom Hutchcroft, \'Ad\'am Tim\'ar, and Matthew Tointon for helpful discussions. CP was supported by an EPSRC New Investigator Award (UKRI1019).

\hypertarget{target:timar}{
\section{Minimal cutsets are coarsely connected}}
\label{sec:connecticut}

This purely geometric section is dedicated to proving a result on the connectivity of cutsets, namely Theorem~\ref{thm:loc-bded}. This result is a crucial ingredient in the proof of the main theorems. It enables us, whenever we have a converging sequence of graphs in $\mathfrak G$, to obtain \emph{uniform} control on the finite cutsets of these graphs.

\subsection{Definitions and statement of Theorem~\ref{thm:loc-bded}}

We denote by $\pei(\mathscr{G})$ the infimum of all $k\geq 1$ such that, for every $u,v\in V$, every \defini{minimal} cutset between $u$ and $v$ is $k$-coarse connected. If no such $k$ exists, we set $\pei(\mathscr{G})=\infty$. This quantity has been studied in \cite{babson1999cut,timarcutset, timar2013boundary}, and it will play an important role in this paper. 
We will also need a variant of this quantity. We denote by $\weak(\mathscr{G})$ the infimum of all $k\geq 1$ such that, for every $u,v\in V$, every \defini{finite minimal} cutset between $u$ and $v$ is $k$-coarse connected. Again, if no such $k$ exists, we set $\weak(\mathscr{G})=\infty$. 

\begin{remark}
    \label{rem:connecticut}
    There exist graphs $\scr G$ such that both $\pei(\mathscr{G})$ and $\weak(\mathscr{G})$ are finite, yet their values differ. For example, consider the square lattice and let $\scr G$ be the graph obtained by removing a large ball around the origin. Then $\weak(\mathscr{G})=2$, as for the square lattice, while $\pei(\mathscr{G})$ is much larger. To see the latter, consider the vertices of the horizontal axis lying in $\mathscr{G}$, and note that they form an infinite minimal cutset between any vertex in the upper half-plane and any vertex in the bottom half-plane.
\end{remark}

The purpose of this section is to prove the following theorem.

\begin{theorem}[reformulation of Theorem~\ref{thm:finitary-timar}]
\label{thm:loc-bded}
The map $\weak$, defined on $\mathfrak G$, takes only finite values and is locally bounded.
In other words, for every $\mathscr{G}\in \mathfrak{G}$, there are finite constants $r=r(\mathscr{G})\geq 0$ and $t=t(\mathscr{G})\geq 0$ such that for every $\mathscr{H}\in\mathfrak{G}$, we have
\[
R(\mathscr{G},\mathscr{H})\ge r\implies \weak(\mathscr{H})\le t.
\]
\end{theorem}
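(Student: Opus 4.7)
The plan is to reduce the coarse connectivity of finite minimal cutsets in $\mathscr{H}$ to the corresponding property in $\mathscr{G}$, where $\pei(\mathscr{G})$ is finite by the classical results of Babson--Benjamini and Tim\'ar. The principal obstruction, signposted in the comments on Theorem~\ref{thm:finitary-timar} above, is that cycle-space arguments cannot be imported directly into $\mathscr{H}$: even for $\mathscr{H}$ arbitrarily close to $\mathscr{G}$, the basic cycles of $\mathscr{H}$ may be arbitrarily long. Hence the argument must work with \emph{local} witnesses that are stable under replacing $\mathscr{G}$ by a graph that is merely locally isomorphic to it.

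First I would set $c_0 := \pei(\mathscr{G})$ and scrutinise the proof of its finiteness in order to extract a localisable form of the connectivity. Concretely, the goal is to produce a radius $r_0 = r_0(\mathscr{G})$ such that, for any minimal cutset $\Pi$ in $\mathscr{G}$ and any two nearby vertices $\pi, \pi' \in \Pi$, the chain in $\Pi$ realising their $c_0$-connectivity can be read off from the ball of radius $r_0$ around $\pi$. Morally this step isolates a finite family of short cycles that suffice to slide within any cutset, and it is plausible that this local reformulation is closely related to the answer to Question~2 from \cite{babson1999cut} advertised in the introduction.

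Next I would transfer these witnesses to $\mathscr{H}$. Fixing $r := r_0 + c_0$ and any $\mathscr{H}$ with $R(\mathscr{G}, \mathscr{H}) \geq r$, the local isomorphism carries short cycles of $\mathscr{G}$ onto short cycles of $\mathscr{H}$, and vice versa. Given a finite minimal cutset $\Pi_\mathscr{H}$ in $\mathscr{H}$, one-endedness of $\mathscr{H}$ (a local property in $\mathfrak{G}$, since $\mathscr{H}$ inherits the polynomial growth degree of $\mathscr{G}$ via a finitary Trofimov-type statement, and transitive polynomial-growth graphs of degree at least two are one-ended) forces one side of $\mathscr{H} \setminus \Pi_\mathscr{H}$ to be finite; let $A$ be this side, so that $\Pi_\mathscr{H}$ is essentially the vertex boundary of $A$. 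Then, for any two vertices of $\Pi_\mathscr{H}$, I would build a chain of $c_0$-jumps inside $\Pi_\mathscr{H}$ by iterating the transported local certificates, with termination guaranteed by the finiteness of $A$. This yields $\weak(\mathscr{H}) \leq t$ for some explicit $t = t(\mathscr{G})$.

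The hard part will be the first step. The existing Babson--Benjamini--Tim\'ar arguments are phrased via global cycle-space generation, and rewriting them to see only bounded neighbourhoods of cutset vertices seems to require genuinely new combinatorial input. This is also where I expect the additional results of independent interest mentioned in the introduction to be obtained.
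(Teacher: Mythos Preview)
Your proposal has a genuine gap, and it is not where you flag it. The difficulty is not in localising the Babson--Benjamini--Tim\'ar argument inside $\mathscr{G}$ (step 1), but in the transfer (step 2). The property ``$\Pi$ is a minimal cutset'' is inherently global: knowing $\Pi\cap B_r(\pi)$ and the ambient ball $B_r(\pi)$ tells you nothing about whether those vertices sit in a minimal cutset of the whole graph. So even if you had, for every minimal cutset $\Pi$ in $\mathscr{G}$, a bounded-radius certificate for each $c_0$-step, the local isomorphism would not let you apply those certificates to $\Pi_{\mathscr{H}}$: the image of $\Pi_{\mathscr{H}}\cap B_r$ under the ball-isomorphism need not be part of any minimal cutset in $\mathscr{G}$, and hence is not covered by any of your certificates. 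The paper itself warns of this kind of obstruction when it notes that basic cycles of $\mathscr{H}$ can be arbitrarily long however close $\mathscr{H}$ is to $\mathscr{G}$; your scheme does not circumvent that. A secondary issue is that you assume $\Pi_{\mathscr{H}}$ is ``essentially the vertex boundary of $A$'', but removing a minimal \emph{vertex} cutset can create more than two components (the paper makes this distinction explicitly, and passes to bond-cutsets precisely to avoid it).

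The paper's proof takes a completely different route and never attempts to pull cutsets through the local isomorphism. Instead it invokes the finitary Tessera--Tointon structure theorem to get, uniformly along any convergent sequence in $\mathfrak{G}$, an $A$-quasi-isometry from $\mathscr{G}_n$ to a Cayley graph $\mathscr{H}'_n$ of a nilpotent group of bounded rank, depth and degree. All such $\mathscr{H}'_n$ are then realised as quotients of a single Cayley graph $\mathscr{U}$ of a free nilpotent group, where $\pei_E(\mathscr{U})<\infty$. Two auxiliary results do the work: a fibration lemma gives $\weak_E(\mathscr{H}'_n)\le \pei_E(\mathscr{U})$, and a quasi-isometry stability result gives $\weak(\mathscr{G}_n)\le C(A,\weak(\mathscr{H}'_n))$. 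It is this last quasi-isometry invariance of $\weak$ (with explicit constants) that answers Babson--Benjamini's Question~2, not a local rewriting of Tim\'ar's argument as you guessed.
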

We remark that the finiteness of $\weak$ on $\mathfrak G$ follows from the fact that the cycle space of these graphs is generated by cycles of bounded length. However, it is possible that some basic cycles in $\mathscr{H}$ are much longer than any basic cycle in $\mathscr{G}$, regardless of how close $\mathscr{G}$ and $\mathscr{H}$ are. For example, this occurs for the ``slab'' $\mathbb{Z}^2\times \mathbb{Z}/n$ and $\mathbb{Z}^3$. For this reason, our strategy relies on a more geometric approach, where we compare the values of  $\weak(\mathscr{H})$ and $\weak(\mathscr{G})$ directly.

A useful property to keep in mind, while working with the class $\mathfrak G$, is that every transitive graph of polynomial growth is quasi-isometric to a Cayley graph of some nilpotent group \cite{trofimov85polynomial, woess91}. The proof of Theorem~\ref{thm:loc-bded}
 crucially relies on a finitary refinement of this result, recently obtained by Tessera and Tointon  \cite{tesseratointon, tesseratointonnew}. This refinement allows us to reduce our problem to understanding the behaviour of $\weak$ for Cayley graphs of nilpotent groups, and how it is affected by quasi-isometries.

To handle nilpotent Cayley graphs, we use the fact that, given a sequence of Cayley graphs converging to a Cayley graph of a nilpotent group, all but finitely many graphs in the sequence are quotients of the limit graph. For this reason, Section~\ref{sec:quotient} begins with a proposition that controls $\weak(\mathscr{G})$ for one-ended graphs $\mathscr{G}$ which are quotients of a nice graph $\mathscr{H}$. We need a detour via bond-versions of our main quantities (see Section~\ref{sec:bond}) and, interestingly, the upper bound we get for $\weak_E(\mathscr{G})$ is $\pei_E(\mathscr{H})$, not $\weak_E(\mathscr{H})$. This is why both notions of ``cutconnectivity'' --- with and without a star --- need to be introduced, which constitutes the main subtlety of the present Section~\ref{sec:connecticut}. The behaviour of $\weak$ under quasi-isometries is then investigated in Section~\ref{sec:qi}. With these results at hand, we finally establish Theorem~\ref{thm:loc-bded} in Section~\ref{sec:loc-bded}.

\subsection{Cutsets and bond-cutsets}
\label{sec:bond}

As a tool to understand $\pei(\mathscr{G})$ and $\weak(\mathscr{G})$, we also introduce edge-versions of these quantities. We say that two distinct edges $e$ and $e'$ are $k$-adjacent if there is one endpoint of $e$ that lies at distance at most $k$ from an endpoint of $e'$. We can thus define $\pei_E(\mathscr{G})$ and $\weak_E(\mathscr{G})$ as above, by considering bond-cutsets instead of cutsets.

\begin{remark}
    A key-difference between cutsets and bond-cutsets goes as follows: as removing an edge cannot increase the number of connected components by more than one, minimal bond-cutsets always break the original graph into exactly two connected components. This property does not hold when cutsets are taken relative to vertices.
\end{remark}

In what follows, given a connected subgraph $D$ of $\mathscr{G}$, we write  $\partial_E D$ for the set of edges with one endpoint in $D$ and the other not in $D$.

\begin{lemma}
    \label{lem:bond}
    For every graph $\mathscr{G}$, we have $\weak(\mathscr{G})\le \weak_E(\mathscr{G})+2$.
\end{lemma}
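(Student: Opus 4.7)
The plan is a vertex-cutset-to-bond-cutset reduction. Let $\Pi$ be a finite minimal (vertex) cutset between two vertices $u$ and $v$ of $\mathscr{G}$; we may assume $u,v\notin \Pi$, since otherwise minimality of $\Pi$ forces it to be the singleton $\{u\}$ or $\{v\}$ and the conclusion is trivial. Write $D$ for the connected component of $u$ in $\mathscr{G}\setminus \Pi$. Local finiteness of $\mathscr{G}$ together with finiteness of $\Pi$ implies that $\partial_E D$ is finite; moreover every edge of $\partial_E D$ joins $D$ to $\Pi$, and $\partial_E D$ is a bond-cutset between $u$ and $v$ because every path from $u$ to $v$ must leave $D$. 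I would then extract a finite minimal bond-cutset $\Pi'\subseteq \partial_E D$ between $u$ and $v$.

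The crucial combinatorial input is the claim that every vertex $w\in \Pi$ is incident to at least one edge of $\Pi'$. I would prove it by contradiction. Suppose $w\in\Pi$ is incident to no edge of $\Pi'$. Minimality of $\Pi$ provides a path $P$ from $u$ to $v$ that avoids $\Pi\setminus\{w\}$; this path must pass through $w$ since $\Pi$ is a cutset. Because $P$ avoids $\Pi\setminus\{w\}$ and starts at $u\in D$, its initial segment lies entirely in $D$ up to its unique visit to $w$; symmetrically, its final segment lies in the component of $v$ in $\mathscr{G}\setminus \Pi$, which is distinct from $D$. Hence $P$ crosses $\partial_E D$ exactly once, through an edge incident to $w$; by assumption this edge is not in $\Pi'$, so $P$ avoids $\Pi'$, contradicting that $\Pi'$ is a bond-cutset.

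To conclude, set $k:=\weak_E(\mathscr{G})$, so that $\Pi'$ is $k$-edge-connected. Given $w,w'\in\Pi$, pick edges $e,e'\in\Pi'$ incident to $w$ and $w'$ respectively and connect them by a chain $e=e_0,\dots,e_n=e'$ in $\Pi'$ whose consecutive members are $k$-adjacent. Each $e_i\in \partial_E D$ has a unique endpoint $w_i\in \Pi$, with $w_0=w$ and $w_n=w'$, and $k$-adjacency of $e_i$ and $e_{i+1}$ combined with the triangle inequality (each $w_i$ lies at distance one from the other endpoint of $e_i$) gives $d_\mathscr{G}(w_i,w_{i+1})\le 1+k+1=k+2$, yielding a $(k+2)$-chain in $\Pi$ from $w$ to $w'$. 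The main subtlety I anticipate is precisely the claim above: without it, one would only recover $(k+2)$-connectedness of the $\Pi$-endpoints of $\Pi'$, which could a priori be a strict subset of $\Pi$; the rest of the argument is a routine triangle inequality once that structural fact is secured.
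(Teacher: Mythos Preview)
Your proof is correct and follows essentially the same route as the paper: pass from the vertex cutset $\Pi$ to the bond-cutset $\partial_E D$, invoke $\weak_E$, and transfer the $k$-edge-connectedness back to $\Pi$ via the triangle inequality. The only difference is that the paper asserts directly that $\partial_E D$ is itself a \emph{minimal} bond-cutset (so no extraction of $\Pi'$ is needed), whereas you pass to a minimal $\Pi'\subseteq\partial_E D$ and then prove the key claim that every $w\in\Pi$ is incident to an edge of $\Pi'$; your version is slightly more cautious but equivalent in spirit, and your justification of that claim is cleaner than what the paper sketches.
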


\begin{proof}
Let $\Pi$ be a finite cutset between two vertices $u$ and $v$. We will show that $\Pi$ is $(\weak_E(\mathscr{G})+2)$-coarse connected. If $\Pi=\{u\}$ or $\Pi=\{v\}$, then there is nothing to prove, so let us assume that $\Pi\neq \{u\},\{v\}$. 

Let $D$ be the connected component of $u$ in the complement of $\Pi$. Note that $\partial_E D$ is a finite bond-cutset that separates $u$ and $v$. Furthermore, the minimality of $\Pi$ implies that $\partial_E D$ is a minimal bond-cutset, and that each vertex in $\Pi$ is an endpoint of some edge in $\partial_E D$. Hence $\partial_E D$ is $\weak_E(\mathscr{G})$-coarse connected, and by the triangle inequality, $\Pi$ is $(\weak_E(\mathscr{G})+2)$-coarse connected, as desired.
\end{proof}
\begin{remark}
    We shall not need it but the inequality $\pei(\mathscr{G})\le \pei_E(\mathscr{G})+2$ holds as well, for the same reason.
\end{remark}

\subsection{Cutconnectivity and fibrations}
\label{sec:quotient}

\newcommand{\La}{\mathscr{L}}
\newcommand{\Sm}{\mathscr{S}}

Given two graphs $\Sm$ and $\La$, we say that a map $\pi:V(\La)\to V(\Sm)$ is \defini{1-Lipschitz} if, for all $x,y\in V(\La)$, we have $d_{\Sm}(\pi(x),\pi(y))\leq d_{\La}(x,y)$; this is equivalent to asking that for any two neighbouring vertices $x,y$ in $\La$, their images $\pi(x)$ and $\pi(y)$ either coincide or are adjacent. We call a map $\pi: V(\La) \to V(\Sm)$ a \defini{fibration} from $\La$ to $\Sm$ if it is 1-Lipschitz and satisfies that, for every vertex $x$ of $\La$ and every neighbour $v$ of $\pi(x)$, there exists a neighbour $y$ of $x$ such that $\pi(y) = v$.

\begin{remark}
    We have chosen the letters $\Sm$ and $\La$ to keep in mind which graph is Small and which one is Large. Notice that, as we have taken our graphs to be non-empty and connected, fibrations are automatically surjective. Indeed, consider a vertex $o$ in $\La$. For every vertex $x$ in $\Sm$, we can find an element in $\pi^{-1}(\{x\})$ by lifting a path from $\pi(o)$ to $x$.
\end{remark}

Recall that a graph is said to be \defini{one-ended} if, for every finite set of vertices $F$, its complement consists in exactly one infinite connected component and zero, one or several finite connected components.

\begin{proposition}
\label{prop:mon}
        Let $\pi:V(\La)\to V(\Sm)$ be a fibration. Assume that $\Sm$ is one-ended and that, for every finite subset $F$ of $V(\Sm)$, the infinite connected component $K$ of the complement of $F$ satisfies that the graph induced by $\La$ on $\pi^{-1}(K)$ is connected.
        
        Then, we have $\weak_E(\Sm)\le \pei_E(\La)$.
\end{proposition}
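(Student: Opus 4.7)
The plan is to start with a finite minimal bond-cutset $\Pi$ in $\Sm$ between two vertices $u,v$ and, through the fibration $\pi$, exhibit a corresponding minimal bond-cutset $\tilde\Pi_0$ in $\La$ whose projection equals $\Pi$; the desired inequality then follows by transporting $\pei_E(\La)$-connectedness along the $1$-Lipschitz map $\pi$. The first step is to analyse the two components of $\Sm\setminus\Pi$: calling them $A\ni u$ and $B\ni v$, minimality forces every edge of $\Pi$ to join $A$ and $B$, and one-endedness of $\Sm$ combined with finiteness of $\Pi$ forces exactly one of $A,B$ to be infinite. After swapping $u$ and $v$ if necessary, I may assume $A$ is finite, so the hypothesis of the proposition applied with $F:=A$ gives that $\La[\tilde B]$ is connected, where $\tilde A:=\pi^{-1}(A)$ and $\tilde B:=\pi^{-1}(B)$.

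Picking arbitrary lifts $\tilde u,\tilde v$ of $u,v$, let $\tilde C$ be the connected component of $\tilde u$ inside $\La[\tilde A]$ and set $\tilde\Pi_0:=\partial_E\tilde C$. Showing that $\tilde\Pi_0$ is a minimal bond-cutset between $\tilde u$ and $\tilde v$ amounts to showing that $V(\La)\setminus\tilde C=\tilde B\cup(\tilde A\setminus\tilde C)$ is connected. The summand $\tilde B$ is connected by hypothesis; every other component of $\La[\tilde A]$ attaches to $\tilde B$ by lifting, via the fibration property, a path in $\Sm$ that starts at the projection of one of its vertices, stays in $A$, and exits to $B$ across a single edge of $\Pi$.

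The crucial point is that $\pi(\tilde\Pi_0)=\Pi$. The inclusion $\subseteq$ uses that $\La$ has no edges joining distinct components of $\La[\tilde A]$, so any edge in $\partial_E\tilde C$ has its outer endpoint in $\tilde B$ and projects to an element of $\Pi$. For the reverse inclusion, given $e=\{a,b\}\in\Pi$ with $a\in A$, lifting an $A$-path from $u$ to $a$ starting at $\tilde u$ produces some $x'\in\pi^{-1}(a)\cap\tilde C$; a further fibration lift of the edge $\{a,b\}$ at $x'$ yields a neighbour $y'\in\tilde B$ of $x'$, and $\{x',y'\}\in\partial_E\tilde C$ projects onto $e$.

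To conclude, since $\tilde\Pi_0$ is a minimal bond-cutset in $\La$ it is $\pei_E(\La)$-connected, and the image under the $1$-Lipschitz map $\pi$ (with consecutive repetitions deleted) of any $\pei_E(\La)$-connected sequence in $\tilde\Pi_0$ remains $\pei_E(\La)$-connected in $\Sm$; combined with $\pi(\tilde\Pi_0)=\Pi$, this gives $\weak_E(\Sm)\le\pei_E(\La)$. The main obstacle is precisely this surjectivity $\pi(\tilde\Pi_0)=\Pi$: a careless choice of minimal bond-cutset in $\La$ (say, thinning out $\pi^{-1}(\Pi)$ by removing redundant edges) could project to a proper subset of $\Pi$, yielding $\pei_E(\La)$-connectedness of that subset rather than of all of $\Pi$.
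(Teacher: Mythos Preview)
Your proof is correct and follows essentially the same approach as the paper's. The only cosmetic difference is that you work with the induced subgraph $\La[\tilde A]$ and take $\tilde C$ to be the component of $\tilde u$ there, whereas the paper removes the edge-set $\pi^{-1}(\Pi)$ from $\La$ and takes the component $D$ of the chosen lift of $u$; since every edge of $\La$ between $\tilde A$ and $\tilde B$ projects into $\Pi$ and no edge inside $\tilde A$ or $\tilde B$ does, these two constructions yield the same set $\tilde C=D$ and the same $\tilde\Pi_0=\partial_E D$, and your minimality and surjectivity arguments match the paper's line by line.
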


\begin{proof}
    Let $\Pi$ be a finite minimal bond-cutset between two vertices $u$ and $v$ in $\Sm$. We wish to prove that $\Pi$ is $\pei_E(\La)$-coarse connected.  As $\Pi$ is a finite minimal bond-cutset and because $\Sm$ is one-ended, the graph $\Sm\setminus\Pi$ must consist of exactly two connected components: a finite one, with vertex-set $F$, and an infinite one,  with vertex-set $K$. This is where we use the fact that we work with bond-cutsets instead of cutsets. Notice that $\partial_E F=\Pi$ and $\partial_E K=\Pi$. As every edge of $\Pi$ has an endpoint in $F$ and the other in $K$, the graph structure induced by $\Sm$ and by $\Sm\setminus \Pi$ on $F$ are the same, and so is the case for $K$.

    Without loss of generality, we may assume that $u$ belongs to $F$ and that $v$ belongs to $K$. Since $\pi$ is surjective, we can pick $x$ in $\pi^{-1}(\{u\})$ and $y\in \pi^{-1}(\{v\})$. Write $\pi^{-1}(\Pi)$ for the set of all edges $\{z_1,z_2\}$ in $\La$ such that $\{\pi(z_1),\pi(z_2)\}$ is an edge that belongs to $\Pi$. As $\Pi$ is a bond-cutset between $u$ and $v$ and since $\pi$ is 1-Lipschitz, the set $\pi^{-1}(\Pi)$ is a bond-cutset between $x$ and $y$. Let $D$ denote the connected component of $x$ in $\La\setminus \pi^{-1}(\Pi)$, which cannot contain $y$. Now, let $\tilde \Pi:=\partial_E D$. This set may well be finite or infinite --- see Figure~\ref{fig:fibration-connecticut}. Note that $\tilde \Pi$ is a bond-cutset between $x$ and $y$. 
    Because of our assumption, we further know that $\pi^{-1}(K)$ is connected in $\La$, and since each edge of $\pi^{-1}(\Pi)$ has one endpoint not in $\pi^{-1}(K)$, $\pi^{-1}(K)$ is, in fact, connected in $\La\setminus \pi^{-1}(\Pi)$. Moreover, since $\tilde \Pi\subset \pi^{-1}(\Pi)$, every edge of $\tilde\Pi$ contains a vertex of $\pi^{-1}(K)$. Therefore, for every $e\in \tilde \Pi$, there is a path from an endpoint of $e$ to $y$ that avoids $\pi^{-1}(\Pi)$. By the definition of $D$ and $\tilde \Pi$, the same holds for $x$ instead of $y$. As a result, $\tilde \Pi$ is a \emph{minimal} bond-cutset between $x$ and $y$.

\begin{figure}
    \centering
    \includegraphics[width=14cm]{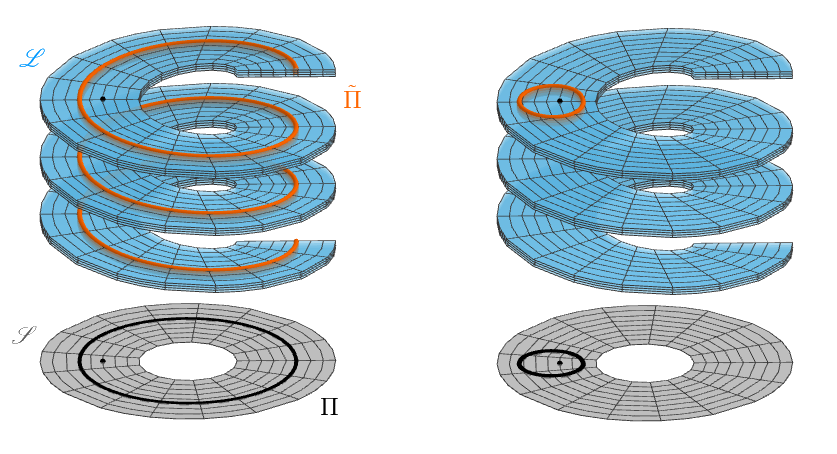}
    \caption{Illustration of $\tilde \Pi$.}
    \label{fig:fibration-connecticut}
\end{figure}

    Since $\tilde \Pi$ is a minimal bond-cutset between two vertices of $\La$, it is $\pei_E(\La)$-coarse connected. Moreover, as the map $\pi$ is 1-Lipschitz, the set $\pi(\tilde \Pi)$ must thus be $\pei_E(\La)$-coarse connected as well.\footnote{The set $\pi(\tilde \Pi)$ is indeed well defined as a set of edges because $\tilde\Pi\subset \pi^{-1}(\Pi)$.} By definition, we know that $\pi(\tilde \Pi)\subset \Pi$. In order to conclude, it thus suffices to prove the reverse inclusion.
    As $\Pi$ is a minimal bond-cutset between $u$ and $v$, for every $e\in \Pi$, there is a path from $u$ to an endpoint $w$ of $e$ that avoids $\Pi$. Using the fact that $\pi$ is a fibration, we can lift this path to get a path from $x$ to some $w'\in \pi^{-1}(\{w\})$ that avoids $\pi^{-1}(\Pi)$, i.e.\ $w'\in D$. Considering the other endpoint $z$ of $e$, and using again that $\pi$ is a fibration, we obtain that $w'$ has a neighbour $z'\in \pi^{-1}(z)$ which is not in $D$, i.e.\ $\{w',z'\}\in \tilde \Pi$. This means that $e\in \pi(\tilde \Pi)$, and establishes the inclusion $\Pi\subset \pi(\tilde\Pi)$, thus concluding the proof.
\end{proof}

\begin{remark}
    Proposition~\ref{prop:mon} is the reason why we need to introduce both $\pei_E$ and $\weak_E$. Indeed, its statement involves both quantities (for different graphs) and our argument does not directly imply\footnote{at least not without modifying our assumptions, which we need to be satisfied by converging sequences of Cayley graphs of one-ended nilpotent groups} that $\pei_E(\Sm)\le \pei_E(\La)$ or $\weak_E(\Sm)\le \weak_E(\La)$.
\end{remark}

\begin{remark}
    The assumption of connectedness of $\pi^{-1}(K)$ plays a crucial role in the proof of Proposition~\ref{prop:mon}. Indeed, if we try to connect a suitable endpoint $z$ of an arbitrary edge $e\in \tilde \Pi$ to $y$ outside $\pi^{-1}(\Pi)$ by using lifts as in the last stage of the proof, we obtain two paths but run into troubles when trying to connect them. We may obtain a path from $x$ to $z$ and then a path from $z$ to some well chosen $y'\in \pi^{-1}(\{v\})$, but there is absolutely no reason why $y'$ should coincide with $y$. Likewise, we may obtain a path from $x$ to $z\in\pi^{-1}(\{w\})$ and then a path from $y$ to some well chosen $z'\in \pi^{-1}(\{w\})$, but then there is no reason why $z'$ should coincide with $z$.
\end{remark}

The following lemma will be useful for verifying that the assumptions of Proposition~\ref{prop:mon} hold in our concrete situation of interest.

\begin{lemma}
\label{lem:mon}
Let $\pi:V(\La) \to V(\Sm)$ be a fibration. Assume that $\Sm$ is one-ended and that, for every finite subset $F$ of $V(\Sm)$, there is a vertex $u$ in $\Sm$ such that any two vertices in $\pi^{-1}(\{u\})$ can be connected by a path that never intersects $\pi^{-1}(F)$.        

        Then, for every finite subset $F$ of $V(\Sm)$, the infinite connected component $K$ of the complement of $F$ satisfies that the graph induced by $\La$ on $\pi^{-1}(K)$ is connected.
\end{lemma}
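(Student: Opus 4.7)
My plan is to reduce connectivity of $\pi^{-1}(K)$ to the connectivity of a single well-chosen fibre, and then transport this connectivity throughout $\pi^{-1}(K)$ via the fibration's lifting property. Fix a finite $F\subset V(\Sm)$ and let $K$ denote the infinite component of $\Sm\setminus F$. The first step is to pass from $F$ to the complement of $K$, namely $F':=V(\Sm)\setminus K$, which equals $F$ together with every finite component of $\Sm\setminus F$. I would verify that $F'$ is finite: each finite component must share an edge with $F$ since $\Sm$ is connected, and local finiteness bounds the number of edges leaving $F$, so there are only finitely many such components, each itself finite.

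Next, I would apply the hypothesis to this $F'$ to obtain a vertex $u\in V(\Sm)$ such that any two vertices of $\pi^{-1}(\{u\})$ are joined by a path avoiding $\pi^{-1}(F')$. Since $\pi^{-1}(F')=V(\La)\setminus\pi^{-1}(K)$, such paths in fact lie inside $\pi^{-1}(K)$, and provided $\pi^{-1}(\{u\})$ contains at least two vertices, one of them must sit in $\pi^{-1}(K)$, which forces $u\in K$. (If every fibre were a singleton, then $\pi$ would be a graph isomorphism onto $\Sm$ and the conclusion would be immediate from the connectedness of $K$; so one may assume the nontrivial case.)

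To conclude, given arbitrary $x,y\in\pi^{-1}(K)$, I would pick paths in $\Sm$ from $\pi(x)$ and from $\pi(y)$ to $u$ that stay inside $K$, which is possible since $K$ is connected. Lifting these paths to $\La$ via the defining property of a fibration yields paths starting at $x$ and $y$ whose projections remain in $K$; hence the lifts themselves remain in $\pi^{-1}(K)$ and terminate at some vertices $x',y'\in\pi^{-1}(\{u\})$. The hypothesis applied at $u$ then provides a path from $x'$ to $y'$ inside $\pi^{-1}(K)$, and concatenation delivers the desired path from $x$ to $y$ in $\pi^{-1}(K)$.

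The main conceptual obstacle is the passage from $F$ to $F'$: the hypothesis by itself merely prevents intersection with $\pi^{-1}(F)$, which is strictly weaker than staying within $\pi^{-1}(K)$ whenever $\Sm\setminus F$ has finite components (a direct application to $F$ would permit the connecting paths to drift into preimages of those finite components, which is not helpful). One-endedness enters precisely here, to ensure that $F'$ is still a finite set, so that the hypothesis may legitimately be invoked on $F'$ and deliver paths confined to $\pi^{-1}(K)$. Everything else is a clean exercise in lifting paths along $\pi$.
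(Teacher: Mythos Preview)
Your argument is correct and follows the same route as the paper: enlarge $F$ to $F':=V(\Sm)\setminus K$, apply the hypothesis to $F'$ so that the connecting paths in the fibre of $u$ actually lie in $\pi^{-1}(K)$, then lift paths in $K$ through the fibration and splice them at $\pi^{-1}(\{u\})$. Your write-up is in fact more explicit than the paper's about why $F'$ is finite and why the concatenated path stays in $\pi^{-1}(K)$ rather than merely avoiding $\pi^{-1}(F)$.

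One small slip: your reduction ``if every fibre were a singleton\dots'' does not cover the case where some fibres are non-trivial but the particular $u$ supplied by the hypothesis happens to have $|\pi^{-1}(\{u\})|=1$. The clean fix (and what the paper's ``In particular, $u$ belongs to $K$'' tacitly uses) is that the hypothesis applied to the degenerate pair $(z,z)$ already forces the trivial path $\{z\}$ to avoid $\pi^{-1}(F')$, hence $u\in K$ regardless of fibre size; no case split is needed.
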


\begin{proof}
    Let $F$ be a finite subset of $V(\Sm)$. Denote by $K$ the infinite connected component of the complement of $F$. Notice that, since $\Sm$ is one-ended, $K$ is uniquely defined and that its complement $F'$ is a finite set containing $F$. Applying the assumption to $F'$, we get a vertex $u$ such that any two vertices in $\pi^{-1}(\{u\})$ can be connected by a path that never intersects $\pi^{-1}(F')$. In particular, $u$ belongs to $K$ and any two vertices in $\pi^{-1}(\{u\})$ can be connected by a path that never intersects $\pi^{-1}(F)$.

    Let us now pick two vertices $x$ and $y$ in $\pi^{-1}(K)$. As the vertices $u$, $\pi(x)$, and $\pi(y)$ belong to the connected graph $K$, we can find a path from $\pi(x)$ to $u$ and a path from $u$ to $\pi(y)$. Since $\pi$ is a fibration, we can lift these paths to get two paths avoiding $\pi^{-1}(F)$: one path from $x$ to some vertex in $z_1\in \pi^{-1}(\{u\})$ and another path from some vertex $y$ to some $z_2\in \pi^{-1}(\{u\})$. To conclude, it remains to connect $z_1$ and $z_2$ by a path that avoids $\pi^{-1}(F)$, which is possible because of our choice of $u$.
\end{proof}

We are now able to state and prove a corollary of Proposition~\ref{prop:mon} that will be directly used in our proof of Theorem~\ref{thm:loc-bded}. In order to state it in the natural generality of the argument, recall that a finitely generated group $G$ is \defini{one-ended} if any Cayley graph of $G$ with respect to a finite generating set is one-ended, and that it is \defini{Noetherian} if all its subgroups are finitely generated. Also recall that every finitely generated nilpotent group is Noetherian \cite[Theorem~13.57]{drutu-kapovitch}, and that finitely generated nilpotent groups of superlinear growth are always one-ended: these two facts guarantee that Corollary~\ref{coro:mon} will cover all our situations of interest.

\begin{corollary}
\label{coro:mon}
Let $G$ and $H$ be two finitely generated groups, with $G$ one-ended and $H$ Noetherian. Let $\pi:H\to G$ be a surjective group homomorphism, and let $S$ be a finite generating subset of $H$. Let $\mathscr{H}$ denote the Cayley graph of $H$ with respect to $S$ and $\mathscr{G}$ denote the Cayley graph of $G$ with respect to $\pi(S)$.

Then, we have $\weak_E(\mathscr{G})\le \pei_E(\mathscr{H})$.
\end{corollary}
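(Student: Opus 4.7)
The plan is to apply Proposition~\ref{prop:mon} with $\La=\mathscr{H}$ and $\Sm=\mathscr{G}$, using Lemma~\ref{lem:mon} to verify the key connectedness hypothesis. First I would check that $\pi$ induces a fibration $V(\mathscr{H})\to V(\mathscr{G})$: given $x\in H$ and a neighbour $v=\pi(x)\pi(s)$ of $\pi(x)$ in $\mathscr{G}$, with $s\in S$, the vertex $y=xs$ is a neighbour of $x$ in $\mathscr{H}$ satisfying $\pi(y)=v$, and the $1$-Lipschitz condition is immediate since $\pi$ maps generators to generators (or possibly to the identity). The graph $\mathscr{G}$ is one-ended by hypothesis on $G$, so only the hypothesis of Lemma~\ref{lem:mon} remains to be verified.

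The main geometric input is the Noetherian hypothesis, which guarantees that $N:=\ker(\pi)$ is finitely generated, say by a finite symmetric set $T\subset N$. For each $t\in T$, fix a word $w_t$ in $S\cup S^{-1}$ representing $t$ in $H$, and let $L$ be the maximum length of these words. Then any $n\in N$ can be written as a product $t_1\cdots t_m$ of elements of $T$, and the concatenated word $w_{t_1}w_{t_2}\cdots w_{t_m}$ traces out a path from $e_H$ to $n$ in $\mathscr{H}$. A key observation is that every intermediate vertex of this path projects, via $\pi$, into the $L$-ball around $e_G$ in $\mathscr{G}$: indeed, after reading the first $k$ full blocks $w_{t_1},\ldots,w_{t_k}$ one is at an element of $N$, hence projects to $e_G$, and within the next block the projection travels at most $L$ additional steps.

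Given any finite subset $F$ of $V(\mathscr{G})$, I would use the infiniteness of $G$ (which follows from one-endedness) to pick a vertex $u$ of $\mathscr{G}$ with $d_\mathscr{G}(u,F)>L$. Fix some $h\in\pi^{-1}(\{u\})$, so that $\pi^{-1}(\{u\})=hN$. Given two elements $hn_1,hn_2$ of this fibre, left-translating the construction of the previous paragraph (applied to $n_1^{-1}n_2\in N$) produces a path from $hn_1$ to $hn_2$ in $\mathscr{H}$ whose projection lies in the $L$-ball around $u$ in $\mathscr{G}$, by left-invariance of the Cayley metric; hence it avoids $F$. The path itself therefore avoids $\pi^{-1}(F)$, Lemma~\ref{lem:mon} applies, and Proposition~\ref{prop:mon} yields the desired inequality $\weak_E(\mathscr{G})\le\pei_E(\mathscr{H})$. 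The main obstacle is the translation trick ensuring that the connecting path avoids $\pi^{-1}(F)$; once one realises that one should work in a fibre sitting far from $F$ rather than over the identity, the rest is routine.
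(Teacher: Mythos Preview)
Your proposal is correct and follows essentially the same approach as the paper: verify that $\pi$ is a fibration, use the Noetherian hypothesis to get a finite generating set for $\ker(\pi)$ with associated bound $L$ on word lengths, and then, given $F$, choose $u$ at distance greater than $L$ from $F$ so that left-translated paths within the fibre over $u$ project into the $L$-ball around $u$ and hence avoid $\pi^{-1}(F)$. The only cosmetic difference is that you phrase the construction at the identity and then translate, whereas the paper writes it directly at $x\in\pi^{-1}(\{u\})$; the content is identical.
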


\begin{proof}
    First, note that $\pi$ is a fibration from $\scr{H}$ to $\scr{G}$. By Proposition~\ref{prop:mon} and Lemma~\ref{lem:mon}, it thus suffices to check that, for every finite subset $F$ of $V(\mathscr{G})$, there is a vertex $u$ in $\scr G$ such that any two vertices in $\pi^{-1}(\{u\})$ can be connected by a path that never intersects $\pi^{-1}(F)$. 

   If $\ker(\pi)$ is the trivial group that consists only of the identity element, then there is nothing to prove. Thus, we assume that $\ker(\pi)$ is not the trivial group. As $H$ is Noetherian, $\ker(\pi)$ is a finitely generated group. We can therefore pick a non-empty finite set $T\subset \ker(\pi)$ that generates $\ker(\pi)$ as a group. Let $c:=\max_{t\in T} d_{\scr H}(1,t)$, where $1$ denotes the identity element of $H$. 

    Let $F$ be a finite subset of $V(\mathscr{G})$. The graph $\scr G$ being one-ended, it has in particular infinite diameter. We can therefore pick some vertex $u\in V(\mathscr{G})$ at a distance strictly greater than $c$ from $F$. Let $x$ and $y$ be two elements in $\pi^{-1}(\{u\})$. We can write $y=xk$ for some $k\in \ker(\pi)$. In turn, we can write $k=t_1\dots t_n$, where each $t_i$ belongs to $T\cup T^{-1}$. Finally, each $t_i$ can be written as $s_{i,1}\dots s_{i,m(i)}$, where each $s_{i,j}$ belongs to $S\cup S^{-1}$ and $m(i)\le c$. Putting all this together, we write $k$ as a word with letters $s_{i,j}$. Starting with the empty word and adding the letters $s_{i,j}$ one by one to the right yields a path from $1$ to $k$ in $\scr{G}$. Multiplying this path to the left by $x$ produces a path $\gamma$ from $x$ to $y$.

    We claim that $\gamma$ does not intersect $\pi^{-1}(F)$. To see this, first observe that all intermediate vertices of the form $xt_1\dots t_i$ visited by $\gamma$ are mapped to $u$ under $\pi$, and therefore cannot belong to $\pi^{-1}(F)$. As for the remaining vertices visited by $\gamma$, they are of the form $xt_1\dots t_i s_{i,1}\dots s_{i,j}$, which also cannot lie in $\pi^{-1}(F)$: indeed, their image under $\pi$ is at distance at most $j\le c$ from $u$, whereas $u$ was taken so that its distance from $F$ is larger than $c$. This completes the proof.
\end{proof}

\subsection{Cutconnectivity and quasi-isometries}
\label{sec:qi}
\newcommand{\fat}{{C''}}
\newcommand{\fatty}{{C'}}

Given $A\ge 1$, say that a function $\varphi:V(\mathscr{G})\rightarrow V(\mathscr{H})$ is an \defini{$A$-quasi-isometry} if the following two conditions hold:
\begin{enumerate}
    \item for all $x,y \in V(\mathscr{G})$, we have 
    $$\tfrac{1}{A} d_{\mathscr{G}}(x,y)-A\leq d_{\mathscr{H}}(\varphi(x),\varphi(y))\leq A d_{\mathscr{G}}(x,y)+A,$$
    \item for every $z\in V(\mathscr{H})$, there is some  $x\in V(\mathscr{G})$ such that $d_{\mathscr{H}}(z,\varphi(x))\leq A$.
\end{enumerate}
We say that the graphs $\mathscr{G}$ and $\mathscr{H}$ are \defini{$A$-quasi-isometric} if there is a pair of $A$-quasi-isometries $\varphi:V(\mathscr{G})\rightarrow V(\mathscr{H})$ and $\psi:V(\mathscr{H})\rightarrow V(\mathscr{G})$ such that for every $x\in V(\mathscr{G})$ and every $z\in V(\mathscr{H})$, we have $d_{\mathscr{G}}(\psi\circ\varphi (x),x)\leq A$ and $d_{\mathscr{H}}(\varphi\circ\psi (z),z)\leq A$. We call $\varphi$ and $\psi$ $A$-quasi-inverses of each other. The existence of an $A$-quasi-isometry implies that $\mathscr{G}$ and $\mathscr{H}$ are $B$-quasi-isometric for some constant $B$ that depends only on $A$.

\begin{proposition}
\label{prop:qi}
    For all $A,B\ge 1$, there is some $C=C(A,B)\ge0$ such that the following holds.
    Let $\mathscr{G}$ and $\mathscr{H}$ be two graphs. Assume that $\mathscr{G}$ and $\mathscr{H}$ are $A$-quasi-isometric and that $\weak(\mathscr{G})\le B$. Then, we have $\weak(\mathscr{H})\le C$.
\end{proposition}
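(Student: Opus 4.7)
The plan is to transport the finite minimal cutset $\Pi$ of $\mathscr{H}$ between two vertices $u,v$ to a finite minimal cutset of $\mathscr{G}$ living in a controlled neighbourhood of $\psi(\Pi)$, invoke the hypothesis $\weak(\mathscr{G})\le B$ there, and pull the coarse connectedness back. Fix $A$-quasi-inverses $\varphi\colon V(\mathscr{G})\to V(\mathscr{H})$ and $\psi\colon V(\mathscr{H})\to V(\mathscr{G})$, and denote by $A_u,A_v$ the $u$- and $v$-components of $\mathscr{H}\setminus \Pi$. By minimality of $\Pi$, every $w\in \Pi$ admits neighbours $w^u\in A_u$ and $w^v\in A_v$ in $\mathscr{H}$. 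For a constant $R=R(A)$ to be chosen, set
\[
\tilde \Pi \;:=\; \bigcup_{w\in \Pi} B_\mathscr{G}\bigl(\psi(w),R\bigr)\;\subseteq\; V(\mathscr{G}).
\]
A standard path-lifting argument, based on the bound $d_\mathscr{H}(\varphi(z),\varphi(z'))\le 3A^2+A$ for every edge $\{z,z'\}$ of $\mathscr{G}$, shows that $\tilde \Pi$ separates $\psi(a)$ from $\psi(b)$ in $\mathscr{G}$ whenever $a\in A_u$ and $b\in A_v$, provided $R$ is large enough in terms of $A$: given a would-be $\mathscr{G}$-path avoiding $\tilde \Pi$, pushing it by $\varphi$ and filling with short $\mathscr{H}$-geodesics yields an $\mathscr{H}$-path from near $a$ to near $b$, which must cross $\Pi$.

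\textbf{Applying the hypothesis and pulling back.} Given $w_1,w_2\in \Pi$ that we wish to chain in $\Pi$, extract a minimal sub-cutset $\Pi^*\subseteq \tilde \Pi$ of $\mathscr{G}$ between $\psi(w_1^u)$ and $\psi(w_2^v)$. Since $\tilde \Pi$ is finite, so is $\Pi^*$, and $\weak(\mathscr{G})\le B$ yields that $\Pi^*$ is $B$-connected. A $\mathscr{G}$-geodesic from $\psi(w_i^u)$ to $\psi(w_i^v)$ has length at most $4A$ and stays within $\mathscr{G}$-distance $2A$ of $\psi(w_i)$; provided $\psi(w_i^u)$ and $\psi(w_i^v)$ lie on opposite sides of $\Pi^*$ in $\mathscr{G}$, such a geodesic must meet $\Pi^*$, producing a vertex $z_i\in \Pi^*$ within $\mathscr{G}$-distance $6A$ of $\psi(w_i)$. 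The $B$-connectedness of $\Pi^*$ then yields a chain $z_1=t_0,\dots,t_n=z_2$ in $\Pi^*$ with jumps of $\mathscr{G}$-size $\le B$. Since each $t_j\in \tilde \Pi$ lies within $\mathscr{G}$-distance $R$ of some $\psi(y_j)$ with $y_j\in \Pi$ (arranged so that $y_0=w_1$ and $y_n=w_2$), we obtain $d_\mathscr{G}(\psi(y_j),\psi(y_{j+1}))\le 2R+B$, and the quasi-isometry inequality gives $d_\mathscr{H}(y_j,y_{j+1})\le A(2R+B)+A^2$. This produces a chain in $\Pi$ with jumps bounded by a constant depending only on $A$ and $B$, which is the desired $C$-connectedness of $\Pi$.

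\textbf{Main obstacle.} The delicate point is verifying that $\psi(w_i^u)$ and $\psi(w_i^v)$ really do lie on opposite sides of $\Pi^*$ in $\mathscr{G}$ for $i=1,2$, equivalently that $\psi(w_1^v)$ and $\psi(w_2^v)$ lie in the same component of $\mathscr{G}\setminus \Pi^*$, and symmetrically for the $u$-side. A naive attempt to lift an $\mathscr{H}$-path in $A_v$ from $w_1^v$ to $w_2^v$ may fail because the lift can stray into the neighbourhood $\tilde \Pi$ of $\psi(\Pi)$ and hit $\Pi^*$. I would resolve this by considering the $2A$-thickenings $\hat A_u,\hat A_v\subseteq V(\mathscr{G})$ of $\psi(A_u),\psi(A_v)$, which are automatically connected in $\mathscr{G}$, and by refining the definition of $\tilde \Pi$ (via a sandwich-type construction sitting between $\hat A_u$ and $\hat A_v$) so that removing any minimal sub-cutset $\Pi^*\subseteq \tilde \Pi$ leaves the cores of $\hat A_u$ and $\hat A_v$ connected. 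Once this coarse Jordan-type separation is in hand, tallying the constants $R=R(A)$, the radius $6A$, and the factor $A(2R+B)+A^2$ yields the desired $C=C(A,B)$.
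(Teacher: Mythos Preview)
Your proposal follows essentially the approach of Tim\'ar's original argument in \cite{timarcutset}, and the ``Main obstacle'' you flag is exactly the gap that was later discovered in that proof (see \cite[Remark~8.4]{brieussel-gournay} and Figure~\ref{fig:qi-timar} of the present paper). The problem is genuine and your proposed fix is not sufficient.

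Concretely, when you extract a minimal sub-cutset $\Pi^*\subseteq\tilde\Pi$ between $\psi(w_1^u)$ and $\psi(w_2^v)$, there is no reason at all for $\Pi^*$ to separate $\psi(w_2^u)$ from $\psi(w_2^v)$. Suppose $\Pi$ has two $C$-connected pieces $K_1\ni w_1$ and $K_2\ni w_2$ that are far apart in $\mathscr{H}$; then the thickenings of $\psi(K_1)$ and $\psi(K_2)$ are far apart in $\mathscr{G}$, and $\Pi^*$ can perfectly well live entirely inside the thickening of $\psi(K_1)$. In that case $\psi(w_2^u)$ and $\psi(w_2^v)$ sit together on the same side of $\Pi^*$, your geodesic from $\psi(w_2^u)$ to $\psi(w_2^v)$ never meets $\Pi^*$, and you obtain no vertex $z_2$. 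The $B$-connectedness of $\Pi^*$ then tells you only about connectivity within $K_1$, which you already knew. Your suggested ``sandwich-type construction'' does not address this: no matter how you refine $\tilde\Pi$, a minimal sub-cutset is free to use only a localised part of it, and the thickenings $\hat A_u,\hat A_v$ overlap heavily near $\psi(\Pi)$ so there is no clean ``core'' to preserve.

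The paper's proof circumvents this by a genuinely different manoeuvre. It proceeds by contradiction, assuming $\Pi$ splits as $K_1\sqcup K_2$ with the pieces $C$-far apart, and then \emph{constructs new endpoints} $x',y'$ in $\mathscr{H}$ (not $u,v$, and not of the form $w^u,w^v$) with the property that there is a path from $x'$ to $y'$ avoiding a large neighbourhood of $K_1$ \emph{and} another path avoiding a large neighbourhood of $K_2$. These paths are built by walking along $\partial H_x$ and $\partial H_y$ and exploiting the $C$-connectedness of $K_1$. Pushing to $\mathscr{G}$, this guarantees that neither $\hat K_1$ nor $\hat K_2$ alone separates $\psi(x')$ from $\psi(y')$, while $\hat K_1\cup\hat K_2$ does; hence any minimal sub-cutset must meet both $\hat K_1$ and $\hat K_2$, forcing a short jump between them and a contradiction. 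The crucial idea you are missing is this replacement of the endpoints by $x',y'$, which is what makes the ``both sides'' verification succeed.
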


\begin{remark}
    Proposition~\ref{prop:qi} also holds, with the same proof, if both occurrences of $\weak$ are replaced with $\pei$. This answers Question~2 from \cite{babson1999cut}, and further does so by providing uniformity for the constants involved. For several years, Question~2 had been considered solved \cite[Theorem 3.2]{timarcutset} but a gap in the proof was later found \cite[Remark~8.4]{brieussel-gournay}. The mistake is explained in Figure~\ref{fig:qi-timar}.
\end{remark}

\begin{figure}[h!]
    \centering
    \includegraphics[width=0.8\textwidth]{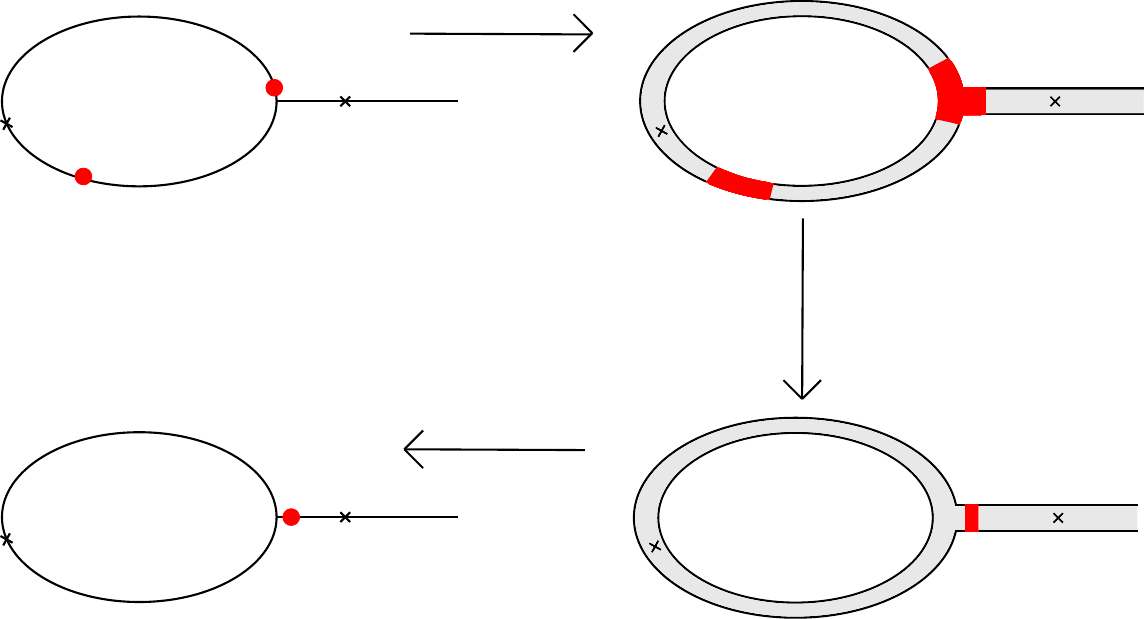}
    \put(-208,192){$\psi$}
    \put(-208,60){$\varphi$}
    \put(-320,155){{\color{red} $\Pi$}}
    \put(-372,153){$x$}
    \put(-257,170){$y$}
    \put(-35,175){$\psi(y)$}
    \put(-183,152){$\psi(x)$}
    \put(-35,43){$\psi(y)$}
    \put(-185,20){$\psi(x)$}
    \put(-372,23){$x$}
    \put(-257,38){$y$}
    \caption{In the top left corner, we depict the vertices $x$ and $y$ in the graph $\mathscr{H}$ as crosses, and the vertices of $\Pi$ as dots. In the top right corner, we depict the image under $\psi$, where we have highlighted the $\fat$-neighbourhood of $\psi(\Pi)$ in red. In the bottom right corner, we depict a minimal cutset between $\psi(x)$ and $\psi(y)$ in the $\fat$-neighbourhood of $\psi(\Pi)$, and in the bottom left corner, its image under $\varphi$. The latter gives no information on the connectivity of $\Pi$.}
    \label{fig:qi-timar}
\end{figure}

\begin{figure}[h!]
        \centering
        \includegraphics[width=0.9\linewidth]{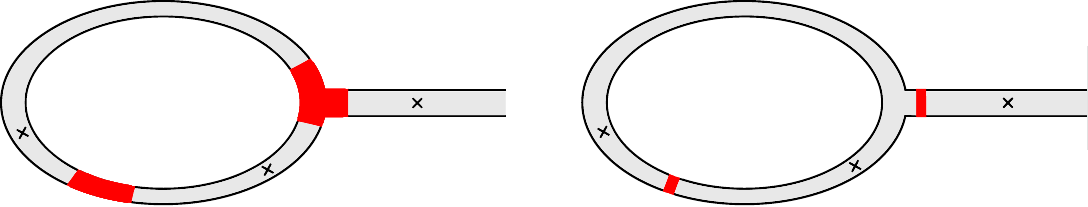}
        \put(-432,22){$\psi(x)$}
        \put(-260,50){$\psi(y)$}
        \put(-310,2){$v$}
        \put(-212,22){$\psi(x)$}
        \put(-38,50){$\psi(y)$}
        \put(-88,2){$v$}
        \caption{An illustration of the choice of the minimal cutset in the proof of Proposition~\ref{prop:qi}. In order to keep both parts, we do try to separate $\psi(x)$ from $\psi(y)$ but from $v$.}
\end{figure}

\begin{figure}[h!]
        \centering
        \includegraphics[width=0.9\linewidth]{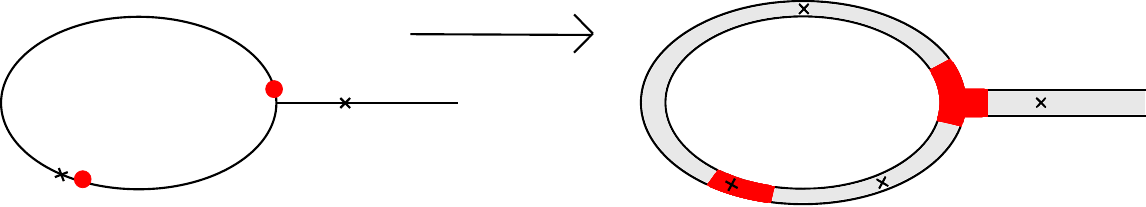}
        \put(-395,2){$x$}
        \put(-290,43){$y$}
        \put(-125,77){$u$}
        \put(-170,-5){$\psi(x)$}
        \put(-43,47){$\psi(y)$}
        \put(-95,-2){$v$}
        \put(-235,67){$\psi$}
        \caption{An example where we need to replace both vertices $\psi(x)$ and $\psi(y)$, and instead consider a minimal cutset with respect to two new vertices $u$ and $v$.}
\end{figure}


\begin{proof}


Let $A$ and $B$ be two constants. Let also $\scr G$ and $\scr H$ be two graphs as in the statement. Consider maps $\varphi:V(\mathscr{G})\to V(\mathscr{H})$ and $\psi:V(\mathscr{H})\to V(\mathscr{G})$ that are $A$-quasi-isometries and $A$-quasi-inverse of one another. Pick $\fatty\ge A^2+2A$, $\fat\ge A(A+2\fatty+B+3)$ and $C=2\fat+1$.  Let $\Pi$ be a finite minimal cutset between two vertices $x$ and $y$ in $\scr H$. Our goal is to prove that $\Pi$ is $C$-coarse connected.

Proceeding by contradiction, we assume that $\Pi$ is not $C$-coarse connected. Let us denote by $K_1$ a $C$-coarse connected component of $\Pi$, and by $K_2$ the union of all other $C$-coarse connected components of $\Pi$. By construction, and since we assume $\Pi$ not to be $C$-coarse connected, $(K_1,K_2)$ is a partition of $\Pi$ into two non-empty subsets, with $K_1$ being $C$-coarse connected.

As $\Pi$ is a cutset between $x$ and $y$, its complement consists of at least two connected components: one contains $x$, which we denote by $H_x$, and another one contains $y$, which we denote by $H_y$. Given any two distinct vertices $z$ and $z'$ in $\Pi$, there is a path from $z$ to $z'$ visiting only vertices of $H_x\cup\{z,z'\}$ --- it suffices to concatenate a path from $z$ to $x$ in $H_x\cup\{z\}$ with a path from $z'$ to $x$ in $H_x\cup\{z'\}$. Similarly, there is a path from $z$ to $z'$ visiting only vertices of $H_y\cup\{z,z'\}$.

Pick $z_1$ in the non-empty set $K_1$ and $z_2$ in the non-empty set $K_2$. Write $K_i(\fat)$ for the $\fat$-neighbourhood of $K_i$ and observe that, as $C\ge 2\fat+1$, the sets $K_1(\fat)$ and $K_2(\fat)$ are at distance at least 2 from each other. By what has been established in the previous paragraph, we can pick a path $\kappa_x$ from $z_1$ to $z_2$ that stays in $H_x\cup\{z_1,z_2\}$. Let $\kappa'_x$ denote the last excursion of $\kappa_x$ outside $K_1(\fat)$: this is the last subpath of $\kappa_x$ starting at distance 1 from $K_1(\fat)$, staying outside $K_1(\fat)$, and ending at $z_2$. Call $x'$ the starting point of $\kappa'_x$. Observe that, as $x'$ belongs to $H_x$ and is at distance at least $\fat+1$ from both $K_1$ and $K_2$, we have $B_{\fat}(x')\subset H_x$. Likewise, we can pick a path $\kappa'_y$ starting at distance 1 from $K_1(\fat)$, staying outside $K_1(\fat)$, ending at $z_2$, and such that its starting point $y'$ satisfies $B_{\fat}(y')\subset H_y$. By concatenating $\kappa'_x$ with the reversal of $\kappa'_y$, we obtain a path from $x'$ to $y'$ that does not intersect $K_1(\fat)$.

Let us now prove that there is a path from $x'$ to $y'$ that avoids $K_2(\fat)$. By definition of $x'$, we can pick a neighbour $x''$ of $x'$ that belongs to $K_1(\fat)$. Likewise, pick a neighbour $y''$ of $y'$ that belongs to $K_1(\fat)$. Since $K_1$ is $C$-coarse connected and $C\le2 \fat+1$, the set $K_1(\fat)$ is connected. Moreover, as it is disjoint from $K_2(\fat)$, we can pick a path from $x''$ to $y''$ that avoids $K_2(\fat)$. Appending $x'$ at the beginning and $y'$ at the end of this path yields a path from $x'$ to $y'$ that avoids $K_2(\fat)$.

For every $U\subset V_{\scr H}$, denote by $\hat U$ the $\fatty$-neighbourhood of $\psi(U)$ in $\scr G$. We first prove the following claim.

\begin{claim}
    \label{claim:absurd}
    The set $\hat\Pi$ is a finite cutset between $\psi(x')$ and $\psi(y')$. (Recall that this claim lies within a proof by contradiction.)
\end{claim}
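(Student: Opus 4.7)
The plan is to take an arbitrary path $\gamma=(v_0,\ldots,v_n)$ in $\mathscr{G}$ from $\psi(x')$ to $\psi(y')$ and show that some $v_j$ must lie in $\hat\Pi$. Finiteness of $\hat\Pi$ is automatic, as $\Pi$ is finite and $\hat\Pi$ is the $\fatty$-neighbourhood of a finite set in the locally finite graph $\mathscr{G}$.

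For the cutset property, I would lift $\gamma$ through $\varphi$: since consecutive images $\varphi(v_i),\varphi(v_{i+1})$ lie at $\mathscr{H}$-distance at most $2A$, inserting geodesic segments between them produces a walk $\tilde\gamma$ in $\mathscr{H}$ from $\varphi(\psi(x'))$ to $\varphi(\psi(y'))$. Using that $\varphi$ and $\psi$ are $A$-quasi-inverses, $\varphi(\psi(x'))\in B_A(x')\subset B_\fat(x')\subset H_x$; the connectedness of the component $H_x$ then lets me prepend a walk inside $H_x$ joining $x'$ to $\varphi(\psi(x'))$, and a symmetric construction on the $y$-side appends a walk inside $H_y$. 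The resulting concatenation is a walk $\gamma^*$ in $\mathscr{H}$ from $x'$ to $y'$ whose two endpoint portions lie in $H_x\cup H_y$ (hence avoid $\Pi$) and whose middle portion is $\tilde\gamma$.

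Since $x'$ and $y'$ sit in distinct components of $\mathscr{H}\setminus\Pi$, the walk $\gamma^*$ must contain a vertex $w\in\Pi$. By construction, $w$ cannot belong to the endpoint portions, so it lies on a geodesic interpolation between some pair $\varphi(v_j),\varphi(v_{j+1})$. That interpolation has length at most $2A$, so after possibly swapping indices I can take $d_\mathscr{H}(w,\varphi(v_j))\le A$. Pushing this back to $\mathscr{G}$ through the quasi-isometry bound and the approximate-inverse property yields
\begin{equation}
d_\mathscr{G}(\psi(w),v_j)\le d_\mathscr{G}(\psi(w),\psi(\varphi(v_j)))+d_\mathscr{G}(\psi(\varphi(v_j)),v_j)\le (A\cdot A+A)+A=A^2+2A\le \fatty,
\end{equation}
so $v_j\in \hat\Pi$, completing the argument.

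The main subtlety is that the crossing vertex $w$, a priori only a member of $\Pi\subset V(\mathscr{H})$, must be traced back to a specific vertex $v_j$ of the original $\mathscr{G}$-path; this is what forces the two-step distance comparison above and explains the explicit choice $\fatty\ge A^2+2A$ fixed in the setup. One should also verify en route that neither $\psi(x')$ nor $\psi(y')$ itself falls inside $\hat\Pi$ (so that the notion of cutset is non-vacuous); this follows from $B_\fat(x'),B_\fat(y')$ being disjoint from $\Pi$, combined with the much stronger lower bound imposed on $\fat$ relative to $\fatty$.
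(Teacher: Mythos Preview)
Your proof is correct and follows essentially the same approach as the paper: lift the $\mathscr{G}$-path via $\varphi$, locate a vertex of $\Pi$ within distance $A$ of some $\varphi(v_j)$, and push back through $\psi$ using the quasi-inverse property and the bound $\fatty\ge A^2+2A$. The only difference is presentational---you explicitly construct the interpolated walk $\tilde\gamma$ and concatenate endpoint excursions, whereas the paper works directly with the discrete sequence $(\varphi(\kappa_i))$ and a ``last index in $H_x$'' argument to find the crossing; the distance estimates are identical.
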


\begin{proof}
Recall that $B_{\fat}(x')\subset H_x$ and that $\fat \ge A$, so that we have $B_A(x')\subset H_x$. Similarly, we have $B_A(y')\subset H_y$.

Let $\kappa=(\kappa_0,\dots,\kappa_\ell)$ denote a path from $\psi(x')$ to $\psi(y')$ in $\scr G$. We want to prove that $\kappa$ intersects $\hat \Pi$. Let us consider the sequence of vertices $(\varphi(\kappa_0),\dots,\varphi(\kappa_\ell))$. This sequence of vertices is generally not a path in $\scr H$ but any two subsequent terms of this sequence lie at distance at most $2A$. The vertex $\varphi(\kappa_0)=\varphi\circ \psi (x')$ belongs to $B_A(x')$, thus to $H_x$. Likewise, $\varphi(\kappa_\ell)$ belongs to $H_y$, hence does not belong to $H_x$. Pick the largest possible $i$ such that $\varphi(\kappa_i)\in H_x$. Note that such an $i$ exists and satisfies $i<\ell$. By construction, $\varphi(\kappa_i)\in H_x$, $\varphi(\kappa_{i+1})\notin H_x$ and $d_{\scr H}(\varphi(\kappa_i),\varphi(\kappa_{i+1}))\le 2 A$. Therefore, at least one of the two vertices $\varphi(\kappa_i)$ and $\varphi(\kappa_{i+1})$ must be at distance at most $A$ from $\Pi$. Denote by $j$ such an index in $\{i,i+1\}$. Then, we have $d_{\scr G}(\kappa_j, \psi\circ \varphi(\kappa_j))\le A$ and $d_{\scr G}(\psi( \varphi(\kappa_j)),\psi(\Pi))\le A^2+A$. Therefore, the triangle inequality yields $d_{\scr G}(\kappa_j,\psi(\Pi))\le A^2+2A\le\fatty$, thus proving that $\kappa_j$ belongs to $\hat \Pi$.   
\end{proof}

Now set $u=\psi(x')$ and $v=\psi(y')$. Our aim now is to show that neither $\hat{K_1}$ nor $\hat{K_2}$ is a cutset between $u$ and $v$. Let $j\in \{1,2\}$. Recall that there exists a path from $x'$ to $y'$ staying at distance strictly more than $\fat$ from $K_j$. Applying $\psi$ to this path yields a sequence of vertices at distance at most $2A$ from one another, starting at $u$ and ending at $v$. Arguing as in the proof of the claim and observing that $\fat\ge A^2+A\fatty+A$, we get the existence of a path from $u$ to $v$ avoiding $\hat{K_j}$, thus establishing that neither $\hat{K_1}$ nor $\hat{K_2}$ is a cutset between $u$ and $v$ in $\scr G$.

We can now conclude as follows. The set $\hat \Pi=\hat{K_1}\cup \hat{K_2}$ is a cutset between $u$ and $v$ and none of $\hat{K_1}$ and $\hat{K_2}$ is a cutset between $u$ and $v$. As a consequence, there is a minimal cutset contained in $\hat \Pi$ that intersects both $\hat{K_1}$ and $\hat{K_2}$. Therefore, we can pick $w_1\in \hat{K_1}$ and $w_2\in \hat{K_2}$ such that $d_{\scr G}(w_1,w_2)\le B$. By definition of $\hat{K_1}$ and $\hat{K_2}$, we can further pick $z'_1\in K_1$ and $z'_2\in K_2$ such that $d_{\scr G}(w_1,\psi(z'_1))\le \fatty$ and $d_{\scr G}(w_2,\psi(z'_2))\le \fatty$. Using the triangle inequality, we get $d_{\scr G}(\psi(z'_1),\psi(z'_2))\le 2\fatty+B$. Applying $\varphi$ and recalling that $\varphi\circ\psi$ is $A$-close to the identity, we get $d_{\scr H}(z'_1,z'_2)\le (2\fatty+B)A+3A\le C$. This contradicts the fact that $K_2$ is disjoint from the $C$-coarse connected component $K_1$ of $\Pi$, thus concluding the proof.
\end{proof}


\subsection{Proof of Theorem~\ref{thm:loc-bded}}
\label{sec:loc-bded}

    Let $(\mathscr{G}_n)_{n\geq 1}$ be a sequence of elements of $\mathfrak{G}$ converging to some $\mathscr{G}_\infty \in\mathfrak{G}$. We will prove that the sequence $(\weak(\mathscr{G}_n))_{n\geq 1}$ is bounded, which suffices to conclude that Theorem~\ref{thm:loc-bded} holds. 
    
    Pick an arbitrary root $o_n$ in every $\mathscr{G}_n$. By \cite[Theorem 2.3]{tesseratointon}, we can find a constant $A\ge 1$ and, for every $n\geq 1$, a normal subgroup $\Gamma_n$ of $\mathrm{Aut}(\mathscr{G}_n)$ such that, for every $n\geq 1$, the following conditions hold:
    \begin{enumerate}
    \item \label{item:fibre} every fibre of the projection $V(\mathscr{G}_n)\to V(\mathscr{G}_n)/\Gamma_n$ has diameter at most $A$,
    \item the group $\mathsf{Aut}(\mathscr{G}_n)/\Gamma_n$ contains a nilpotent subgroup $G_n$ of rank, depth and index at most $A$,
    \item the set $S_n=\{g\in \mathsf{Aut}(\mathscr{G}_n)/\Gamma_n\,:\, d_{\mathscr{G}_n/\Gamma_n}(g\cdot o_n,o_n)\le 1\}$ is a finite generating subset of $\mathsf{Aut}(\mathscr{G}_n)/\Gamma_n$,
    \item \label{item:stab} every vertex stabiliser of the action of $\mathsf{Aut}(\mathscr{G}_n)/\Gamma_n$ on $\mathscr{G}_n/\Gamma_n$ has cardinality at most $A$,
    \item \label{item:qi} the graph $\mathscr{G}_n$ is $A$-quasi-isometric to the Cayley graph $\mathscr{H}_n$ of $\mathsf{Aut}(\mathscr{G}_n)/\Gamma_n$ with respect to $S_n$.
\end{enumerate}

Our aim now is to show that the cardinality of $S_n$ is uniformly bounded by a constant independent of $n$. As the sequence $(\mathscr{G}_n)_{n\geq 1}$ is convergent, it has bounded degree. Therefore, in item~\ref{item:fibre}, there is a uniform bound not only on the diameter but also on the cardinality of the fibres. Since the sequence $(\mathscr{G}_n)_{n\geq 1}$ has bounded degree, this implies that the sequence $(\mathscr{G}_n/\Gamma_n)_{n\geq 1}$ has bounded degree as well. Combining this with item~\ref{item:stab}, we deduce that the cardinality of $S_n$ admits a uniform upper bound independent of $n$.


As the index of $G_n$ in $\mathsf{Aut}(\mathscr{G}_n)/\Gamma_n$ is bounded, we can find a constant $B\ge 2$ such that, for every $n\geq 1$, there is a Cayley graph $\mathscr{H}'_n$ of $G_n$ that is $B$-quasi-isometric to $\mathscr{H}_n$. Since the sequence $(\mathscr{H}_n)_{n\geq 1}$ has bounded degree and the index of $G_n$ in $\mathsf{Aut}(\mathscr{G}_n)/\Gamma_n$ is bounded, we can further assume, up to readjusting the value of $B$, that the degree of $\mathscr{H}'_n$ is bounded by $B$. This is a quantitative version of the classical fact that a finite-index subgroup of a finitely generated group is finitely generated, and it can be proved by arguing as in the proofs of \cite[Theorem 8.37 and Corollary 8.47]{drutu-kapovitch}.

Combining item~\ref{item:qi} with the above, we deduce that there is a constant $C\ge 1$ such that, for every $n\geq 1$, the graph $\mathscr{G}_n$ is $C$-quasi-isometric to a Cayley graph $\mathscr{H}'_n$ of $G_n$ of degree at most $B$. Let $H_{A,B}$ denote the free nilpotent group with $B$ generators and of depth $A$. This is a nilpotent group with the following universal property: for any nilpotent group $K$ generated by (at most) $B$ elements and with depth at most $A$, there exists a group homomorphism from $H_{A,B}$ to $K$. As $G_n$ is nilpotent of depth at most $A$ and because its Cayley graph $\mathscr{H}'_n$ has degree at most $B$, the universal property implies that we can realise $\mathscr{H}'_n$ as a quotient of the Cayley graph $\mathscr{U}$ of $H_{A,B}$ with respect to its standard generating set. This means in particular that there exists a surjective group homomorphism from $H_{A,B}$ to $G_n$ that maps the standard generating set of $H_{A,B}$ to the generating set of $G_n$ that defines $\mathscr{H}'_n$.

The Cayley graph $\mathscr{U}$ of $H_{A,B}$ has at least quadratic growth, as we have taken its number of generators to be $B\ge 2$. Since $\mathscr{U}$ is a Cayley graph of a nilpotent group, this implies that it is one-ended. Besides, the group $H_{A,B}$ is finitely generated and nilpotent, hence admits a finite presentation. By  \cite{babson1999cut, timarcutset}, the previous two sentences imply that $\pei_E(\mathscr{U})<\infty$. Applying Corollary~\ref{coro:mon} and Lemma~\ref{lem:bond}, we get that the sequence $(\weak(\mathscr{H}'_n))_{n\geq 1}$ is bounded. Recalling that for every $n$, the graph $\mathscr{G}_n$ is $C$-quasi-isometric to $\mathscr{H}'_n$, boundedness of $(\weak(\mathscr{G}_n))_{n\geq 1}$ now follows from Proposition~\ref{prop:qi}.\qed

\hypertarget{target:analytic}{\section{Proof of Theorems~\ref{thm:analyticity-poly} and \ref{thm:analyticity-local}}}
\label{sec:analytic}

We start by giving some necessary definitions. Let $\mathscr{G}=(V,E)$ be a graph in $\mathfrak{G}$. For $n\ge m\ge 1$ and $x\in V$, we define the uniqueness event 
$$U(m,n,x)=\{\text{there exists at most one cluster in $B_n(x)$ intersecting $B_m(x)$ and $S_n(x)$}\},$$
where $S_n(x)$ denotes the \emph{sphere} of radius $n$ around $x$, i.e.\ the set of vertices at distance $n$ from $x$.
In \cite{supercritpoly}, it is proved that for every $p>p_c$, every $x\in V$, and every $n$ large enough, we have
\begin{equation}\label{eq: local uniqueness}
\mathbb{P}_p(B_{n}(x)\longleftrightarrow S_{10n}(x),~U(2n,5n,x))\geq 1-e^{-\sqrt{n}}.    
\end{equation}
This is the main ingredient we will use from that paper.

With this result at hand, our strategy for proving Theorems~\ref{thm:analyticity-poly} and \ref{thm:analyticity-local} follows a renormalisation approach. To this end, let us now introduce some events that are suitable for renormalisation purposes. Let $N\geq 1$ be a positive integer, and consider $x\in V$. A vertex $x$ is called \defini{$N$-good} if for every vertex $z$ that either coincides with $x$ or is a neighbour of $x$, the event $\{B_{N}(z)\longleftrightarrow S_{10N}(z),~ U(2N,5N,z)\}$ occurs. Otherwise, the vertex $x$ is called \defini{$N$-bad}.

Instead of working directly with the cluster of the origin $\mathcal{C}_o$, we shall work with a coarse-grained version of it. 
We write $\mathcal{C}_o(N)$ for the set of vertices $x$ such that $d_{\mathscr{G}}(x,\mathcal{C}_o)\leq N$. Besides, for any $A\subset V$, let $\partial A$ denote its \defini{exposed boundary}, i.e.\ the set of vertices in $A$ that are incident to the infinite component of $\mathscr{G}\setminus A$. We will be interested in $\partial \mathcal{C}_o(N)$, the exposed boundary of $\mathcal{C}_o(N)$.

For a subset $A$ of $V$, let us write $\textup{diam}(A)$ for its diameter, i.e.
\[\textup{diam}(A):=\sup\{d_{\mathscr{G}}(x,y) :\, x,y\in A\}.\]We first show that when $\mathcal{C}_o$ is of large but finite diameter, every point in $\partial \mathcal{C}_o(N)$ is $N$-bad.

\begin{proposition}\label{prop: bad vert}
Let $N\geq 1$ and let $\omega$ be a configuration such that $20N<\textup{diam}(\mathcal{C}_o)<\infty$. Then every vertex of $\partial \mathcal{C}_o(N)$ is $N$-bad.    
\end{proposition}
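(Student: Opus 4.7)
My plan is to argue by contradiction: assuming some $x \in \partial \mathcal{C}_o(N)$ is $N$-good, I will produce two distinct clusters of $\omega$ restricted to $B_{5N}(x)$, each meeting both $B_{2N}(x)$ and $S_{5N}(x)$, thereby violating the uniqueness event $U(2N,5N,x)$, which is part of the $N$-goodness requirement at $z=x$. The two clusters will come from $\mathcal{C}_o$ itself and from an independent open connection forced at a neighbor of $x$ lying outside the thickening $\mathcal{C}_o(N)$.

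For the first cluster, I would use that $d(x,\mathcal{C}_o)\leq N$ to pick $y\in \mathcal{C}_o\cap B_N(x)$, and that $\mathcal{C}_o$ is connected in $\omega$ with diameter greater than $20N$ to produce a second vertex $z_1\in\mathcal{C}_o$ with $d_{\mathscr{G}}(x,z_1)>10N$. The open path in $\mathcal{C}_o$ from $y$ to $z_1$ must cross $S_{5N}(x)$, so the cluster of $y$ in the configuration restricted to $B_{5N}(x)$ intersects both $B_{2N}(x)$ and $S_{5N}(x)$. For the second cluster, the definition of exposed boundary gives a neighbor $x'$ of $x$ in the infinite component of $\mathscr{G}\setminus\mathcal{C}_o(N)$, hence satisfying $d_{\mathscr{G}}(x',\mathcal{C}_o)\geq N+1$; applying $N$-goodness at $z=x'$ then yields an open path from some $v\in B_N(x')$ to some $w\in S_{10N}(x')$. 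Two elementary triangle-inequality computations (using $N\geq 1$) place $v$ inside $B_{2N}(x)$ and $w$ outside $B_{5N}(x)$, so the cluster of $v$ in the configuration restricted to $B_{5N}(x)$ likewise meets $B_{2N}(x)$ and $S_{5N}(x)$.

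To conclude, I only need these two clusters to be distinct. The inequality $d_{\mathscr{G}}(x',\mathcal{C}_o)>N\geq d_{\mathscr{G}}(x',v)$ forces $v\notin\mathcal{C}_o$, so $y$ and $v$ lie in different clusters of $\omega$, and hence \emph{a fortiori} in different clusters of the subgraph induced on $B_{5N}(x)$. This contradicts $U(2N,5N,x)$ and completes the argument. I do not anticipate a serious obstacle; the mildly delicate points are only the two triangle-inequality checks and the observation that distinctness of full $\omega$-clusters transfers to distinctness in any restricted subgraph.
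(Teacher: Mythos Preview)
Your argument is correct and essentially identical to the paper's proof: both produce one crossing from $\mathcal{C}_o$ (using $d(x,\mathcal{C}_o)\le N$ and $\mathrm{diam}(\mathcal{C}_o)>20N$) and a second crossing from the connection event $B_N(x')\leftrightarrow S_{10N}(x')$ at a neighbour $x'\notin\mathcal{C}_o(N)$, then use $d(x',\mathcal{C}_o)>N$ to force the two clusters to be distinct, contradicting $U(2N,5N,x)$. The only cosmetic difference is that you package the case split as a proof by contradiction, whereas the paper first disposes of the case where $B_N(x')\not\leftrightarrow S_{10N}(x')$ and then shows $U(2N,5N,x)$ fails in the remaining case.
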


\begin{proof}
Consider a vertex $x \in \partial \mathcal{C}_o(N)$ and let $y \in V \setminus \mathcal{C}_o(N)$ be a neighbour of $x$ (such a $y$ exists). If there is no open path connecting $B_N(y)$ to $S_{10N}(y)$, then $x$ is $N$-bad, and there is nothing to prove. Thus, we may assume that there exists an open path $\gamma_1$ connecting $B_N(y)$ to $S_{10N}(y)$.  
We now show that the event $U(2N,5N,x)$ does not occur. Indeed, since $B_N(y) \subset B_{2N}(x)$ and $S_{5N}(x) \subset B_{10N}(y)$, the restriction of $\gamma_1$ to $B_{5N}(x)$ forms an open path connecting $B_{2N}(x)$ to $S_{5N}(x)$.  
Moreover, the intersection of $\mathcal{C}_o$ with $B_{10N}(x)$ contains a path $\gamma_2$ that connects $B_N(x)$ to $S_{10N}(x)$. This follows from the fact that $\mathcal{C}_o$ contains a vertex in $B_N(x)$ by definition, and that $\textup{diam}(\mathcal{C}_o) > 20N$.  
Since $y \in V \setminus \mathcal{C}_o(N)$, the paths $\gamma_1$ and $\gamma_2$ must belong to distinct clusters. This implies that $U(2N,5N,x)$ does not occur, and hence $x$ is $N$-bad, as required.  
\end{proof}

Let us fix a constant $t$ such that the conclusion of Theorem~\ref{thm:loc-bded} holds. In particular, $\partial \mathcal{C}_o(N)$ is $t$-coarse connected for every $N\geq 1$. Below, we will take $N$ to be sufficiently large, and in particular choose $N\geq t$. 

\begin{definition}
Let $N\geq t$, and let $\omega$ be a configuration such that $20N<\mathrm{diam}(\mathcal{C}_o)<\infty$. Define $\mathcal{I}$ to be the $t$-coarse connected component of $\partial \mathcal{C}_o(N)$ within the set of $N$-bad vertices in $\omega$. Define $\overline{\mathcal{I}}$ to be the set of vertices $x\in V$ such that $d_{\mathscr{G}}(x,\mathcal{I})\leq 10N$.    
\end{definition}

Let us now discuss the motivation for defining $\mathcal{I}$ in this way. Our goal is to express $\theta$ in terms of events $A_n$ such that each $A_n$ is measurable with respect to the state of $O(n)$ edges, and $\mathbb{P}_p(A_n)$ decays exponentially in $n$. This exponential decay is crucial for obtaining an analytic extension of $\theta$ to a complex neighbourhood of $p$. While the set $\partial \mathcal{C}_o(N)$ has an exponential tail, since it consists of $N$-bad vertices, it does not satisfy the measurability condition: the state of the edges in a small neighbourhood of $\partial \mathcal{C}_o(N)$ does not necessarily determine the event $\{o\centernot\longleftrightarrow \infty\}$. This arises from the possibility that the infinite cluster -- and in particular its boundary -- comes close to $\mathcal{C}_o$. However, this event gives rise to additional $N$-bad vertices, and by the $t$-connectivity of boundaries, these vertices form a $t$-coarse connected set consisting of $N$-bad vertices that contains $\partial \mathcal{C}_o(N)$. The definition of $\mathcal{I}$ is designed precisely to capture this phenomenon.

The following lemma will allow us to obtain an exact expression for $\theta$, and is the main reason why we defined $\overline{\mathcal{I}}$ as above.

\begin{lemma}\label{lem:cutset}
Let $N\geq t$. Consider a configuration $\omega$ such that $20N<\textup{diam}(\mathcal{C}_o)<\infty$ and $\mathcal{I}$ is finite. Then the closed edges lying in $\overline{\mathcal{I}}$ form a bond-cutset between $o$ and $\infty$.
\end{lemma}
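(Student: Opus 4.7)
The plan is to show that every $\mathscr{G}$-path $\pi=(o=x_0,x_1,\ldots)$ from $o$ to infinity contains a closed edge whose two endpoints both lie in $\overline{\mathcal{I}}$. Since $\mathcal{C}_o(N)$ is a finite $N$-neighbourhood of the finite cluster $\mathcal{C}_o$, there is a last index $i$ with $x_i\in\mathcal{C}_o(N)$; the successor $x_{i+1}$ then lies in the infinite component $K$ of $\mathscr{G}\setminus\mathcal{C}_o(N)$, because $\pi$ stays outside $\mathcal{C}_o(N)$ from $x_{i+1}$ onwards. By the exposed-boundary definition, $x_i\in\partial\mathcal{C}_o(N)\subseteq\mathcal{I}\subseteq\overline{\mathcal{I}}$, and $x_{i+1}$, being adjacent to $x_i\in\mathcal{I}$, lies in $\overline{\mathcal{I}}$ as well. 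If $\{x_i,x_{i+1}\}$ is closed, we are done.

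The only remaining case is when $\{x_i,x_{i+1}\}$ is open, so $x_i,x_{i+1}$ belong to a common cluster $\mathcal{C}'\neq\mathcal{C}_o$ (since $x_{i+1}\notin\mathcal{C}_o(N)\supseteq\mathcal{C}_o$). Here the plan is to exploit the local uniqueness input \eqref{eq: local uniqueness} through the $N$-badness mechanism of Proposition~\ref{prop: bad vert}: any vertex $y$ lying at distance at most $2N$ from both $\mathcal{C}_o$ and $\mathcal{C}'$ fails $U(2N,5N,y)$, because $\mathcal{C}_o$ has diameter greater than $20N$ (so by connectedness it intersects both $B_{2N}(y)$ and $S_{5N}(y)$) and likewise $\mathcal{C}'$ intersects $B_{2N}(y)$ via $x_{i+1}$ and reaches beyond $\mathcal{C}_o(N)$ to cross $S_{5N}(y)$. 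These vertices are $N$-bad, and they accompany the close approach of $\mathcal{C}'$ to $\mathcal{C}_o$ in a $t$-connected fashion (using $N\geq t$ together with the coarse connectedness of the relevant boundaries from Theorem~\ref{thm:loc-bded}); since they are $t$-close to $\partial\mathcal{C}_o(N)$ at $x_i$, they lie in $\mathcal{I}$. The whole crossing region of $\mathcal{C}'$ between $\mathcal{C}_o(N)$ and $K$ is therefore inside the $10N$-thickening $\overline{\mathcal{I}}$.

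I then conclude by iterating: since $\mathcal{I}$ is finite, the portion of $\mathcal{C}'$ accompanying this crossing is finite, so $\pi$ must either exit $\mathcal{C}'$ through a closed edge that, by the preceding argument, still lies within $\overline{\mathcal{I}}$, or enter yet another cluster $\mathcal{C}''$ crossing $\mathcal{C}_o(N)$ into $K$, to which the same analysis applies. Iterating through the successive clusters that $\pi$ visits while inside $\overline{\mathcal{I}}$ eventually produces the desired closed edge. The main obstacle I anticipate is carrying out this iteration cleanly and making rigorous the ``$t$-connected accompaniment'' statement; the $10N$-thickening in the definition of $\overline{\mathcal{I}}$ is chosen precisely so that the radii $2N$ and $5N$ appearing in $U(2N,5N,\cdot)$ fit strictly inside $\overline{\mathcal{I}}$, ensuring that the $N$-bad tracer of one cluster crossing overlaps the tracer of the next and that the whole chain stays inside $\overline{\mathcal{I}}$, while the finiteness of $\mathcal{I}$ guarantees that the iteration terminates.
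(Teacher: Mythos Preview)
Your proposal identifies a reasonable starting point but contains a genuine gap that you yourself flag: the ``$t$-connected accompaniment'' and the ensuing iteration are not made rigorous, and the sketched ideas do not obviously fill them. Concretely, your assertion that $\mathcal{C}'$ crosses $S_{5N}(y)$ need not hold --- all you know about $\mathcal{C}'$ is that it contains the adjacent pair $x_i,x_{i+1}$, so it may have diameter~$1$. More seriously, the iteration through ``successive clusters that $\pi$ visits'' is ill-posed: $\pi$ is an arbitrary $\mathscr{G}$-path and may use closed edges at will, and nothing in your outline forces those closed edges to lie near $\mathcal{I}$. For instance, $\pi$ could leave $\mathcal{C}_o$ through a single closed edge at some $u\in\mathcal{C}_o$ lying far from $\partial\mathcal{C}_o(N)$, land directly in the infinite cluster, and use only open edges thereafter; your argument would then have to show that $\mathcal{I}$ extends from $\partial\mathcal{C}_o(N)$ to within $10N$ of $u$. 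That is precisely the ``badness propagation'' you have not established, and it is not obvious how to do so from your outline.

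The paper's proof bypasses these difficulties via a different structure. It argues by contradiction: assume some path $\gamma$ has all its $\overline{\mathcal{I}}$-edges open. It then focuses on the last vertex $u$ of $\mathcal{C}_o$ (not of $\mathcal{C}_o(N)$) visited by $\gamma$; the edge of $\gamma$ after $u$ is closed, which under the contradiction hypothesis forces $u\notin\mathcal{I}$. Hence $u$ lies in a finite component $A$ of $\mathscr{G}\setminus\mathcal{I}$, and every vertex of $\partial A$ is $N$-\emph{good} (being $t$-close to but outside $\mathcal{I}$). Taking $v$ to be the first vertex of $\partial A$ on $\gamma$ after $u$, the paper chains the local-uniqueness events $U(2N,5N,\cdot)$ along the $t$-connected set $\partial A$, together with the fact that $\gamma$ is open inside $B_{5N}(v)\subset\overline{\mathcal{I}}$, to build an open path from $u$ to $v$, contradicting the choice of $u$. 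The key conceptual difference is that the paper exploits \emph{goodness} along $\partial A$ under a contradiction hypothesis, rather than attempting to track badness directly along $\pi$.
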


\begin{proof}
Let us fix a configuration $\omega$ such that $20N<\textup{diam}(\mathcal{C}_o)<\infty$, and let us proceed by contradiction. We can thus choose some path $\gamma$ from $o$ to $\infty$ such that all of its edges lying in $\overline{\mathcal{I}}$ are open. 

Now let $u$ be the last vertex of $\mathcal{C}_o$ that $\gamma$ visits. Then $u=\gamma(k)$ for some $k$, and $\gamma(k+1)\not \in \mathcal{C}_o$, hence the edge $\{u,\gamma(k+1)\}$ cannot be open. In particular, this implies that $u\not \in \mathcal{I}$. Notice that $u$ needs to be in a finite connected component $B$ of $\mathscr{G}\setminus\partial \mathcal{C}_o(N)$. As $u$ does not belong to $\mathcal{I}$ and because, $\mathscr{G}\setminus \mathcal{I}\subset \mathscr{G}\setminus\partial \mathcal{C}_o(N)$, it is the case that $u$ lies in some finite connected component $A$ of $\mathscr{G}\setminus \mathcal{I}$. We emphasise that all edges of $\mathscr{G}$ with both endpoints out of $\mathcal{I}$ are included in the definition of $A$, irrespective of whether they are open or closed. Notice that all vertices of $\partial A$ are $N$-good. Now let $v$ be the first vertex of $\partial A$ after $u$ that $\gamma$ visits; such a vertex exists as $\gamma$ is infinite and $v$ belongs to $A$. We claim that there is an open path from $u$ to $v$. This implies that $v$ lies in $\mathcal{C}_o$, which contradicts the definition of $u$, and thus the desired result follows.

Let us prove the claim. Note that all edges of $\gamma$ in $B_{5N}(v)$ are open, since $B_{5N}(v)\subset\overline{\mathcal{I}}$ and all edges of $\gamma$ in $\overline{\mathcal{I}}$ are open. If the restriction of $\gamma$ between $u$ and $v$ lies in $B_{5N}(v)$, then this restriction gives the desired open path. Thus, we may assume until the end of the proof that this is not the case, which implies the following property:
\begin{equation}
\label{eq:connect}
    \text{the path $\gamma$ connects $v$ to $S_{5N}(v)$ by an open path.}
\end{equation}We want to argue that there exist vertices $w\in\mathcal{C}_o$ and $w'\in \partial A$ such that $d_{\mathscr{G}}(w,w')\leq N$. To this end, introduce $\tilde A$ (resp.\ $\tilde B$) the complement of the infinite connected component of the complement of $A$ (resp.\ $B$). If $\mathcal{C}_o$ is not a subset of $\tilde A$, then it intersects the inner vertex-boundary\footnote{The inner vertex-boundary of a set $A$ is the set of all vertices of $A$ that are adjacent to some vertex out of $A$.} of $\tilde A$, which is $\partial A$. In this case, we may even pick $w=w'$. On the opposite, if $\mathcal{C}_o$ is a subset of $\tilde A$, then it is a subset of $\tilde B$ lying at distance $N$ from $\partial \mathcal{C}_o(N)$, which is in this case equal to $\partial_V \tilde B$. In this second case, existence of a suitable pair $(w,w')$ follows from the fact that $\mathcal{C}_o\subset \tilde A\subset \tilde B$, so that any path from $\mathcal{C}_o$ to $\partial_V \tilde B$ has to cross the inner vertex-boundary of $\tilde A$.

Now we connect $u$ to $v$ by an open path as follows. Since $\weak(\mathscr{G})\leq t$, we know that $\partial A$ is $t$-coarse connected, which means that we can find a sequence of vertices $v_1=w',\ldots,v_{\ell}=v$ in $\partial A$ such that $d_{\mathscr{G}}(v_i,v_{i+1})\leq t$ for every $i<\ell$. As all vertices $v_1,\ldots,v_{\ell}$ are $N$-good, $B_N(v_i)$ is connected to $S_{10N}(v_i)$ by an open for every $i\le\ell$. Furthermore, $w$ is connected to $S_{5N}(w')$ by an open path due to the fact that $\mathrm{diam}(\mathcal{C}_w)>20N$, and by \eqref{eq:connect}, the vertex $v=v_{\ell}$ is connected to $S_{5N}(v_{\ell})$ by an open path. Using that $N\geq t$ and that all the events $U(2N,5N,v_i)$ happen, we deduce that we can connect $w$ to $v_{\ell}=v$ by an open path. Since $w\in \mathcal{C}_o$, we can thus connect $u$ to $v$ by an open path, which proves the claim and completes the proof.
\end{proof}

With the above result at hand, we now give the following definition.

\begin{definition}
\label{defi:interface}
Let $\mathscr{I}\subset V$ be a finite set of vertices. We say that $\mathscr{I}$ is an \defini{interface} if it is $t$-coarse connected and either $o\in \mathscr{I}$ or $o$ lies in a finite connected components of $\mathscr{G}\setminus \mathscr{I}$. Let $\overline{\mathscr{I}}$ denote the set of vertices $x$ such that $d_{\mathscr{G}}(x,\mathscr{I})\leq 10N$. We say that an interface $\mathscr{I}$ \defini{occurs in $\omega$ at scale $N$} if
\begin{enumerate}
    \item\label{bad} every $x\in \mathscr{I}$ is bad,
    \item\label{good} every $x\in V\setminus \mathscr{I}$ such that $d_{\mathscr{G}}(x,\mathscr{I})\leq t$ is good,
    \item\label{finite} the closed edges of $\overline{\mathscr{I}}$ form a bond-cutset between $o$ and $\infty$.
\end{enumerate}
When clear from context, we simply write that $\mathscr{I}$ occurs.
\end{definition}

The following result is a direct consequence of the above definition.

\begin{lemma}\label{lem: disjoint}
Given a configuration $\omega$, the family of all occurring interfaces consists in mutually disjoint sets.
\end{lemma}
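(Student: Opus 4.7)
The plan is to show that conditions~\ref{bad} and~\ref{good} in Definition~\ref{defi:interface} together characterise each occurring interface as a $t$-connected component of the set of $N$-bad vertices (with respect to $t$-adjacency). Once this is established, disjointness of distinct interfaces will follow immediately, since two $t$-connected components of the same set are either equal or disjoint.

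First I would fix an occurring interface $\mathscr{I}$ and a vertex $x \in \mathscr{I}$, and denote by $K$ the $t$-connected component of $x$ inside the set of $N$-bad vertices. The inclusion $\mathscr{I} \subset K$ is immediate: by condition~\ref{bad}, $\mathscr{I}$ consists entirely of bad vertices, and by definition $\mathscr{I}$ is $t$-connected and contains $x$. For the reverse inclusion, I would proceed by contradiction. Suppose $y \in K \setminus \mathscr{I}$. Since $y \in K$, there is a chain $x = z_0, z_1, \ldots, z_k = y$ of bad vertices with $d_\mathscr{G}(z_i, z_{i+1}) \le t$. Choose the smallest index $i$ with $z_i \notin \mathscr{I}$; such an $i$ exists because $z_0 = x \in \mathscr{I}$ and $z_k = y \notin \mathscr{I}$, and we have $i \ge 1$. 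Then $z_{i-1} \in \mathscr{I}$, so $d_\mathscr{G}(z_i, \mathscr{I}) \le t$, and condition~\ref{good} forces $z_i$ to be $N$-good, contradicting that $z_i$ lies on a chain of bad vertices. Hence $\mathscr{I} = K$.

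Finally, to conclude, suppose $\mathscr{I}_1$ and $\mathscr{I}_2$ are two occurring interfaces in $\omega$ with a common vertex $x$. By the characterisation above, both $\mathscr{I}_1$ and $\mathscr{I}_2$ equal the $t$-connected component of $x$ inside the set of $N$-bad vertices, so $\mathscr{I}_1 = \mathscr{I}_2$. The family of occurring interfaces is therefore mutually disjoint.

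I do not anticipate any genuine obstacle here: the proof is essentially a maximality argument that uses only conditions~\ref{bad} and~\ref{good} (condition~\ref{finite} is not needed for disjointness). The main point to be careful about is the asymmetric role of $\mathscr{I}$ in condition~\ref{good}---the condition is stated only for vertices \emph{outside} $\mathscr{I}$---so the argument chooses the \emph{first} time the chain exits $\mathscr{I}$ in order to apply it correctly.
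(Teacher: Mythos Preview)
Your proof is correct and follows the same approach as the paper's own proof: both argue that conditions~\ref{bad} and~\ref{good} force each occurring interface to be a full $t$-connected component of the set of $N$-bad vertices, whence disjointness. The paper compresses this into a single sentence, while you spell out the chain argument explicitly; nothing is missing or different in substance.
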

\begin{proof}
Since interfaces are $t$-coarse connected and because of items~\ref{bad} and \ref{good} in Definition~\ref{defi:interface}, occurring interfaces are in particular $t$-coarse connected components of the set of bad vertices. The desired result follows.
\end{proof}

Let us write $\mathcal{C}_x^{\mathrm{bad}}(N)$ for the $t$-coarse connected component of $x$ in $\omega$ within the set of $N$-bad vertices, with the convention that $\mathcal{C}_x^{\mathrm{bad}}(N)=\varnothing$ if $x$ is $N$-good. We now show that for every $p>p_c$, for every $N$ large enough (depending on $p$), $\mathcal{C}_x^{\mathrm{bad}}(N)$ is almost surely finite, which will allow us to deduce that $\overline{\mathcal{I}}$ is almost surely finite. The proof is similar to that of \cite[Lemma 10.1]{supercritpoly}. We include a proof for completeness and to justify the uniformity in $p$ in the statement. The latter is crucial for obtaining a suitable expression for $\theta$ that holds in an open neighbourhood. 

\begin{proposition}\label{prop: finite}
Let $\mathscr{G}\in \mathfrak G$ and consider $p_0\in (p_c(\mathscr{G}),1]$. There exist $N_0\geq 1$ and $c>0$ such that the following holds. For every $N\geq N_0$, there exists $\varepsilon>0$ such that
\[
\forall\; p\in (p_0-\varepsilon,p_0+\varepsilon),\quad \forall\; n\geq 1, \qquad \mathbb{P}_p(|\mathcal{C}_x^{\mathrm{bad}}(N)|\geq n)\leq e^{-cn}.
\]
In particular, $\mathcal{C}_x^{\mathrm{bad}}(N)$ is finite almost surely.
\end{proposition}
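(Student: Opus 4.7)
The plan is a Peierls-type argument combining~\eqref{eq: local uniqueness} with the finite-range dependence of the bad events, following closely the strategy of~\cite[Lemma~10.1]{supercritpoly}, with the added ingredient of continuity in $p$ to obtain uniformity in a neighbourhood of $p_0$.

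First I would observe that the event $\{x \text{ is } N\text{-bad}\}$ is a cylinder event depending only on edges inside $B_{10N+1}(x)$: each of the events $\{B_N(z) \longleftrightarrow S_{10N}(z)\}$ and $U(2N,5N,z)$ in the definition of goodness is measurable with respect to edges in $B_{10N}(z)$, and $z$ ranges over $\{x\}$ and its neighbours. Consequently $p \mapsto \mathbb{P}_p(x \text{ is } N\text{-bad})$ is a polynomial in $p$, hence continuous. Letting $\Delta$ denote the common degree of $\mathscr{G}$ and applying~\eqref{eq: local uniqueness} at $p = p_0$ with a union bound over the $\Delta + 1$ vertices involved in the definition of goodness yields $\mathbb{P}_{p_0}(x \text{ is } N\text{-bad}) \leq (\Delta+1) e^{-\sqrt{N}}$ for $N$ sufficiently large. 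By continuity, for each such $N$ one can pick $\varepsilon = \varepsilon(N) > 0$ so that $\mathbb{P}_p(x \text{ is } N\text{-bad}) \leq 2(\Delta+1) e^{-\sqrt{N}}$ uniformly for $p \in (p_0 - \varepsilon, p_0 + \varepsilon)$.

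Next, the cylindrical nature of bad events implies that those at vertices at pairwise distance exceeding $20N + 2$ are jointly independent. Writing $M = M(N,\mathscr{G})$ for an upper bound on $|B_{20N+2}(x)|$, which is finite by polynomial growth, a standard greedy packing argument shows that every $t$-connected subset $S \subset V$ of cardinality $n$ admits a subset $S' \subset S$ with $|S'| \geq n/M$ whose vertices are at pairwise distance exceeding $20N + 2$, so that the bad events at the vertices of $S'$ are jointly independent. Combined with the classical counting bound that the number of $t$-connected subsets of $V$ of size $n$ containing the fixed vertex $x$ is at most $K^n$, for some $K = K(\mathscr{G}, t)$ (obtained via BFS enumeration in the $t$-power graph $(V, E_{\leq t})$, whose maximum degree is uniformly bounded by polynomial growth), a union bound yields
\begin{equation*}
\mathbb{P}_p(|\mathcal{C}_x^{\mathrm{bad}}(N)| \geq n) \leq K^n \bigl(2(\Delta+1) e^{-\sqrt{N}}\bigr)^{n/M}.
\end{equation*}

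The proof is then completed by choosing $N_0$ large enough that, for every $N \geq N_0$, the base $K \bigl(2(\Delta+1) e^{-\sqrt{N}}\bigr)^{1/M}$ is at most $e^{-c}$ for some fixed $c > 0$, which yields the stated exponential bound; the almost sure finiteness of $\mathcal{C}_x^{\mathrm{bad}}(N)$ then follows by letting $n \to \infty$. The main technical difficulty is to close this inequality with a constant $c$ that is uniform in $N \geq N_0$, since $M$ grows polynomially in $N$ while only $\sqrt{N}$ appears in the bad-vertex decay; the way around this---following~\cite[Lemma~10.1]{supercritpoly}---is to sharpen the single-vertex estimate via a bootstrap/renormalisation step turning~\eqref{eq: local uniqueness} into a stretched exponential whose rate dominates the polynomial growth of $M$. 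Uniformity of $c$ in $N \geq N_0$ is then natural: as $N$ grows the bad density decreases and the exponential rate only improves, so any rate valid at $N = N_0$ extends to all larger $N$.
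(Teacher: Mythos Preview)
Your overall plan---Peierls counting, extracting a separated subset for independence, and bounding the bad-vertex probability via~\eqref{eq: local uniqueness} plus continuity in $p$---is the right one and matches the paper's strategy. However, the way you combine counting and independence has a genuine gap, and the difficulty you flag at the end is more serious than a uniformity issue in~$N$: your displayed bound never gives decay.

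Concretely, you enumerate $t$-connected sets of size $n$ containing $x$ (yielding $K^n$ with $K=K(\mathscr G,t)>1$ \emph{independent of~$N$}), but your separated subset has size only $n/M$ with $M=|B_{20N+2}|$. Your bound is then $K^n\,\bigl(2(\Delta+1)e^{-\sqrt N}\bigr)^{n/M}$, and for this to be less than $1$ you would need $\sqrt N\gtrsim M\log K$. Since $M$ grows at least linearly (in fact polynomially) in $N$ while $\log K>0$ is fixed, this fails for \emph{every} $N$, not just large ones. The point is that you are paying the full entropy $K^n$ of the dense set while collecting independence only on the sparse subset.

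The paper's fix is not to sharpen the single-vertex bad probability, but to change what is being counted. One observes that the maximal $2r$-separated subset $\mathscr I'$ of a $t$-connected set is itself $(4r+t)$-connected (a short triangle-inequality argument), with $r=10N+1$. One then performs the union bound directly over $(4r+t)$-connected, $2r$-separated sets $\mathcal S$ of size $m\ge n/|B_{2r}|$ meeting $B_{2r}(x)$: their number is at most $|B_{2r}|\,(e\,|B_{4r+t}|)^m$, and independence gives probability $\mathbb P_p(o\text{ bad})^m$. Now both factors are raised to the \emph{same} power $m$, so one only needs
\[
e\,|B_{4r+t}|\,\mathbb P_p(o\text{ is $N$-bad})\;\le\;e\,|B_{4r+t}|\,(\Delta+1)e^{-\sqrt N}\;<\;1,
\]
which holds for $N$ large because $|B_{4r+t}|$ is polynomial in $N$. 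This is the step that replaces your proposed ``bootstrap of the bad probability''; no improvement of~\eqref{eq: local uniqueness} is needed, and the reference~\cite[Lemma~10.1]{supercritpoly} in fact proceeds via this same sparse-counting trick rather than a bootstrap.

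A minor additional remark: your last sentence (``as $N$ grows the rate only improves'') is not correct either---in both your scheme and the paper's, the packing loss $|B_{2r}|$ in the denominator of the exponent grows with $N$, so the rate in $n$ actually deteriorates as $N\to\infty$. This does not affect the statement as used downstream, but the heuristic you give for uniformity in $N$ is misleading.
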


\begin{proof}
We will first prove the desired exponential decay for every fixed parameter $p>p_c(\mathscr{G})$, and then use a continuity argument to deduce the uniformity of the constant $c$ in terms of $p$.

Let us consider some $N>0$ that will be chosen along the way to be large enough depending on $p$.  
We will estimate $\mathbb{P}_p(|\mathcal{C}_x^{\mathrm{bad}}(N)|\geq n)$ using a union bound. To this end, let $\mathscr{I}$ be a $t$-coarse connected set of vertices that contains $x$ such that $|\mathscr{I}|=n$. Write $r=10N+1$.
We say that a set of vertices $A$ is $2r$-separated if $d_{\mathscr{G}}(u,v)>2r$ for all distinct $u,v \in A$. Let $\mathscr{I}'\subset \mathscr{I}$ be a $2r$-separated set that is maximal for inclusion among $2r$-separated subsets of $\mathscr{I}$. Notice that by maximality, for every $y\in \mathscr{I}$, there exists $z\in \mathscr{I}'$ such that $y\in B_{2r}(z)$. Hence, $|\mathscr{I}'|\geq |\mathscr{I}|/|B_{2r}|$.

We claim that $\mathscr{I}'$ is $(4r+t)$-coarse connected. Indeed, consider two vertices $u,v \in \mathscr{I}'$ and let $z,w\in \mathscr{I}$ be two vertices such that $d_{\mathscr{G}}(u,z)\leq 2r$ and $d_{\mathscr{G}}(v,w)\leq 2r$. As $\mathscr{I}$ is $t$-coarse connected, we can pick a $t$-coarse connected path $\gamma$ in $\mathscr{I}$ from $z$ to $w$, i.e.\ consecutive vertices in $\gamma$ are at distance at most $t$ apart in $\mathscr{G}$.  Consider two consecutive vertices $a,b$ in $\gamma$, and pick two vertices $a', b'\in \mathscr{I}'$ such that $d_{\mathscr{G}}(a,a')\leq 2r$ and $d_{\mathscr{G}}(b,b')\leq 2r$. Such vertices exist by the maximality of $\mathscr{I}'$. We then have $d_{\mathscr{G}}(a',b')\leq 4r+t$. Iterating this procedure we can construct a $(4r+t)$-coarse connected path from $u$ to $v$. This proves the claim.

We can thus conclude that when the event $\{|\mathcal{C}_x^{\mathrm{bad}}(N)|\geq n\}$ happens, there exists a $(4r+t)$-coarse connected set of vertices $\mathcal{S}$ that is $2r$-separated, has cardinality $|\mathcal{S}|\geq n/|B_{2r}|$, contains a vertex in $B_{2r}(x)$, and such that every vertex in $\mathcal{S}$ is $N$-bad. The union bound gives
$$
\mathbb{P}_p(|\mathcal{C}_x^{\mathrm{bad}}(N)|\geq n)\leq \sum_{m\geq n/|B_{2r}|}\sum_{\mathcal{S}\in P_m}\mathbb{P}_p(\forall\,~y\in \mathcal{S}, y \text{ is N-bad}),
$$
where $P_m$ is the set of $\mathcal{S}$ as above that have cardinality $m$. Note that $\mathcal{S}$ being $2r$-separated implies that the events $\{y \text{ is N-bad}\}$ are independent from each other when $y$ ranges over $\mathcal{S}$, hence 
$$
\mathbb{P}_p(\forall\, y\in \mathcal{S},~y \text{ is N-bad})=\mathbb{P}_p(o \text{ is N-bad})^m.
$$
A union bound and \eqref{eq: local uniqueness} give $\mathbb{P}_p(o \text{ is $N$-bad})\leq (D+1)e^{-\sqrt{N}}$ provided that $N$ is large enough, where $D$ is the degree of $\mathscr{G}$.

It remains to estimate the cardinality of $P_m$. Consider the graph $\mathscr{G}^{(i)}$ with vertex set $V$, where we draw an edge between $u$ and $v$ if $1\le d_{\mathscr{G}}(u,v)\leq i$. Then each $\mathcal{S}\in P_m$ is connected in $\mathscr{G}^{(4r+t)}$. Since each vertex in $\mathscr{G}^{(4r+t)}$ has degree at most $|B_{4r+t}|-1$, it follows from \cite[Lemma 5.1]{KestenBook} that 
$$|P_m|\leq |B_{2r}| |B_{4r+t}|^m e^m.$$
Now recall that $r=10N+1$. Since $|B_{4r+t}|$ grows polynomially fast, we can choose $N$ to be large enough so that 
\begin{equation}\label{eq:choose N}
    |B_{4r+t}|e (D+1)e^{-\sqrt{N}} <1.
\end{equation}
With this choice, we see that $\mathbb{P}_p(|\mathcal{C}_x^{\mathrm{bad}}(N)|\geq n)$ decays exponentially fast. The uniformity in $p$ now follows from the continuity of the function $p\mapsto \mathbb{P}_p(o \text{ is $N$-bad})$.
\end{proof}

As a corollary we obtain the following result regarding finiteness of $\mathscr{I}$.

\begin{corollary}\label{cor: finite}
Let $\mathscr{G}\in \mathfrak G$ and consider $p_0\in (p_c(\mathscr{G}),1]$. There exists $N_0\geq 1$ such that the following holds. For every $N\geq N_0$, there exists $\varepsilon>0$ such that
\[
\forall\; p\in (p_0-\varepsilon,p_0+\varepsilon), \qquad \mathbb{P}_p(|\mathcal{I}|<\infty \mid 20N<\mathrm{diam}(\mathcal{C}_o)<\infty)=1.
\]
\end{corollary}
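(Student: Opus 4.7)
The plan is to reduce this corollary directly to Proposition~\ref{prop: finite} via a countable union bound. Fix $N_0$ and $c$ as in Proposition~\ref{prop: finite}, take any $N \ge N_0$, and fix the corresponding $\varepsilon$ and $p \in (p_0 - \varepsilon, p_0 + \varepsilon)$. I work inside the event $E := \{20N < \mathrm{diam}(\mathcal{C}_o) < \infty\}$.

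On $E$ the cluster $\mathcal{C}_o$ is finite, so its $N$-thickening $\mathcal{C}_o(N)$ is a finite union of $N$-balls and is therefore finite; in particular $\partial \mathcal{C}_o(N)$ is a finite vertex set. By Proposition~\ref{prop: bad vert} every vertex of $\partial \mathcal{C}_o(N)$ is $N$-bad, and by definition $\mathcal{I}$ is the $t$-connected component of $\partial \mathcal{C}_o(N)$ inside the set of $N$-bad vertices. Consequently
\[
\mathcal{I} \;\subseteq\; \bigcup_{x \in \partial \mathcal{C}_o(N)} \mathcal{C}_x^{\mathrm{bad}}(N),
\]
and since this is a union over a finite index set, the event $\{|\mathcal{I}| = \infty\}\cap E$ forces $|\mathcal{C}_x^{\mathrm{bad}}(N)| = \infty$ for at least one $x \in V$.

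Therefore $\{|\mathcal{I}| = \infty\} \cap E$ is contained in $\bigcup_{x \in V} \{|\mathcal{C}_x^{\mathrm{bad}}(N)| = \infty\}$. Proposition~\ref{prop: finite}, upon summing the exponential tail over $n$, yields $\mathbb{P}_p(|\mathcal{C}_x^{\mathrm{bad}}(N)| = \infty) = 0$ for each $x \in V$, and the vertex set of the transitive graph $\mathscr{G}$ is countable, so countable subadditivity gives $\mathbb{P}_p(|\mathcal{I}| = \infty,\, E) = 0$. This yields the claimed conditional probability (trivially so in the degenerate case $\mathbb{P}_p(E) = 0$).

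No substantive obstacle arises: the corollary is really just a bookkeeping consequence of Proposition~\ref{prop: finite} once one observes that, on the event $E$, the set $\mathcal{I}$ is anchored to the finite boundary $\partial \mathcal{C}_o(N)$. The only mild step deserving mention is the finiteness of $\partial \mathcal{C}_o(N)$ on $E$, which follows immediately from the local finiteness of $\mathscr{G}$ and $|\mathcal{C}_o| < \infty$.
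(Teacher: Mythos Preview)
Your proof is correct and follows exactly the route the paper intends: the paper's own proof is the single sentence ``This follows from Propositions~\ref{prop: bad vert} and \ref{prop: finite},'' and you have simply spelled out the bookkeeping. One minor simplification: since $\partial\mathcal{C}_o(N)$ is $t$-connected and consists of $N$-bad vertices, for any single $x\in\partial\mathcal{C}_o(N)$ you already have $\mathcal{I}=\mathcal{C}_x^{\mathrm{bad}}(N)$, so the union and the countable subadditivity over all of $V$ are not needed.
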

\begin{proof}
This follows from Propositions~\ref{prop: bad vert} and \ref{prop: finite}.
\end{proof}

It follows from Corollary~\ref{cor: finite}, and Lemma~\ref{lem:cutset} that for every $p_0>p_c$ and for every $N$ large enough, whenever $20N<\textup{diam}(\mathcal{C}_o)<\infty$, an interface occurs at scale $N$. Moreover, whenever an interface occurs, $o$ is not connected to infinity by property \ref{finite} of Definition~\ref{defi:interface}. Thus, for some $\varepsilon>0$ we have the exact equality
\begin{equation}\label{eq: theta equality}
1-\theta(p)=\mathbb{P}_p(\mathrm{diam}(\mathcal{C}_o)\leq 20N)+\mathbb{P}_p(\mathrm{diam}(\mathcal{C}_o)> 20N,\text{ an interface occurs at scale } N)
\end{equation}
for every $p\in (p_0-\varepsilon,p_0+\varepsilon)$,
and we emphasise that this expression holds with the same constant $N$ in the whole interval $(p_0-\varepsilon,p_0+\varepsilon)$.

Let us now fix an $N$ such that \eqref{eq: theta equality} holds. Note that $\mathbb{P}_p(\mathrm{diam}(\mathcal{C}_o)\leq 20N)$ is trivially analytic since it is a polynomial in $p$. To handle the second term, we wish to use the inclusion-exclusion principle to express $\mathbb{P}_q(\mathrm{diam}(\mathcal{C}_o)> 20N,\text{ an interface occurs at scale }N)$ as a sum 
over \defini{multi-interfaces}, i.e.\ finite non-empty collections of \defini{disjoint} interfaces. The fact that we can restrict to collections of disjoint interfaces follows from Lemma~\ref{lem: disjoint}. We say that a multi-interface $\mathscr{M}=\{\mathscr{I}_1,\mathscr{I}_2,\ldots,\mathscr{I}_k\}$ \defini{occurs in $\omega$ at scale $N$} if all $\mathscr{I}_i$ occur, and we write $|\mathscr{M}|:=\sum_{i=1}^k |\mathscr{I}_i|$ for its size. 

Some care is required to show that the inclusion-exclusion expansion converges. This follows from the next result, where we will show that for every $N$ large enough, the contribution of multi-interfaces of size $n$ decays exponentially fast in $n$. We stress that the decay being exponential is crucial for obtaining the convergence of the inclusion-exclusion expansion for complex values of the parameter $p$.

\begin{proposition} \label{prop: exp dec}
Let $\mathscr{G}$ be a graph in $\mathfrak{G}$, and let $p_0\in (p_c(\mathscr{G}),1]$. There exist constants $\varepsilon$, $r$, $N, c, C>0$ such that
$$\sum_{\substack{\mathscr{M}: |\mathscr{M}|=n}}\mathbb{P}_{p,\mathscr{H}}(\mathscr{M} \text{ occurs at scale }N)\leq Ce^{-cn}$$
for every $\mathscr{H}\in \mathfrak{G}$ such that $R(\mathscr{H},\mathscr{G})\geq r$, every $p\in (p_0-\varepsilon,p_0+\varepsilon)\cap[0,1]$ and every $n\geq 1$.
\end{proposition}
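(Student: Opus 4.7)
The plan is to combine a probabilistic bound per multi-interface with a combinatorial count of multi-interfaces, in the same spirit as the proof of Proposition~\ref{prop: finite}. First, I set up uniformity. Whether a vertex is $N$-bad depends only on the edge configuration in its ball of radius $\sim 11N$, so for $R(\mathscr{H},\mathscr{G})$ large relative to $N$ this event and its probability agree in $\mathscr{G}$ and $\mathscr{H}$. Together with the local uniqueness estimate \eqref{eq: local uniqueness} and continuity in $p$ of what is a polynomial in finitely many edges, this yields $\mathbb{P}_{p,\mathscr{H}}(v \text{ is } N\text{-bad}) \le 2(D+1)e^{-\sqrt N}$ uniformly for $p$ in a small neighborhood of $p_0$ and $\mathscr{H}$ sufficiently close to $\mathscr{G}$. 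The connectivity constant $t$ entering Definition~\ref{defi:interface} is uniform by Theorem~\ref{thm:loc-bded}, and the isoperimetric estimates needed below are uniform on a neighborhood of $\mathscr{G}$ because all such graphs are uniformly quasi-isometric to a common nilpotent Cayley graph, via the finitary Trofimov theorem of Tessera and Tointon.

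For the probability bound, I fix an auxiliary separation scale $s > 11N$ so that the $N$-badness events at vertices at graph-distance more than $2s$ apart become independent. Given a multi-interface $\mathscr{M}$ of size $n$, I extract a maximal $2s$-separated subset $\mathscr{M}'\subseteq\mathscr{M}$; a standard packing argument gives $|\mathscr{M}'| \ge n/|B_{2s}|$. Since the occurrence of $\mathscr{M}$ forces every vertex of $\mathscr{M}'$ to be $N$-bad and these events are independent,
\[
\mathbb{P}_{p,\mathscr{H}}(\mathscr{M}\text{ occurs}) \le \bigl(2(D+1)e^{-\sqrt N}\bigr)^{n/|B_{2s}|}.
\]

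The combinatorial count adapts the skeleton-level argument used in the proof of Proposition~\ref{prop: finite}. A multi-interface decomposes into disjoint $t$-connected pieces (the interfaces), each surrounding $o$; within each piece of size $m_i$ the extracted $2s$-separated subset $\mathscr{I}'_i$ is $(4s+t)$-connected of size at least $m_i/|B_{2s}|$. By Kesten's lemma, the number of $(4s+t)$-connected subsets of a given size containing a fixed anchor grows exponentially with base $|B_{4s+t}|e$. Each anchor can be confined to a region near $o$: since $\mathscr{I}_i$ is a vertex cutset between $o$ and infinity, the isoperimetric inequality for transitive polynomial-growth graphs (of growth degree $d\ge 2$, uniform on a neighborhood of $\mathscr{G}$) forces the vertex of $\mathscr{I}_i$ nearest to $o$ to lie at distance $O(m_i^{1/(d-1)})$. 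Combining the skeleton count with the probability bound, each skeleton vertex contributes a cost $|B_{4s+t}|e\cdot p_{\text{bad}} \sim N^{O(1)}e^{-\sqrt N}$, which is strictly less than $1$ for $N$ large. Summing the resulting geometric series, together with polynomial factors for the anchor positions and for the extensions from $\mathscr{M}'$ back to $\mathscr{M}$, yields the desired $Ce^{-cn}$ estimate for suitable $\varepsilon,r,N,c,C$.

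The hard part is the combinatorial management in the last step: because the skeleton of a multi-interface is a disjoint union of $(4s+t)$-connected pieces (one per interface) rather than a single connected set, the anchor-counting cost is paid once per piece and must be dominated by the per-skeleton-vertex probability gain. The uniformity of $t$ from Theorem~\ref{thm:loc-bded} and the uniformity of the isoperimetric constant on a neighborhood of $\mathscr{G}$ are what make this count uniform in $\mathscr{H}$, while continuity of badness probabilities in $p$ gives the uniformity in $p$; once these pieces are balanced one chooses $N$ large, then $\varepsilon$ small and $r$ large, to obtain the stated bound.
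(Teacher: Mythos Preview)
There is a genuine gap in your combinatorial management. You bound $\mathbb{P}_{p,\mathscr{H}}(\mathscr{M}\text{ occurs})\le p_{\mathrm{bad}}^{\,n/|B_{2s}|}$ and then attempt to sum directly over all multi-interfaces of size $n$. Your claim that the anchor positions and the ``extensions from $\mathscr{M}'$ back to $\mathscr{M}$'' contribute only \emph{polynomial} factors in $n$ is unjustified and in fact false as stated. Two things go wrong. First, you never bound the number $k$ of interfaces in $\mathscr{M}$; your isoperimetric anchoring gives roughly $\mathrm{poly}(m_i)$ choices per piece, so the total anchor cost $\prod_i \mathrm{poly}(m_i)$ is genuinely exponential in $n$ when $k$ is proportional to $n$, with a rate that does \emph{not} depend on $N$. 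Second, the number of multi-interfaces of size $n$ is at least the number of $t$-connected sets of size $n$ surrounding $o$, which already grows like $(|B_t|e)^n$ --- again with an $N$-independent rate. Against this, your probability gain per original vertex is only $p_{\mathrm{bad}}^{1/|B_{2s}|}$, whose rate $\sqrt{N}/|B_{2s}|\sim N^{1/2-d}$ tends to $0$ as $N\to\infty$. The per-skeleton-vertex balance $|B_{4s+t}|e\cdot p_{\mathrm{bad}}<1$ that you correctly identify is the right trade-off \emph{at the skeleton level}, but it does not survive once you multiply back by the number of multi-interfaces sharing a given skeleton.

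The paper repairs exactly this point with two ideas you are missing, both resting on a fixed bi-infinite geodesic $\gamma$ through $o$ (Lemmas~\ref{lem:geodesic} and~\ref{lem:Hardy-Ram}). Since every interface separates $o$ from infinity it must meet $\{\gamma(i):i\ge 0\}$, and disjointness of occurring interfaces forces these intersection points to be distinct. This yields (a) the bound $k\le\sqrt{2tn}$ on the number of interfaces, making the anchor cost $(tn|B_{2r}|)^{\sqrt{2tn}}$ genuinely subexponential, and (b) a \emph{deterministic} bound $\mathcal{N}_n\le e^{c\sqrt{tn}}$ on the number of occurring multi-interfaces of size $n$, via an injection into integer partitions. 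The paper then writes $\sum_{\mathscr{M}}\mathbb{P}(\mathscr{M}\text{ occurs})=\mathbb{E}[\mathcal{N}_n]\le e^{c\sqrt{tn}}\mathbb{P}(\mathcal{A}_n)$ and bounds $\mathbb{P}(\mathcal{A}_n)$ by a union over skeletons only --- never over full multi-interfaces --- so that the per-vertex cost is $|B_{6r}|e\cdot p_{\mathrm{bad}}\sim N^{O(1)}e^{-\sqrt{N}}<1$. Your isoperimetric anchoring is a plausible alternative to the geodesic for locating a \emph{single} interface, but it does not by itself control $k$, and without something like Lemma~\ref{lem:Hardy-Ram} your sum over $\mathscr{M}$ simply diverges at the relevant scale.
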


In order to prove the above proposition, we first need some intermediate results. Let us fix a bi-infinite geodesic $\gamma=(\ldots,\gamma(-1),\gamma(0),\gamma(1),\ldots)$ in $\mathscr{G}$ such that $\gamma(0)=o$. The existence of $\gamma$ classically follows from considering a sequence of geodesics of length $2i$ starting at $o$, applying an automorphism to map the middle vertex of each geodesic to $o$, and then using a compactness argument. 

\begin{lemma}\label{lem:geodesic}
Let $\mathscr{I}$ be an interface such that $|\mathscr{I}|=n$. Then there exists $i$ such that $\gamma(i)\in \mathscr{I}$ and $0\leq i\leq t(n-1)$.
\end{lemma}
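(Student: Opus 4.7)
The first move is to dispose of the trivial case. If $o\in\mathscr{I}$, then $i=0$ works, so we may assume $o\notin\mathscr{I}$. By the definition of an interface, $o$ then lies in a \emph{finite} connected component $A$ of $\mathscr{G}\setminus\mathscr{I}$.

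Next, I would use the bi-infinite geodesic $\gamma$ to pin down two vertices of $\mathscr{I}$ on opposite sides of $o$. Since $A$ is finite and $\gamma$ tends to infinity in both directions, the forward ray $(\gamma(k))_{k\geq 0}$ must eventually leave $A$ through a vertex of $\mathscr{I}$, and likewise for the backward ray. Let $i^+\geq 0$ be the smallest index with $\gamma(i^+)\in\mathscr{I}$ and $i^-\leq 0$ the largest such index. Since $\gamma$ is a (bi-infinite) geodesic with $\gamma(0)=o$, we have $d_{\mathscr{G}}(\gamma(i^-),\gamma(i^+))=i^+-i^-\geq i^+$.

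The final ingredient is the diameter bound for $\mathscr{I}$ that comes from its $t$-connectedness. Any two vertices of $\mathscr{I}$ can be joined by a sequence of at most $n$ vertices of $\mathscr{I}$ with consecutive ones at graph distance at most $t$; concatenating the corresponding $\mathscr{G}$-paths gives $\mathrm{diam}_{\mathscr{G}}(\mathscr{I})\leq t(n-1)$. Applied to $\gamma(i^-),\gamma(i^+)\in\mathscr{I}$, this yields $i^+\leq i^+-i^-\leq t(n-1)$, and $i:=i^+$ is the required index.

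I do not expect a real obstacle here; the only point that deserves care is to ensure that \emph{both} tails of $\gamma$ actually hit $\mathscr{I}$, which is where the finiteness of the component containing $o$ (built into the definition of interface) is used. Beyond this, everything reduces to the elementary distance estimate for $t$-connected sets and to the defining property that $\gamma$ realises distances.
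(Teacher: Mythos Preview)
Your proof is correct and follows essentially the same route as the paper: use the interface definition to force the two rays of $\gamma$ to hit $\mathscr{I}$ at some $\gamma(i^+)$ and $\gamma(i^-)$, then bound $i^+ \leq i^+ - i^- = d_{\mathscr{G}}(\gamma(i^-),\gamma(i^+)) \leq t(n-1)$ via the $t$-connectedness of $\mathscr{I}$. Your write-up is in fact slightly cleaner than the paper's, which states the diameter bound as $d_{\mathscr{G}}(x^+,x^-)\le tn$ before jumping to the sharper conclusion $i\le t(n-1)$.
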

\begin{proof}
Recall that either $\mathscr{I}$ contains $o$ or $o$ lies in a finite component of its complement in $\mathscr{G}$. This implies that $\mathscr{I}$ intersects both infinite geodesic rays  $(\gamma(0),\gamma(1),\ldots)$ and $(\ldots,\gamma(-1),\gamma(0))$ at some vertices $x^+$ and $x^-$, respectively. Since $\mathscr{I}$ is $t$-coarse connected and $|\mathscr{I}|=n$, it follows that $d_{\mathscr{G}}(x^+,x^-)\leq tn$, which implies that $x^{+}=\gamma(i)$ for some $0\leq i\leq t(n-1)$.
\end{proof}

Our strategy for proving Proposition~\ref{prop: exp dec} is based on the observation that 
$$\sum_{\mathscr{M}: |\mathscr{M}|=n}\mathbb{P}_p(\mathscr{M} \text{ occurs at scale }N)=\mathbb{E}_p(\mathcal{N}_n),$$
where $\mathcal{N}_n$ is the (random) number of occurring multi-interfaces of size $n$. Using combinatorial arguments, we now obtain a deterministic sub-exponential upper bound on $\mathcal{N}_n$ which holds uniformly over all configurations $\omega$.

\begin{lemma}\label{lem:Hardy-Ram}
There is a constant $c>0$ such that for every $\mathscr{G}\in \mathfrak{G}$ and any configuration $\omega$ we have
$$\mathcal{N}_n\leq e^{c\sqrt{tn}}$$
for every $n\geq 1$.
\end{lemma}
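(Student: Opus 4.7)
The plan is to injectively encode each occurring multi-interface by a finite set of distinct non-negative integers with bounded sum, and then invoke the Hardy--Ramanujan asymptotic on the number of partitions into distinct parts (as the lemma name suggests).

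First I would fix the configuration $\omega$ and let $\mathcal{O}$ denote the collection of occurring interfaces in $\omega$. By Lemma~\ref{lem: disjoint}, the elements of $\mathcal{O}$ are pairwise disjoint, so an occurring multi-interface is precisely a non-empty subset of $\mathcal{O}$. To each $\mathscr{I} \in \mathcal{O}$, Lemma~\ref{lem:geodesic} assigns at least one index $i \in \{0, 1, \ldots, t(|\mathscr{I}|-1)\}$ with $\gamma(i) \in \mathscr{I}$; let $\iota(\mathscr{I})$ be the smallest such $i$. Pairwise disjointness of $\mathcal{O}$ makes $\iota$ injective, so the map
\[
\mathscr{M} \longmapsto S(\mathscr{M}) := \{\iota(\mathscr{I}) : \mathscr{I} \in \mathscr{M}\}
\]
is an injection from occurring multi-interfaces into the family of finite sets of distinct non-negative integers.

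Next I would exploit the size constraint. If $|\mathscr{M}| = n$ and $\mathscr{M} = \{\mathscr{I}_1, \ldots, \mathscr{I}_k\}$, then combining $\iota(\mathscr{I}_j) \le t(|\mathscr{I}_j| - 1)$ with $\sum_j |\mathscr{I}_j| = n$ yields
\[
\sum_{i \in S(\mathscr{M})} i \le t(n - k) \le tn.
\]
Hence $\mathcal{N}_n$ is at most the number of finite sets of distinct non-negative integers whose elements sum to at most $tn$. To conclude, let $q(M)$ denote the number of partitions of $M$ into distinct positive parts; the Hardy--Ramanujan asymptotic yields a universal constant $c_0$ with $q(M) \le e^{c_0 \sqrt{M}}$ for every $M \ge 0$ (with $q(0)=1$). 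A set of distinct non-negative integers summing to $M$ either is, or becomes after removing a possible element $0$, a partition of $M$ into distinct positive parts, so there are at most $2q(M)$ such sets. Summing gives
\[
\mathcal{N}_n \le 2 \sum_{M=0}^{tn} q(M) \le 2(tn+1) e^{c_0\sqrt{tn}} \le e^{c \sqrt{tn}}
\]
for a suitable constant $c$, uniformly in $\mathscr{G} \in \mathfrak{G}$ and $\omega$.

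The main obstacle I anticipate is choosing the right encoding. A naive strategy---summing over the multi-set of sizes (a partition of $n$) and, for each size $m$, bounding the number of placements of an interface of size $m$ on $\gamma$ by $tm+1$---produces an extra $\log(tn)$ factor in the exponent, yielding only $e^{c\sqrt{tn}\,\log(tn)}$. The crucial observation that kills this parasitic logarithm is that, within a fixed configuration $\omega$, an occurring interface is uniquely determined by any vertex of $\gamma$ it contains; consequently, the multi-interface is encoded by its root set alone, and the integer sum constraint on the roots is exactly the input needed to apply the Hardy--Ramanujan regime.
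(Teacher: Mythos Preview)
Your proof is correct and follows essentially the same approach as the paper: encode each occurring multi-interface by the set of geodesic indices where its constituent interfaces intersect $\gamma$ (using Lemma~\ref{lem:geodesic} and the disjointness from Lemma~\ref{lem: disjoint}), bound the sum of those indices by $tn$, and invoke the Hardy--Ramanujan partition bound. Your version is slightly more explicit in fixing $\iota(\mathscr{I})$ as the smallest admissible index and in tracking the possible $0$-element, but the argument is the same as the paper's.
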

\begin{proof}
Let $\mathscr{M}=\{\mathscr{I}_1,\mathscr{I}_2,\ldots,\mathscr{I}_k\}$ be a multi-interface that occurs in $\omega$ at scale $N$ and has size $n$. By Lemma~\ref{lem:geodesic}, there exist integers $i_1,i_2,\ldots,i_k$ such that $\gamma(i_j)\in \mathscr{I}_j$ and $0\leq i_j\leq t(|\mathscr{I}_j|-1)$. Since occurring interfaces are disjoint, it follows that the numbers $i_1,i_2,\ldots,i_k$ are all distinct. Hence, the set
$\{i_1,i_2,\ldots,i_k\}\setminus\{0\}$ forms a partition of an integer $r\leq tn$. Using a well-known result of Hardy and Ramanujan on the number of partitions \cite{HarRam}, we obtain that the number of all possible sets $\{i_1,i_2,\ldots,i_k\}$ produced in this way is at most $e^{c\sqrt{tn}}$ for some constant $c>0$. Since for each configuration $\omega$, the map 
$\{\mathscr{I}_1,\mathscr{I}_2,\ldots,\mathscr{I}_k\}\mapsto \{i_1,i_2,\ldots,i_k\}$ is injective, it follows that 
$$\mathcal{N}_n\leq e^{c\sqrt{tn}},$$
as desired.
\end{proof}

Using the above lemma, we now prove Proposition~\ref{prop: exp dec}.

\begin{proof}[Proof of Proposition~\ref{prop: exp dec}]
Consider $\mathscr{G}\in \mathfrak{G}$. By Theorem~\ref{thm:finitary-timar}, we can pick $t>0$ be such that $\weak(\mathscr{H})\leq t$ for every $\mathscr{H}$ in some neighbourhood of $\mathscr{G}$ within $\mathfrak{G}$. Let us also fix some $N\geq t$ that will be chosen along the way to be large enough depending only on $p$ and $\mathscr{G}$. We first prove the assertion for fixed $p>p_c(\mathscr{G})$ and for any $\mathscr{H}$ satisfying $R(\mathscr{H},\mathscr{G})\geq k$ for some $k$ to be determined. In particular, we let $k$ be large enough so that $\weak(\mathscr{H})\leq t$. The uniformity in $p$ will then follow by a continuity argument.

By Lemma~\ref{lem:Hardy-Ram}, we have 
\begin{equation}\label{eq:expectation-probability}
\mathbb{E}_{p,\mathscr{H}}(\mathcal{N}_n)\leq e^{c\sqrt{tn}}\mathbb{P}_{p,\mathscr{H}}(\mathcal{A}_n),    
\end{equation}
where $\mathcal{A}_n:=\{\text{there exists at least one multi-interface $\mathscr{M}$ of size $n$ that occurs}\}$, hence it suffices to show that the latter probability decays exponentially. To this end, we will argue as in the proof of Proposition~\ref{prop: finite}.

Let $r=10N+1$, and recall that $\mathscr{H}^{(i)}$ is the graph with vertex set $V$, where we draw an edge between $u$ and $v$ if $1\leq d_{\mathscr{H}}(u,v)\leq i$. Recall also that a set of vertices $A$ is $2r$-separated if $d_{\mathscr{G}}(u,v)>2r$ for all distinct $u,v \in A$. Given a multi-interface $\mathscr{M}=\{\mathscr{I}_1,\mathscr{I}_2,\ldots,\mathscr{I}_k\}$ of size $n$, let $S_1,S_2,\ldots,S_{\ell}$ be the connected components of $\bigcup_{j=1}^k\mathscr{I}_j$ in the graph $\mathscr{H}^{(2r)}$, i.e.\ each $S_j$ is $2r$-coarse connected, and is maximal for inclusion with respect to this property. Now, for each $S_j$, let $S'_j\subset S_j$ be a $2r$-separated set that is maximal for inclusion with respect to this property. Then $S':=\bigcup_{j=1}^{\ell} S'_j$ is $2r$-separated, and 
\begin{equation}\label{eq: size lower bound}
|S'|\geq \frac{|\mathscr{M}|}{|B_{2r,\mathscr{H}}|}=\frac{n}{|B_{2r,\mathscr{H}}|}.
\end{equation}
By arguing as in the proof of Proposition~\ref{prop: finite}, we see that each $S'_j$ is $6r$-coarse connected.

We now estimate the number of all possible $S'$. 
To this end, we first wish to bound $\ell$, which we recall is the number of components of $S'$. As $2r\geq t$, we have the inequality $\ell\leq k$. To estimate $k$, for each $i\le k$, let $m_i\ge 0$ be such that $\gamma(m_i)\in \mathscr{I}_i$ and $m_i\leq t(|\mathscr{I}_i|-1)$. Such $m_i$ indeed exist by Lemma~\ref{lem:geodesic}. Up to relabelling the interfaces of $\mathscr{M}$ we can assume that $m_1<m_2<\ldots<m_k$. Then $m_i\geq i-1$, which implies that $|\mathscr{I}_i|\geq i/t$. Hence 
\[
n=\sum_{i=1}^k |\mathscr{I}_i|\geq \sum_{i=1}^k \frac{i}{t}=\frac{k(k+1)}{2t}\geq \frac{k^2}{2t},
\]
which in turn implies that $\ell\leq k\leq \sqrt{2tn}$.

Using Lemma~\ref{lem:geodesic} again and the maximality in the definition of $S'_j$, we deduce that each $S'_j$ contains a vertex $x$ such that $d_{\mathscr{H}}(x,\gamma(i))\leq 2r$ for some $0\leq i\leq t(n-1)$. Hence there are at most $tn|B_{2r,\mathscr{H}}|$ choices for $x$. Combining the latter with \eqref{eq: size lower bound}, \cite[Lemma 5.1]{KestenBook}, and using that the number of partitions of $m$ is at most $e^{c\sqrt{m}}$, we obtain that there are at most
$$
\sum_{k=1}^{\sqrt{2tn}} (t n|B_{2r,\mathscr{H}}|)^k e^{c\sqrt{m}}|B_{6r,\mathscr{H}}|^m e^m\leq \sqrt{2tn}\, (t n|B_{2r,\mathscr{H}}|)^{\sqrt{2tn}} e^{c\sqrt{m}} |B_{6r,\mathscr{H}}|^m e^m
$$
possibilities for the number of all possible $S'$ of cardinality $m$. A union bound and independence now gives
\begin{equation}\label{eq:bounds 3}
\mathbb{P}_{p,\mathscr{H}}(\mathcal{A}_n)\leq \sqrt{2tn}\, (t n|B_{2r,\mathscr{H}}|)^{\sqrt{2tn}} \sum_{m\geq n|B_{2r,\mathscr{H}}|^{-1}}  |B_{6r,\mathscr{H}}|^m e^{c\sqrt{m}} e^m \, \mathbb{P}_{p,\mathscr{H}}(o \text{ is N-bad})^m.
\end{equation}
Writing $D$ for the (common) vertex-degree, we now choose $N$ to be large enough so that 
$$
|B_{6r,\mathscr{G}}| e \, \mathbb{P}_{p,\mathscr{G}}(o \text{ is N-bad})\leq |B_{6r,\mathscr{G}}| e (D+1) e^{-\sqrt{N}}:=\alpha<1,
$$
which is possible by the polynomial growth of $\mathscr{G}$ and \eqref{eq: local uniqueness}. We choose in turn $k$ to be large enough so that $k\geq 6r$. Then we have $\mathbb{P}_{p,\mathscr{H}}(o \text{ is N-bad})=\mathbb{P}_{p,\mathscr{G}}(o \text{ is N-bad})$ and
$$
|B_{6r,\mathscr{H}}| e \, \mathbb{P}_{p,\mathscr{H}}(o \text{ is N-bad})\leq \alpha.
$$
Thus, we see that the right-hand side of \eqref{eq:bounds 3} decays exponentially in $n$, uniformly over $\mathscr{H}$, as desired.

Uniformity in both $p$ and $\mathscr{H}$ now follows from continuity of the fucntion $p\mapsto\mathbb{P}_{p,\mathscr{G}}(o \text{ is N-bad})$. This completes the proof.
\end{proof}

We are now ready to prove Theorem~\ref{thm:analyticity-poly}. 

\begin{proof}[Proof of Theorem~\ref{thm:analyticity-poly}]
Let $p_0>p_c(\mathscr{G})$ and consider some $N$ and $\varepsilon$ as in Corollary~\ref{cor: finite} and Proposition~\ref{prop: exp dec}. Then we have $$1-\theta(p)=\mathbb{P}_p(D_N)+\mathbb{P}_p(D_N^c,\text{ an interface occurs at scale }N)$$
for every $p\in (p_0-\varepsilon,p_0+\varepsilon)\cap[0,1]$, where $D_N=\{\mathrm{diam}(\mathcal{C}_o)\leq 20N\}$. The function $p\mapsto\mathbb{P}_p(D_N)$ is a polynomial in $p$, hence analytic. For the second term, we use the inclusion-exclusion principle to obtain 
$$\mathbb{P}_p(D_N^c,\text{ an interface occurs at scale }N)=\sum_{\mathscr{M}=\{\mathscr{I}_1,\ldots, \mathscr{I}_k\}}(-1)^{k+1}\mathbb{P}_p(D_N^c, \text{ $\mathscr{M}$ occurs at scale }N).
$$
Each summand is a polynomial in $p$, which allows us to formally extend the parameter $p$ to the complex plane. We show that this series converges uniformly in a complex neighbourhood $\mathscr{O}$ of $p_0$, which classically implies that the series is an analytic function in the interior of $\mathscr{O}$. Then, as it coincides with $\mathbb{P}_p(D_N^c,\text{ an interface occurs at scale }N)$ on $\mathscr{O}\cap [0,1]$, it follows that $\mathbb{P}_p(D_N^c,\text{ an interface occurs at scale }N)$ admits an analytic extension on $\mathscr{O}$, hence so does $\theta$. 

To this end, we compare the value of the sum at a complex parameter $z$ with its value at a point close to $p_0$ by estimating the degree of each polynomial term. Note that the event $D_N^c$ depends only on the state of the edges with both endpoints in $B_{20N+1,\mathscr{G}}$, and the event $\{\mathscr{I}_i \textup{ occurs}\}$ depends only on the state of the edges with both endpoints in $\bigcup_{x\in \mathscr{I}_i} B_{20(t+1)N+1,\mathscr{G}}(x)$. Thus, for a multi-interface $\mathscr{M}=\{\mathscr{I}_1,\mathscr{I}_2,\ldots, \mathscr{I}_k\}$ of size $n$, the event $\{D_N^c,\text{ $\mathscr{M}$ occurs}\}$ depends on the state of at most $An$ edges for some constant $A>0$ that only depends on $t$ and the cardinality of $B_{20(t+1)N+1,\mathscr{G}}$. Let us introduce
\[
F_n(p):=\sum_{\substack{\mathscr{M}=\{\mathscr{I}_1,\mathscr{I}_2,\ldots, \mathscr{I}_k\}\\ |\mathscr{M}|=n}}(-1)^{k+1}\mathbb{P}_{p}(D_N^c,\text{ $\mathscr{M}$ occurs}).
\]
As the function $F_n$ is polynomial in $p$ on $[0,1]$, we can make sense of $F_n(z)$ for all complex $z$, even though the interpretation on the right-hand side ceases to be valid. For every multi-interface $\mathscr{M}$, we can partition the event $\{D_N^c,\text{ $\mathscr{M}$ occurs}\}$ into finitely many events $\mathcal{E}_{\mathscr{M},i}$ of the form $\{\text{all edges in $O_{\mathscr{M},i}$ are open and all edges in $C_{\mathscr{M},i}$ are closed}\}$ with $O_{\mathscr{M},i}$ and $C_{\mathscr{M},i}$ edge-sets of cardinality at most $An$.  We set $G_{\mathscr{M},i}(z):=z^{|O_{\mathscr{M},i}|}(1-z)^{|C_{\mathscr{M},i}|}$ and observe that, for $p\in[0,1]$, we have $G_{\mathscr{M},i}(p)=\mathbb{P}_p(\mathcal{E}_{\mathscr{M},i})$.

By the triangle inequality, for every complex number $z$, we have
\[
|F_n(z)|\le\sum_{\mathscr{M}\,:\,|\mathscr{M}|=n}\sum_i |G_{\mathscr{M},i}(z)|.
\]
Letting $r<p_0$ and $a(r):=A\log(\frac{p_0+r}{p_0-r})$, we can now apply \cite[Corollary 4.3]{analyticity} to get that for every $z$ satisfying $|z-p_0|\le r$, for all $\mathscr{M}$ and $i$, we have $|G_{\mathscr{M},i}(z)|\le e^{a(r)n}\mathbb{P}_{p_0-r}(\mathcal{E}_{\mathscr{M},i})$. Therefore, for every complex number $z$ satisfying $|z-p_0|\le r$, we have
\[
|F_n(z)|\le e^{a(r)n}\sum_{\mathscr{M}\,:\,|\mathscr{M}|=n}\sum_i \mathbb{P}_{p_0-r}(\mathcal{E}_{\mathscr{M},i})\,=\,e^{a(r)n} \sum_{\substack{\mathscr{M}~:~ |\mathscr{M}|=n}}\mathbb{P}_{p_0-r}(D_N^c,\text{ $\mathscr{M}$ occurs}).
\]

Observe that $a(r)$ converges to $0$ as $r$ tends to $0$. By Proposition~\ref{prop: exp dec}, we have
\[
\sum_{\substack{\mathscr{M}~:~ |\mathscr{M}|=n}}\mathbb{P}_{p_0-r}(D_N^c,\text{ $\mathscr{M}$ occurs})\le C e^{-cn},
\]
provided that $r<\varepsilon$. We can find $r>0$ small enough so that $a(r)\leq c/2$, thus obtaining that 
\begin{equation}\label{eq:exp dec}
\left\lvert F_n(z)\right\rvert\leq Ce^{-cn/2}    
\end{equation}
for every $|z-p_0|\leq r$. Therefore, the inclusion-exclusion expansion $\sum_{n\geq 1}F_n(z)$ converges uniformly in the disk $\{z\in \mathbb{C}~:~ |z-p_0|\leq r\}$, which implies that it is analytic in its interior, so that $\theta$ admits an analytic extension to this set. 
\end{proof}

We now proceed to the proof of Theorem~\ref{thm:analyticity-local}.

\begin{proof}[Proof of Theorem~\ref{thm:analyticity-local}]
Let $\mathscr{G}\in \mathfrak{G}$ and consider $p_0>p_c(\mathscr{G})$ and a large enough constant $k_0>0$. Consider $\mathscr{H}\in \mathfrak{G}$ such that $R(\mathscr{G},\mathscr{H})\geq k_0$. The value of $k_0$ will be chosen in such a way that $\theta_{\mathscr{H}}$ is analytic in $[p_0,1]$.

By Theorem~\ref{thm:finitary-timar}, $k_0$ can be chosen to be large enough so that $\weak(\mathscr{H})\le t$ for some constant $t>0$ depending only on $\mathscr{G}$. Having fixed the value of $t$, choose $N$, $\varepsilon$, and $r$ as in the proof of Theorem~\ref{thm:analyticity-poly} (the value of these constants depends in general on the value of $t$, but in light of Proposition~\ref{prop: exp dec} it does not depend on $k_0$). By increasing the value of $k_0$ if necessary, we can further assume that $k_0\geq 2t+10(t+1)N$. Then the formula
\begin{equation}\label{eq:theta_h expansion}
1-\theta_{\mathscr{H}}(p)=\mathbb{P}_{p,\mathscr{H}}(D_N)+\mathbb{P}_{p,\mathscr{H}}(D_N^c,\text{ an interface occurs})
\end{equation}
holds for every $p\in (p_0-\varepsilon,p_0+\varepsilon)\cap [0,1]$. Furthermore, $\mathbb{P}_{p,\mathscr{H}}(D_N)$ is a polynomial in $p$, hence analytic. To deduce that $\theta_{\mathscr{H}}$ is analytic on a neighbourhood of $p_0$, we use the inclusion-exclusion principle as in the proof of Theorem~\ref{thm:analyticity-poly} to expand $\mathbb{P}_{p,\mathscr{H}}(D_N^c,\text{ an interface occurs})$ as a series of polynomials, and then note that with our choice of constants, inequality \eqref{eq:exp dec} holds, hence the series converges uniformly in the disk $\{z\in \mathbb{C}~:~ |z-p_0|\leq r\}$ and admits an analytic extension there. By compactness of the interval $[p_0,1]$, we can find a complex neighbourhood $U$ of $[p_0,1]$ where both $\theta_{\mathscr{H}}$ and $\theta_{\mathscr{G}}$ admit analytic extensions $f_{\mathscr{H}}$ and $f_{\mathscr{G}}$, respectively, and $U$ can be chosen uniformly over all $\mathscr{H}$ as above. 

Now we proceed to show that in a complex neighbourhood of $p_0$, the quantity $\|f_{\mathscr{G}}-f_{\mathscr{H}}\|_\infty$ decays exponentially in $R(\mathscr{G},\mathscr{H})$. Consider $k\geq k_0$ and some $\mathscr{H}\in \mathfrak{G}$ such that $R(\mathscr{G},\mathscr{H})\geq k$. Let $m\geq 1$ be such that $k\geq 2tm+20(t+1)N$. To derive an estimate on $f_{\mathscr{G}}-f_{\mathscr{H}}$, we use \eqref{eq:theta_h expansion} and the fact that $\mathbb{P}_{p,\mathscr{H}}(D_N)=\mathbb{P}_{p,\mathscr{G}}(D_N)$ by our choice of $k_0$ to obtain
$$\theta_{\mathscr{G}}(p)-\theta_{\mathscr{H}}(p)=\mathbb{P}_{p,\mathscr{H}}(D_N^c,\text{ an interface occurs})-\mathbb{P}_{p,\mathscr{G}}(D_N^c,\text{ an interface occurs}),$$ which holds for every $p\in (p_0-\varepsilon,p_0+\varepsilon)\cap [0,1]$. Thus, it remains to analyse the latter difference.

To this end, we again expand both probabilities using the inclusion-exclusion principle. Consider a multi-interface $\mathscr{M}=\{\mathscr{I}_1,\ldots,\mathscr{I}_k\}$ of size $n\leq m$ in $\mathscr{H}$. Note that $\mathscr{M}$ is contained in the ball of radius $tn-1+t(n-1)\leq 2tn-1$ by Lemma~\ref{lem:geodesic} and $t$-connectivity. Hence the event $\{D_N^c,\text{ $\mathscr{M}$ occurs}\}$ depends only on the state of the edges in the ball of radius $2tn+20(t+1)N$ centred at $o$. 
By our assumption on $R(\mathscr{G},\mathscr{H})$,
$$\sum_{\substack{\mathscr{M}: |\mathscr{M}|\leq m}}(-1)^{i+1}\mathbb{P}_{p,\mathscr{H}}(D_N^c,\text{ $\mathscr{M}$ occurs})=\sum_{\mathscr{M}: |\mathscr{M}|\leq m}(-1)^{i+1}\mathbb{P}_{p,\mathscr{G}}(D_N^c,\text{ $\mathscr{M}$ occurs}),$$
where here we abuse notation and denote both $\mathscr{M}$ and its isomorphic image in $\mathscr{G}$ by the same symbol. 
It follows now from \eqref{eq:exp dec} and the triangle inequality that 
$$|f_{\mathscr{G}}(z)-f_{\mathscr{G}}(z)|\leq 2C\sum_{i>m}e^{-ci/2}$$
for every $|z-p_0|\leq r$, with $r>0$ is chosen as in \eqref{eq:exp dec}. This concludes the proof.
\end{proof}

\section{Strong probabilistic estimates in the supercritical regime}
\label{sec:strong-proba-estimates}

In this section, we prove Theorem~\ref{thm:sharpness-local} and its analogue for the cluster size.

\hypertarget{target:sharpness}{
\subsection{Supercritical sharpness}}

Our aim is to prove the local version of supercritical sharpness stated in Theorem~\ref{thm:sharpness-local}. Let us point out that subcritical sharpness, namely the main result of \cite{duminil2016new}, can easily be made local for all transitive graphs. Indeed, \cite{duminil2016new} characterises subcriticality via the finite-size criterion ``$\varphi_p(S)<1$''. 

\begin{proof}[Proof of Theorem~\ref{thm:sharpness-local}]
Let $p_0\in (p_c(\mathscr{G}),1]$. Let us first show that $p_c(\mathscr{H})<p_0$ for every $\mathscr{H}\in \mathfrak{G}$ in a neighbourhood of $\mathscr{G}$. To this end, we apply Theorem~\ref{thm:analyticity-local} to get that for some $r>0$ and $\varepsilon>0$ we have $\theta_{\mathscr{H}}(p)\geq \varepsilon$ for every $\mathscr{H}\in \mathfrak{G}$ with $R(\mathscr{H},\mathscr{G})\geq r$ and every $p\in (p_0-\varepsilon,p_0+\varepsilon)\cap [0,1]$. It follows that $p_c(\mathscr{H})<p_0$. Alternatively, we could use \cite{cmtlocality}.

Let us now prove the second part of the statement. It suffices to establish the desired bound for $p$ in an open neighbourhood of $p_0$, as then one can use compactness of the interval $[p_0,1]$ to conclude. Recall Proposition~\ref{prop: exp dec}, and note that it implies the following bound: there exist $N,r,c',\varepsilon>0$ such that for every $\mathscr{H}\in \mathfrak{G}$ with $R(\mathscr{H},\mathscr{G})\geq r$ we have
\begin{equation}\label{eq:bad exp decay}
\mathbb{P}_{p,\mathscr{H}}(|\partial \mathcal{C}_x^{\mathrm{bad}}(N)|\geq n)\leq e^{-c'n}    
\end{equation}
for every $p\in (p_0-\varepsilon,p_0+\varepsilon)\cap[0,1]$ and every $n\ge1$. Here we used that $\mathbb{P}_{p,\mathscr{H}}(|\partial \mathcal{C}_x^{\mathrm{bad}}(N)|\geq n)<1$ and the measurability of the event $\{|\partial\mathcal{C}_x^{\mathrm{bad}}(N)|\geq n\}$ with respect to a ball of bounded radius to remove the prefactor $C>0$ of Proposition~\ref{prop: exp dec}. Now notice that on the event $\{u\longleftrightarrow v \centernot\longleftrightarrow \infty\}$ we have $20N<\mathrm{diam}(\mathcal{C}_u)<\infty$, as long as $d_\mathscr{H}(u,v)>20N$. This in turn implies that $\partial\mathcal{C}_u(N)$ is a finite minimal cutset that disconnects $\mathcal{C}_u$ from infinity. Applying \cite[Lemma 2.3]{supercritpoly} we obtain that $|\partial\mathcal{C}_u(N)|\geq \frac{\mathrm{diam}(\mathcal{C}_u)}{2}$, hence $|\partial \mathcal{C}_u(N)|\geq \frac{d_\mathscr{H}(u,v)}{2}$. Thus, we can apply \eqref{eq:bad exp decay}, Proposition~\ref{prop: bad vert}, Lemma~\ref{lem:geodesic} and a union bound to deduce that
$$\mathbb{P}_{p,\mathscr{H}}(u\longleftrightarrow v \centernot\longleftrightarrow \infty) \le \sum_{n\geq \frac{d_\mathscr{H}(u,v)}{2}}tne^{-c'n}$$
for every $u,v$ that satisfy $d_\mathscr{H}(u,v)>20N$. Thus, $\mathbb{P}_{p,\mathscr{H}}(u\longleftrightarrow v \centernot\longleftrightarrow \infty) \le \exp(-c' d_\mathscr{H}(u,v)/4)$, whenever $d_\mathscr{H}(u,v)$ is large enough. To handle small distances, we use that the event $\{u\longleftrightarrow v \centernot\longleftrightarrow \infty\}$ has probability bounded away from $1$, uniformly over $\mathscr{H}$ in a neighbourhood of $\mathscr{G}$, since we can disconnect $u$ and $v$ by simply closing all edges incident to $u$. This completes the proof.
\end{proof}

\begin{remark}
    Getting an exponential lower bound is easy and well known. Given $\varepsilon>0$ and a transitive graph $\mathscr{H}$, there is a positive constant $c''$ depending only on $\varepsilon$ and the degree of $\mathscr{H}$ such that \[\forall p\in[\varepsilon,1-\varepsilon],\qquad\mathbb{P}_{p,\mathscr{H}}(u\longleftrightarrow v \centernot \longleftrightarrow\infty)\geq e^{-c''d_\mathscr{H}(u,v)}.\] Notice that the degree of $\mathscr{H}$ equals that of $\mathscr{G}$ as soon as we assume $R(\mathscr{G},\mathscr{H})\ge1$. To obtain such a bound, it suffices to open a geodesic path $\gamma$ of length $d_\mathscr{H}(u,v)$ from $u$ to $v$, and close all other edges with at least one endpoint in $\gamma$. 
\end{remark}

\subsection{Probability that a cluster is large and finite}
\label{sec:large-cluster}

In this section, our aim is to handle the decay of the probability that the cluster of the origin is large and finite in the supercritical regime. In the case of the hypercubic lattice $\mathbb{Z}^d$, it is classical that the probability of the latter event decays stretched exponentially with exponent equal to $\frac{d-1}{d}$; see \cite{kesten1990probability}. This result can be extended to all $d$-dimensional transitive graphs, where now the dimension $d(\mathscr{G})$ of a graph $\mathscr{G}$ of polynomial growth is defined to be the smallest exponent $d$ such that $|B_n|\leq Cn^d$.

In general, two graphs of polynomial growth might not have the same dimension, regardless of how close they are. For this reason, probabilities of the form $\mathbb{P}_{p,\mathscr{G}}(k<|\mathcal{C}_o|<\infty)$ do not have the same order of stretched exponential decay as one varies the underlying graph $\mathscr{G}$. Nevertheless, we will show that the rate $c$ of stretched exponential decay can be chosen to be uniform.

\begin{theorem}\label{thm:cluster size}
Let $\mathscr{G}\in\mathfrak{G}$ and $p_0\in (p_c(\mathscr{G}),1]$. Then, there is a constant $c>0$ such that the set of all $\mathscr{H}\in\mathfrak{G}$ satisfying $p_c(\mathscr{H})<p_0$ and the following condition for all $k$ is a neighbourhood of the graph $\mathscr{G}$:
    \[ \forall p\in [p_0,1], \quad
\mathbb{P}_{p,\mathscr{H}}(k<|\mathcal{C}_o|<\infty)\le \exp\left(-ck^{\frac{d(\mathscr{H})-1}{d(\mathscr{H})}}\right).
    \]
\end{theorem}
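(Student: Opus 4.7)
The plan is to reproduce the structure of the proof of Theorem~\ref{thm:sharpness-local}, but to replace the diameter bound $|\partial\mathcal{C}_o(N)|\geq\mathrm{diam}(\mathcal{C}_o)/2$ coming from \cite[Lemma~2.3]{supercritpoly} by a volumic isoperimetric inequality. By compactness of $[p_0,1]$, it suffices to establish the estimate on a small neighbourhood of $p_0$. As extracted from Proposition~\ref{prop: exp dec} in the proof of Theorem~\ref{thm:sharpness-local}, there exist constants $N,c'>0$, a value $\varepsilon>0$, and a neighbourhood of $\mathscr{G}$ on which
\[
\mathbb{P}_{p,\mathscr{H}}\bigl(|\mathcal{C}_x^{\mathrm{bad}}(N)|\geq n\bigr)\leq e^{-c'n}
\]
uniformly in $p\in(p_0-\varepsilon,p_0+\varepsilon)\cap[0,1]$ and $n\geq 1$.

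The key new input I need is a uniform isoperimetric inequality: there exist $c_*>0$ and a neighbourhood of $\mathscr{G}$ in $\mathfrak{G}$ such that, for every $\mathscr{H}$ in this neighbourhood and every finite $A\subset V(\mathscr{H})$ with connected complement and $|A|$ large enough,
\[
|\partial A|\geq c_*\,|A|^{(d(\mathscr{H})-1)/d(\mathscr{H})},
\]
where under the connected-complement assumption $\partial A$ agrees with the inner vertex-boundary. For a fixed graph of polynomial growth this is a classical consequence of the Coulhon--Saloff-Coste-type isoperimetric profile and underlies \cite[Theorem~1.2]{supercritpoly}. The uniformity across nearby $\mathscr{H}$ should follow by the same scheme as the proof of Theorem~\ref{thm:loc-bded}: the finitary Trofimov theorem \cite{tesseratointon, tesseratointonnew} reduces matters to a Cayley graph of a nilpotent group with uniformly controlled parameters, after which the isoperimetric inequality transfers back to $\mathscr{H}$ via the quasi-isometry invariance of the $L^1$-isoperimetric profile, the dimension $d(\mathscr{H})$ being read off from the nilpotent quotient. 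This is the step I expect to require the most care, since the constant $c_*$ must be chosen independently of $\mathscr{H}$.

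Granted this, the argument concludes exactly as in the proof of Theorem~\ref{thm:sharpness-local}. Take $k$ large enough that $|\mathcal{C}_o|\geq k$ forces $\mathrm{diam}(\mathcal{C}_o)>20N$ (by polynomial growth), and let $\tilde{\mathcal{C}}_o(N)$ be the union of $\mathcal{C}_o(N)$ with all finite connected components of $\mathscr{H}\setminus \mathcal{C}_o(N)$. Then $V(\mathscr{H})\setminus\tilde{\mathcal{C}}_o(N)$ is the unique infinite connected component of $\mathscr{H}\setminus \mathcal{C}_o(N)$, so the inner vertex-boundary of $\tilde{\mathcal{C}}_o(N)$ coincides with $\partial\mathcal{C}_o(N)$. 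Since $\mathcal{C}_o\subset\tilde{\mathcal{C}}_o(N)$, the uniform isoperimetric inequality yields $|\partial\mathcal{C}_o(N)|\geq c_*\, k^{(d(\mathscr{H})-1)/d(\mathscr{H})}$. Proposition~\ref{prop: bad vert} ensures that every vertex of $\partial\mathcal{C}_o(N)$ is $N$-bad, and Theorem~\ref{thm:finitary-timar} gives a uniform $t$ for which $\partial\mathcal{C}_o(N)$ is $t$-connected, making it an interface in the sense of Definition~\ref{defi:interface}. Lemma~\ref{lem:geodesic} then yields an index $0\leq i\leq t\,|\partial\mathcal{C}_o(N)|$ with $\gamma(i)\in\partial\mathcal{C}_o(N)$, whence $|\mathcal{C}_{\gamma(i)}^{\mathrm{bad}}(N)|\geq|\partial\mathcal{C}_o(N)|$. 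A union bound over $i$, together with transitivity and the uniform exponential decay, gives
\[
\mathbb{P}_{p,\mathscr{H}}(k<|\mathcal{C}_o|<\infty)\leq \sum_{n\geq c_*\, k^{(d(\mathscr{H})-1)/d(\mathscr{H})}} t n\,e^{-c'n}\leq \exp\bigl(-c\,k^{(d(\mathscr{H})-1)/d(\mathscr{H})}\bigr)
\]
for some $c>0$ and all $k$ sufficiently large; small $k$ are absorbed by decreasing $c$ so that the bound becomes trivial.
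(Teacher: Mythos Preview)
Your proposal is correct and follows essentially the same line as the paper's proof: reduce to an isoperimetric lower bound on $|\partial\mathcal{C}_o(N)|$ in terms of $|\mathcal{C}_o|$, then feed this into the exponential decay of bad clusters exactly as in the proof of Theorem~\ref{thm:sharpness-local}.

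The one place where you take a detour is the uniform isoperimetric inequality. You propose to run the full finitary-Trofimov scheme of Section~\ref{sec:loc-bded} and transfer the profile via quasi-isometry invariance. The paper instead proves a standalone statement (Proposition~\ref{prop:isoperimetric inequality}): for every $d$, there is a universal $\varepsilon(d)>0$ such that \emph{every} $d$-dimensional transitive graph satisfies $|\partial_V A|\geq \varepsilon(d)|A|^{(d-1)/d}$. This comes directly from the uniform ball-growth lower bound of \cite[Corollary~1.5]{tesseratointon} plugged into \cite[Proposition~5.1]{tessera2020sharp}. Then \cite[Corollary~1.4]{tesseratointon} bounds $d(\mathscr{H})$ uniformly on a neighbourhood of $\mathscr{G}$, so $c_*=\min_{d}\varepsilon(d)$ over the finitely many relevant dimensions does the job. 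This is shorter than your route and avoids the bookkeeping you flagged as ``requiring the most care''; otherwise the two arguments coincide.
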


In order to prove the above theorem, we need the following result. We expect this result to be known to the experts. We include a proof, citing relevant works, for convenience of the reader. Recall that $\partial_V A$ denotes the set of vertices that do not belong to $A$ but are adjacent to an element of $A$.

\begin{proposition}\label{prop:isoperimetric inequality}
Let $d\geq 1$. There exists $\varepsilon=\varepsilon(d)>0$ such that for every $d$-dimensional transitive graph, we have $|\partial_V A|\geq \varepsilon |A|^{\frac{d-1}{d}}$ for every finite set of vertices $A$.
\end{proposition}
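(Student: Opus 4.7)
The plan is to combine the classical Coulhon--Saloff-Coste isoperimetric inequality for vertex-transitive graphs with a uniform polynomial lower bound on the volume growth of $d$-dimensional transitive graphs.

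First, I would invoke the Coulhon--Saloff-Coste inequality, which states that for every vertex-transitive graph $\mathscr{G}$ and every non-empty finite $A \subset V(\mathscr{G})$,
\[
|\partial_V A|\,\ge\,\frac{|A|}{2\,\phi_{\mathscr{G}}^{-1}(2|A|)},
\]
where $\phi_{\mathscr{G}}(n):=|B_n|$ and $\phi_{\mathscr{G}}^{-1}(k):=\inf\{n\in\mathbb{N}:\phi_\mathscr{G}(n)\ge k\}$. This inequality is tailor-made to convert lower bounds on volume growth into isoperimetric inequalities: any bound of the form $|B_n|\ge c\,n^d$ forces $\phi_\mathscr{G}^{-1}(k)\le(k/c)^{1/d}$, which, plugged into the above, yields
\[
|\partial_V A|\,\ge\,\frac{|A|}{2(2|A|/c)^{1/d}}\,=\,\frac{c^{1/d}}{2^{1+1/d}}\,|A|^{(d-1)/d}.
\]

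The second ingredient is therefore a uniform polynomial lower bound on volume growth: I would aim to show that there is a constant $c=c(d)>0$, depending only on $d$, such that every $d$-dimensional transitive graph $\mathscr{G}$ satisfies $|B_n|\ge c\,n^d$ for all $n\ge 1$. For this I would go via Trofimov's structure theorem, or its finitary refinement by Tessera--Tointon already used in Section~\ref{sec:connecticut}, to reduce to Cayley graphs of finitely generated nilpotent groups. On such graphs, the Bass--Guivarc'h formula yields the precise polynomial order of the growth, and the constant can be quantified in terms of algebraic data ($d$, the nilpotency class, and the rank and structure of the generating set, all of which are controlled by $d$). Uniformity across the class of $d$-dimensional transitive graphs can then be extracted by a compactness argument in the local topology, noting that any limit still has growth of the appropriate order $n^d$.

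The hard part will be this uniformity of the growth lower bound. A priori, different $d$-dimensional transitive graphs may display very different multiplicative constants in their asymptotic volume $|B_n|\sim c_{\mathscr{G}}\, n^d$, so the structural input of Trofimov/Tessera--Tointon is essential to rule out degenerate behaviour. Once this is in hand, the Coulhon--Saloff-Coste inequality immediately delivers the claimed bound with $\varepsilon(d):=c(d)^{1/d}\cdot 2^{-1-1/d}$.
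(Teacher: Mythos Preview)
Your two-step architecture---uniform polynomial lower bound on the volume growth, then an isoperimetric inequality \`a la Coulhon--Saloff-Coste---is exactly the paper's approach. The paper cites \cite[Proposition~5.1]{tessera2020sharp} for the second step, which is in the same spirit as the Coulhon--Saloff-Coste inequality you invoke, so there is no real divergence there.

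The gap is in your derivation of the uniform growth lower bound $|B_r|\ge c(d)\,r^d$. The compactness argument you propose does not work as stated: the class of $d$-dimensional transitive graphs is not compact in the local topology (degrees are unbounded---take $\mathbb{Z}^d$ with ever larger generating sets), and even restricting to bounded degree, local convergence only controls balls of each fixed radius, not growth at all scales simultaneously. If you assume the bound fails along a sequence $(\mathscr{G}_n,r_n)$, nothing prevents $r_n\to\infty$, and then information about any limit graph $\mathscr{G}$ tells you nothing about $|B_{r_n}(\mathscr{G}_n)|$. Your alternative route via the structure theorem plus Bass--Guivarc'h is also incomplete: you would need the quasi-isometry constants of Theorem~2.3 in \cite{tesseratointon} to be bounded in terms of $d$ alone (not, say, the degree), and this is precisely the quantitative content that is not automatic.

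The paper bypasses all of this by citing \cite[Corollary~1.5]{tesseratointon} directly: applied with parameter $d-1$, it furnishes a constant $C=C(d)$ such that any transitive graph with $|B_r|\le r^d/C$ for some $r\ge C$ must in fact have dimension at most $d-1$; by contraposition, every $d$-dimensional transitive graph satisfies $|B_r|>r^d/C$ for all $r\ge C$. This is a genuine quantitative input, and you should cite it rather than attempt to rederive it.
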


\begin{proof}
We apply \cite[Corollary 1.5]{tesseratointon} for dimension $d-1$, which provides a uniform constant $C=C(d-1)>0$ such that every $d$-dimensional transitive graph $\mathscr{G}$ satisfies $|B_r|> r^{d}/C$ for every $r\geq C$, because otherwise $\mathscr{G}$ would be of dimension at most $d-1$. By increasing the value of $C$, we can make this conclusion to hold for all $r\ge 1$. Then the desired result follows from \cite[Proposition 5.1]{tessera2020sharp}.
\end{proof}

We now prove Theorem~\ref{thm:cluster size}.

\begin{proof}[Proof of Theorem~\ref{thm:cluster size}]
Let $\mathscr{G}\in \mathfrak{G}$ and recall \eqref{eq:bad exp decay}. Arguing as in the proof of Theorem~\ref{thm:sharpness-local}, we see that it suffices to prove that there exists $\varepsilon>0$ such that for all $\mathscr{H}\in \mathfrak{G}$ lying in some neighbourhood of $\mathscr{G}$ we have $|\partial\mathcal{C}_o(N)|\geq \varepsilon |\mathcal{C}_o|^{\frac{d(\mathscr{H})-1}{d(\mathscr{H})}}$ whenever $20N<\mathrm{diam}(\mathcal{C}_o)<\infty$. By \cite[Corollary 1.4]{tesseratointon}, there exists $r\ge 1$ such that for every $\mathscr{H}\in \mathfrak{G}$ with $R(\mathscr{H},\mathscr{G})\geq r$, the dimension $d(\mathscr{H})$ is uniformly bounded. For such a $\mathscr{H}$, applying Proposition~\ref{prop:isoperimetric inequality} to the complement of the infinite connected component of $\mathscr{H}\setminus \mathcal{C}_o(N)$ yields the desired inequality for the exterior exposed boundary\footnote{The exterior exposed boundary of a finite set $A$ is the set of vertices that belong to an infinite connected component of the complement of $A$ and that furthermore are adjacent to some vertex in $A$.} of $\mathcal{C}_o(N)$.
Besides, as $r\ge 1$, these $\mathscr{H}$ all have the same degree, so that moving from exterior exposed boundary to exposed boundary can only distort inequalities by a bounded factor.
\end{proof}

\begin{remark}
    We now sketch how one can obtain a matching lower bound at the stretched exponential scale, namely that for every $p_0>p_c(\mathscr{G})$ and $\varepsilon>0$ there exists $c'>0$ such that the set of all $\mathscr{H}\in\mathfrak{G}$ satisfying the following condition for all $k$ is a neighbourhood of  $\mathscr{G}$:
    \[\forall p\in [p_0,1-\varepsilon], \quad
\mathbb{P}_{p,\mathscr{H}}(k<|\mathcal{C}_o|<\infty)\ge \exp\left(-c'k^{\frac{d(\mathscr{H})-1}{d(\mathscr{H})}}\right).
    \] 
 Let $\mathscr{G}\in \mathfrak{G}$ and $p_0\in(p_c(\mathscr{G}),1]$. By Theorem~\ref{thm:analyticity-local}, there exists $\varepsilon_0>0$ such that $\theta_{\mathscr{H}}(p)\geq \varepsilon_0$ for every $\mathscr{H}$ close enough to $\mathscr{G}$ and every $p\geq p_0$. Now consider $k\geq 1$, and let $\mathcal{N}$ be the number of vertices $x\in B_k$ such that the event $\{x\longleftrightarrow \partial B_k\}$ happens. Then, $\mathbb{E}_{p}(\mathcal{N})\geq \varepsilon_0 |B_k|$. Letting $\delta:=\mathbb{P}_p(\mathcal{N}\geq \varepsilon_0 |B_k|/2)$, we see that 
    $$
    \varepsilon_0 |B_k|\leq \mathbb{E}_{p}(\mathcal{N})\leq (1-\delta)\varepsilon_0 |B_k|/2 +\delta|B_k|,
    $$
    which implies that $\delta\geq \varepsilon_0/2$. Since $\partial B_k$ is $t$-coarse connected, we can now open all edges in the $t$-neighbourhood of $\partial B_k$ to connect all clusters intersecting $\partial B_k$. By taking the intersection with the event $\{o\longleftrightarrow \partial B_k\}$ and applying the Harris--FKG inequality we obtain that \[\mathbb{P}_{p}(|\mathcal{C}_o\cap B_{k+t}|\geq \varepsilon |B_k|/2)\geq e^{-c'|\partial B_k|}.\] If it is indeed the case that $|\partial B_k|/|B_k|=O(1/k)$, then we can conclude by closing all edges of the infinite connected component of $\mathscr{H}\setminus B_k$ that intersect both $S_{k+t}$ and $S_{k+t+1}$. Lemma~4.2 from \cite{supercritpoly} guarantees that this occurs at sufficiently many scales $k$ for the argument to adapt.
\end{remark}

\subsection{Exponential cluster repulsion and isoperimetry}\label{sec:cluster repulsion}

In this section, our aim is to prove that the probability $\mathcal{C}_o$ is finite but touches the infinite cluster at $t$ places decays exponentially in $t$. As a corollary, we obtain an isoperimetric inequality for the infinite cluster.

Let us first give the formal definition. Consider a graph $\mathscr{G}\in \mathfrak{G}$. For two distinct clusters $\mathcal{C}$ and $\mathcal{C}'$, a \emph{touching edge} is an edge of $\mathscr{G}$ with one endpoint in $\mathcal{C}$ and the other in $\mathcal{C}'$. The number of such edges is denoted $\tau(\mathcal{C},\mathcal{C}')$. For supercritical percolation on $\mathscr{G}$, the almost surely unique infinite cluster is denoted $\mathcal{C}_{\infty}$.

We now state and prove the following result.

\begin{theorem}\label{thm:cluster repulsion}
Let $\mathscr{G}\in\mathfrak{G}$ and $p_0\in (p_c(\mathscr{G}),1]$. Then, there is a constant $c>0$ such that the set of all $\mathscr{H}\in\mathfrak{G}$ satisfying $p_c(\mathscr{H})<p_0$ and the following condition for all $k$ is a neighbourhood of the graph $\mathscr{G}$:
    \[ \forall p\in [p_0,1], \quad
\mathbb{P}_{p,\mathscr{H}}\big(|\mathcal{C}_o|<\infty,~ \tau(\mathcal{C}_o,\mathcal{C}_{\infty})\ge k\big)\le \exp(-ck).
    \]    
\end{theorem}
\begin{proof}
Let $p_0\in (p_c(\mathscr{G}),1]$. Arguing as in the proof of Theorem~\ref{thm:sharpness-local}, we see that it suffices to find an $\varepsilon>0$ such that the desired uniform exponential decay holds for every $p\in (p_0-\varepsilon,p_0+\varepsilon)\cap [0,1]$. 

To this end, consider some large enough constants $N,r>0$ and a graph $\mathscr{H}\in \mathfrak{G}$ such that $R(\mathscr{G},\mathscr{H})\ge r$. Consider a configuration $\omega$ such that $\mathcal{C}_o$ is finite, and note that $|\mathcal{C}_o|\ge \tau(\mathcal{C}_o,\mathcal{C}_{\infty})/D$, where $D$ is degree of $\mathscr{G}$. Hence, if $\tau(\mathcal{C}_o,\mathcal{C}_{\infty})>D|B_{20N,\mathscr{G}}|$ and $r\ge 20N$, then $\textup{diam}(\mathcal{C}_o)>20N$. Consider the associated interface $\mathcal{I}$ and recall that the closed edges of $\overline{\mathcal{I}}$ form a bond-cutset between $o$ and infinity by Lemma~\ref{lem:cutset}. Let us denote this bond-cutset $\Pi$. Then $\Pi$ needs to contain all touching edges between $\mathcal{C}_o$ and $\mathcal{C}_{\infty}$. Hence we have 
\[
|\mathcal{I}|\ge \frac{|\overline{\mathcal{I}}|}{|B_{10N,\mathscr{G}}|}\ge \frac{\tau(\mathcal{C}_o,\mathcal{C}_{\infty})}{D|B_{10N,\mathscr{G}}|}.
\]
The desired result follows from Proposition~\ref{prop: exp dec} for large values of $k$.
Finally, since we have $|\mathcal{C}_o|\ge \tau(\mathcal{C}_o,\mathcal{C}_{\infty})/D$, we can use Theorem~\ref{thm:cluster size} to handle the small values of $k$. 
\end{proof}

Given a finite subset $S$ of $\mathcal{C}_{\infty}$, we distinguish between two notions of edge boundary in~$\mathcal{C}_{\infty}$. Following Pete's notation \cite{pete2008note}, we denote $\partial^+_{\mathcal{C}_{\infty}} S$
the set of open edges of $\mathcal{C}_{\infty}$ with one endpoint in $S$ and the other in the infinite connected component of $\mathcal{C}_{\infty}\setminus S$. We also denote $\tilde\partial_{\mathcal{C}_{\infty}} S$
the set of edges in $\mathscr{G}$ with one endpoint in $S$ and the other in the infinite connected component of $\mathcal{C}_{\infty}\setminus S$.

We now prove the following anchored isoperimetric inequality for the infinite cluster. The proof is similar to that of \cite[Theorem 1.2]{pete2008note}.

\begin{theorem}
Let $\mathscr{G}\in\mathfrak{G}$ and $p_0\in (p_c(\mathscr{G}),1]$. Then, there is a constant $c>0$ such that the set of all $\mathscr{H}\in\mathfrak{G}$ satisfying $p_c(\mathscr{H})<p_0$ and the following condition is a neighbourhood of the graph $\mathscr{G}$: for every $p\in [p_0,1]$, there exists $\alpha>0$ such that for all $M\geq 1$, we have
\[\mathbb{P}_p\left(\exists S\text{ connected}: o\in S\subset \mathcal{C}_{\infty},\ M\leq |S|<\infty,\  \frac{|\partial^+_{\mathcal{C}_{\infty}} S|}{|S|^{1-1/d(\mathscr{H})}}\leq \alpha\right)\leq \exp\left(-cM^{1-1/d(\mathscr{H})}\right).
\]
\end{theorem}
\begin{remark}
We note that the value of $\alpha$ that the argument gives depends on $p$ and degenerates to $0$ as $p$ tends to $1$, while the value of $c$ depends only on $p_0$.    
\end{remark}

\begin{proof}
The result holds for $p=1$ by Proposition~\ref{prop:isoperimetric inequality}, so let us assume that $p_0\in (p_c(\mathscr{G}),1)$ and prove the result for $p\in [p_0,1)$. One can handle the case where $|S|=m$ for some small $m$ by opening all edges in the ball of radius $2m$ and forcing the ball to connect to infinity to ensure that $\frac{|\partial^+_{\mathcal{C}_{\infty}} S|}{|S|^{1-1/d(\mathscr{H})}}$ remains bounded away from $0$, so it remains to handle the case where $M$ is large enough. 

Define the event
\[
\mathcal{Y}(m,t):=\left\{|\mathcal{C}_o|=m \quad\text{and}\quad \tau(\mathcal{C}_o,\mathcal{C}_\infty)=t\right\}.
\]
By Theorems~\ref{thm:cluster size} and \ref{thm:cluster repulsion}, there exist $r,c>0$ such that for every graph $\mathscr{H}\in \mathfrak{G}$ satisfying $R(\mathscr{G},\mathscr{H})\ge r$ and every $p\geq p_0$, we have
\[
\mathbb{P}_p(\mathcal{Y}(m,t))\leq \exp\bigl(-c\max\{m^{1-1/d},t\}\bigr),
\]
where $d$ denotes the dimension of $\mathscr{H}$. We now introduce the events\[
\mathcal{X}(m,t,s):=\left\{\exists S\text{ connected}: o\in S\subset \mathcal{C}_{\infty},\ |S|=m,\ |\tilde\partial_{\mathcal{C}_{\infty}} S|=t,\ |\partial^+_{\mathcal{C}_{\infty}} S|=s\right\}.
\]    
and 
\[
\mathcal{E}_{\alpha,M}:=\left\{\exists S\text{ connected}: o\in S\subset \mathcal{C}_{\infty},\ M\leq |S|<\infty,\  \frac{|\partial^+_{\mathcal{C}_{\infty}} S|}{|S|^{1-1/d}}\leq \alpha\right\}.
\]
By a union bound, and ignoring integer parts to simplify the notation, we have
\begin{equation}\label{eq:bound 1}
\mathbb{P}_p\left(\mathcal{E}_{\alpha,M}\right)\leq \sum_{m\geq M}\sum_{t=1}^{Dm}\sum_{s=1}^{c_{m,t}}\mathbb{P}_p(\mathcal{X}(m,t,s)),
\end{equation}
where $c_{m,t}=\min\{\alpha m^{1-1/d},t\}$, and $d,D$ are the dimension and degree of $\mathscr{H}$, respectively. We now bound $\mathbb{P}_p(\mathcal{X}(m,t,s))$ using the multi-valued map principle.

Given a configuration $\xi\in \{0,1\}^{E(B_{2m})}$, we let $\mathcal{X}(m,t,s,\xi)$ denote the event $\{\omega|_{E(B_{2m})}=\xi \text{ and } \omega\in\mathcal{X}(m,t,s)\}$. For a $\xi$ such that $\mathbb{P}_p(\mathcal{X}(m,t,s,\xi))>0$ and a finite connected set $S\ni o$, define a new event $F(\xi,S)\subset\mathcal{Y}(m,t)$ by closing the edges in $\partial^+_{\mathcal{C}_{\infty}} S$. Note that $\mathbb{P}(F(\xi,S))=Q^s\mathbb{P}(\mathcal{X}(m,t,s,\xi))$, where $Q=\frac{p}{1-p}$. For each $\omega'$ in the image of $F$ such that $\omega'\in\mathcal{Y}(m,t)$, there are ${t\choose s}$ pre-images $(\xi,S)$ under $F$. Hence, we have
\[
\mathbb{P}_p(\mathcal{X}(m,t,s))\leq {t\choose s}Q^s\mathbb{P}_p(\mathcal{Y}(m,t)).
\]
With these bounds at hand, we estimate $\sum_{t=1}^{Dm}\sum_{s=1}^{c_{m,t}}\mathbb{P}_p(\mathcal{X}(m,t,s))$ by splitting it into $2$ sums according to whether $t\leq K\alpha m^{1-1/d}$ or $t>K\alpha m^{1-1/d}$, where $K=Q+2+4/c$.
To this end, note that for $\alpha>0$ small enough to satisfy $K \alpha \log(1+Q)<c/2$, we have that for every $m$ large enough,
\begin{equation}\label{eq:bound 2}
\sum_{t=1}^{K \alpha m^{1-1/d}} \; \sum_{s=1}^{c_{m,t}}\ \mathbb{P}_p(\mathcal{X}(m,t,s))\leq \sum_{t=1}^{K \alpha m^{1-1/d}} (1+Q)^t\exp\bigl(-cm^{1-1/d}\bigr)\leq \exp\left(-\frac{c}{2}m^{1-1/d}\right).
\end{equation}

For the second sum, note that for $t>K\alpha m^{1-1/d}>2\alpha m^{1-1/d}$ and $s\leq \alpha m^{1-1/d}$, we have
\[
{t \choose s}Q^s\leq {t \choose \alpha  m^{1-1/d}}(1+Q)^s\leq \left(\frac{et}{\alpha  m^{1-1/d}}\right)^{\alpha  m^{1-1/d}}(1+Q)^{\alpha  m^{1-1/d}},
\]
hence
\[
\sum_{s=1}^{\alpha m^{1-1/d}} {t \choose s} Q^s \leq \alpha m^{1-1/d}\exp\Bigl(\alpha\bigl(1+\log (\beta/\alpha)+\log(1+Q)\bigr)\,m^{1-1/d}\Bigr),
\]
where $\beta=t/m^{1-1/d}$. Since $\beta/\alpha> 1+Q$ and $\alpha<cK\alpha/4<c\beta/4$, if $\alpha$ is sufficiently small, then we have
\[
\alpha(1+\log(\beta/\alpha)+\log(1+Q)) < \alpha(1+2\log(\beta/\alpha))  < c\beta/2.
\]
Hence, for every $m$ large enough, we get
\begin{equation}\label{eq:bound 3}
\sum_{t=1+Q \alpha m^{1-1/d}}^{Dm} \; \sum_{s=1}^{c_{m,t}}\ \mathbb{P}_p(\mathcal{X}(m,t,s))\leq \sum_{t>K \alpha m^{1-1/d}} \exp\left(-\frac{c}{2}t\right).
\end{equation}
The desired result follows from putting together \eqref{eq:bound 1}, \eqref{eq:bound 2} and \eqref{eq:bound 3}.
\end{proof}

\begin{remark}
The above theorem implies in particular the following result: for every $\mathscr{G}\in \mathfrak{G}$, the anchored isoperimetric dimension of the cluster of the origin is equal to the dimension of $\mathscr{G}$ as soon as this cluster is infinite. This result for fixed graphs can alternatively be obtained from arguments in \cite{pete2008note} and \cite{supercritpoly}. The parallel work \cite{AK26} provides a more general result covering long-range percolation for fixed graphs in $\mathfrak{G}$ for which supercritical sharpness holds; see their Theorem~1.5. Still, we believe that the viewpoint of interfaces adopted in the present paper is a relevant way to revisit the techniques from \cite{pete2008note}, even for the hypercubic lattice $\mathbb{Z}^d$.
\end{remark}

\hypertarget{target:equiv}{\section{Context and proof of Proposition~\ref{prop:equiv}}}
\label{sec:equiv}

One of the biggest open problems in percolation theory is whether there is an infinite cluster at criticality for Bernoulli percolation on $\mathbb{Z}^d$ ($d\ge 2$) with the usual nearest neighbour structure. This question has been answered in the negative in dimension $2$ \cite{KestenCritical} and in dimensions $d\geq 11$ \cite{HS90, FH17}, but remains open in the intermediate dimensions. Recently, this question has attracted interest beyond the setting of the hypercubic lattice.

Recall that we define $\Ab$ to be the set of all graphs that can be written as a product of finitely many cycle-graphs with at least two infinite factors; see page~\pageref{page:ab}. Let us call such a graph a \defini{slab} if the number of infinite factors is exactly 2.
In \cite{slabs}, it is proved that slabs satisfy $\theta(p_c)=0$. See also \cite{BLPS99, Timar06, PPS06, H16, HH21} for related results on graphs growing faster than $\mathbb{Z}^d$.

One of the main motivations for studying the critical behaviour of percolation on slabs is the fact that for $d\geq 3$, the lattice $\mathbb{Z}^d$ can be obtained as the local limit of $\mathbb{Z}^2\times (\mathbb{Z}/n\mathbb{Z})^{d-2}$, potentially offering a pathway to understanding the critical behaviour of percolation on $\mathbb{Z}^d$.

This context being given, we now prove Proposition~\ref{prop:equiv} which, in the framework of $\Ab$, relates several regularity properties of the $\theta$ map to the $\theta(p_c)=0$ conjecture.

\begin{proof}[Proof of Proposition~\ref{prop:equiv}]
We shall prove the following implications, which generate all others:
\vspace{0.4cm}

~\hfill\begin{tikzcd}
  \ref{item:1} \arrow[rr, Leftrightarrow] && \ref{item:2}\arrow[rr, Leftrightarrow, blue]\arrow[d, Rightarrow,blue]&&\ref{item:5} \\
  \ref{item:3} \arrow[u, Rightarrow,blue] && \ref{item:4}\arrow[ll, Rightarrow]&&
\end{tikzcd}\hfill~

\vspace{0.4cm}

\noindent Only the blue implications remain to be proved, since $\ref{item:4}\implies\ref{item:3}$ is trivial and $\ref{item:1}\iff\ref{item:2}$ is well known; the latter follows, for example, from our Theorem~\ref{thm:analyticity-poly} and the right-continuity of $\theta$. There is no need to take care of \ref{item:6} as it is by definition equivalent to the conjunction $\ref{item:2}\&\ref{item:5}$.

Let $\mathscr{G}\in \Ab$. If $\mathscr{G}$ is the product of the square lattice with finitely many finite cycles (possibly none), then it follows from \cite{slabs} that $\theta_\mathscr{G}$ is continuous. Otherwise, $\mathscr{G}$ can be written as $\mathbb{Z}^2\times\mathbb{Z}^k\times \mathscr{H}$ with $k\ge 1$ and $\mathscr{H}$ a finite product of finite cycles. In this case, we can introduce the graph sequence $\mathscr{G}_n$ corresponding to $\mathbb{Z}^2\times(\mathbb{Z}/n\mathbb{Z})^k\times \mathscr{H}$, which indeed satisfies $\mathscr{G}_n\xrightarrow[n\to\infty]{}\mathscr{G}$.

Let us use this construction to prove $\ref{item:5}\implies \ref{item:2}$. By \cite{slabs}, each $\theta_{\mathscr{G}_n}$ is continuous. Therefore, if \ref{item:5} holds, the function $\theta_{\mathscr{G}}$ is continuous as the uniform limit of the continuous functions $\theta_{\mathscr{G}_n}$, whence $\ref{item:5}\implies \ref{item:2}$.
We can also use this construction to establish $\ref{item:3}\implies \ref{item:1}$. By \cite{MS19}, for every $n$, we have $p_c(\mathscr{G}_n)>p_c(\mathscr{G})$. In particular, we have $\theta_{\mathscr{G}_n}(p_c(\mathscr{G}))=0$. As a result, if \ref{item:3} holds, then $\theta_{\mathscr{G}}(p_c(\mathscr{G}))=\lim_{n}\theta_{\mathscr{G}_n}(p_c(\mathscr{G}))=0$.

At last, let us assume \ref{item:2} and prove that both \ref{item:4} and \ref{item:5} hold. Let $(\mathscr{G}_n)$ be a sequence of elements of $\Ab$ converging to some $\mathscr{G}$ in $\Ab$.
We know that:
\begin{itemize}
    \item by \ref{item:2}, the function $\theta_\mathscr{G}$ is continuous on $[0,1]$,
    \item every $\theta_{\mathscr{G}_n}$ is non-decreasing on $[0,1]$,
    \item by Corollary~\ref{coro:kdiff} and by lower semicontinuity of $p_c$ on the space of transitive graphs, the sequence $(\theta_{\mathscr{G}_n})$ converges pointwise to $\theta_\mathscr{G}$ on $[0,1]\setminus \{p_c(\mathscr{G})\}$.
\end{itemize}
It is easy to check that these three properties entail that $\theta_{\mathscr{G}_n}$ converges uniformly to $\theta_\mathscr{G}$. Therefore, the implication $\ref{item:2}\implies \ref{item:5}$ holds and, as $\theta_{\mathscr{G}}$ is uniformly continuous, so does the implication $\ref{item:2}\implies \ref{item:4}$.
\end{proof}

\begin{remark}
    We state Proposition~\ref{prop:equiv} in the setup of $\Ab$ rather than in that of $\mathfrak{G}$ because we need every graph of our class to be well approximated by graphs of our class that further satisfy $\theta(p_c)=0$. More precisely, for every $\mathscr{G}$, we need a sequence of graphs that are quotients of $\mathscr{G}$, converge to $\mathscr{G}$, and satisfy $\theta(p_c)=0$. Working with $\mathfrak{G}$ would raise two issues:
    \begin{itemize}
        \item it is not clear at all how to approximate a graph in $\mathfrak{G}$ by two-dimensional transitive graphs, 
        \item the article \cite{slabs} does not deal with all two-dimensional transitive graphs but only such graphs having all symmetries of the square lattice.
    \end{itemize}
Our arguments actually hold if $\Ab$ is replaced with the class of all graphs of the form $\mathbb{Z}^d\times\mathscr{H}$, where $\mathbb{Z}^d$ is endowed with its usual graph structure, $d$ is at least 2 and $\mathscr{H}$ is a finite transitive graph.
\end{remark}

\section{Investigating $\theta$ near $p_c$}
\label{sec:theta-near-pc}

The unconditional results of this paper are stated away from $p_c$. It is worthwhile investigating what happens near $p_c$ and, to this end, it is useful to introduce a few definitions.

For every $1\le n\le \infty$, let $f_n$ be a function from some domain $D_n$ to $\mathbb{R}$. In this section, we say that $(f_n)_{n\in\mathbb{N}}$ converges \defini{uniformly} to $f_\infty$ if
\[
\sup_{x\in D_n\cap D_\infty} |f_n(x) -f_\infty(x)|\xrightarrow[n\to\infty]{}0.
\]
Given $\mathscr{G}\in\mathfrak{G}$, we denote by $\tilde\theta_\mathscr{G}$ the restriction of $\theta_\mathscr{G}$ to $(p_c(\mathscr{G}),1]$.

\begin{proposition}
    The following statements are equivalent:
    \begin{itemize}
        \item every graph in $\Ab$ satisfies $\theta(p_c)=0$,
        \item for every sequence $(\mathscr{G})_{n\le \infty}$ of elements of $\Ab$ satisfying $\mathscr{G}_n\xrightarrow[n\to\infty]{}\mathscr{G}_\infty$, we have uniform convergence of $\tilde\theta_{\mathscr{G}_n}$ to $\tilde\theta_{\mathscr{G}_\infty}$.
    \end{itemize}
\end{proposition}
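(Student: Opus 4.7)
The plan is to prove the two implications separately and to piggyback on Proposition~\ref{prop:equiv}.

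For the direction assuming $\theta(p_c)=0$ throughout $\Ab$, I would simply invoke item~\ref{item:5} of Proposition~\ref{prop:equiv}: the map $\mathscr{G}\mapsto\theta_\mathscr{G}$ is continuous from $\Ab$ to $\mathcal{L}^\infty([0,1],\mathbb{R})$. Any convergent sequence $\mathscr{G}_n\to\mathscr{G}_\infty$ in $\Ab$ therefore satisfies $\sup_{p\in[0,1]}|\theta_{\mathscr{G}_n}(p)-\theta_{\mathscr{G}_\infty}(p)|\to 0$, which dominates the supremum over $D_n\cap D_\infty\subset[0,1]$ a fortiori.

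For the converse, I fix $\mathscr{G}\in\Ab$ and aim to establish $\theta_\mathscr{G}(p_c(\mathscr{G}))=0$. If $\mathscr{G}$ is itself a slab, this is already the main theorem of \cite{slabs}. Otherwise $\mathscr{G}$ may be written as $\mathbb{Z}^2\times\mathbb{Z}^k\times\mathscr{H}$ with $k\ge 1$ and $\mathscr{H}$ a finite product of finite cycles, and I would approximate it by the slab sequence $\mathscr{G}_n:=\mathbb{Z}^2\times(\mathbb{Z}/n\mathbb{Z})^k\times\mathscr{H}$, which converges to $\mathscr{G}$ in $\Ab$. By \cite{slabs}, each $\theta_{\mathscr{G}_n}$ is continuous on $[0,1]$ and satisfies $\theta_{\mathscr{G}_n}(p_c(\mathscr{G}_n))=0$; by \cite{MS19} the strict inequality $p_c(\mathscr{G}_n)>p_c(\mathscr{G})$ holds for every $n$, so that $D_n\cap D_\infty=(p_c(\mathscr{G}_n),1]$. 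Given $\varepsilon>0$, the uniform convergence hypothesis yields some $n$ with $|\theta_{\mathscr{G}_n}(p)-\theta_\mathscr{G}(p)|<\varepsilon$ throughout this interval. Continuity of $\theta_{\mathscr{G}_n}$ at $p_c(\mathscr{G}_n)$ then produces some $p\in(p_c(\mathscr{G}_n),1]$ with $\theta_{\mathscr{G}_n}(p)<\varepsilon$, whence $\theta_\mathscr{G}(p)<2\varepsilon$. Since $\theta_\mathscr{G}$ is non-decreasing and $p>p_c(\mathscr{G})$, I conclude $\theta_\mathscr{G}(p_c(\mathscr{G}))\le\theta_\mathscr{G}(p)<2\varepsilon$, and letting $\varepsilon\to 0$ finishes the argument.

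The delicate point is that the declared notion of uniform convergence only controls values over the intersection $D_n\cap D_\infty$, and this intersection shrinks precisely in the critical region where the difficulty lies. What rescues the argument is the strict inequality $p_c(\mathscr{G}_n)>p_c(\mathscr{G})$ from \cite{MS19} for the slab approximations, combined with the continuity of each $\theta_{\mathscr{G}_n}$ at its own critical parameter: these two ingredients together allow us to find a probe point $p$ slightly above $p_c(\mathscr{G}_n)$ (hence strictly above $p_c(\mathscr{G})$) at which the comparison in $D_n\cap D_\infty$ is meaningful and the slab value $\theta_{\mathscr{G}_n}(p)$ is already small. This is also the reason the statement is restricted to $\Ab$ rather than being formulated for the broader class $\mathfrak{G}$.
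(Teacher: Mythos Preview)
Your proposal is correct and follows essentially the same approach as the paper: both directions are handled identically, with the forward implication reducing to item~\ref{item:5} of Proposition~\ref{prop:equiv} and the reverse using the slab approximations $\mathscr{G}_n$ together with continuity of $\theta_{\mathscr{G}_n}$ from \cite{slabs}. The only cosmetic difference is that you invoke \cite{MS19} for the strict inequality $p_c(\mathscr{G}_n)>p_c(\mathscr{G})$, whereas the paper cites \cite{MR1423907} for the weak inequality $p_c(\mathscr{G}_n)\ge p_c(\mathscr{G}_\infty)$; in fact the weak inequality already suffices for your argument, since you only use it to identify $D_n\cap D_\infty=(p_c(\mathscr{G}_n),1]$ and to ensure the probe point satisfies $p>p_c(\mathscr{G})$.
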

\begin{proof}
    The first condition implies the second one because Proposition~\ref{prop:equiv} asserts that its conditions \ref{item:1} and \ref{item:5} are equivalent.
    We now prove the reverse implication. Let $\mathscr{G}\in \Ab$. Construct $\mathscr{G}_n$ as in the proof of Proposition~\ref{prop:equiv}. Observe that $\mathscr{G}_n$ is a quotient of $\mathscr{G}$, so that Proposition~1 from \cite{MR1423907} guarantees that $p_c(\mathscr{G}_n)\ge p_c(\mathscr{G}_\infty)$. This means that the domain of $\tilde\theta_{\mathscr{G}_\infty}$ contains that of $\tilde\theta_{\mathscr{G}_n}$. As \cite{slabs} guarantees that $\theta_{\mathscr{G}_n}$ is continuous, we have $\inf_{p>p_c(\mathscr{G}_n)}\tilde\theta_{\mathscr{G}_n}(x)=0$. If we assume uniform convergence of $\tilde\theta_{\mathscr{G}_n}$ to $\tilde\theta_{\mathscr{G}_\infty}$, this yields $\inf_{p>p_c(\mathscr{G})}\tilde\theta_{\mathscr{G}}(x)\le0$, hence that $\mathscr{G}$ satisfies $\theta(p_c)=0$.
\end{proof}

We conclude by asking the following question.

\begin{question}
\label{what-is-your-quest}
Is it true that for every sequence $(\mathscr{G}_n)_{n\le \infty}$ of elements of $\Ab$ such that $\mathscr{G}_n$ converges to $\mathscr{G}_\infty$, we have uniform convergence of the derivative $\tilde\theta'_{\mathscr{G}_n}$ to $\tilde\theta'_{\mathscr{G}_\infty}$?
\end{question}

We believe that the answer is negative. Indeed, endowed with their usual graph structures, we have $\mathbb{Z}^2\times(\mathbb{Z}/n\mathbb{Z})^{98}\xrightarrow[n\to\infty]{}\mathbb{Z}^{100}$. Because the number 100 is large enough, it is known that, for $\mathbb{Z}^{100}$, the function $\tilde\theta'$ is bounded; see \cite{BA91, HS90}. In contrast, using the main result of \cite{KZderivative} and the mean-value theorem, one can see that, for the square lattice, the function $\tilde\theta'$ is unbounded. The universality paradigm inclines us to think that, as $\mathbb{Z}^2\times(\mathbb{Z}/n\mathbb{Z})^{98}$ is at large scale two-dimensional, it will behave as the square lattice at or near its critical point. It thus seems likely that every $\mathbb{Z}^2\times(\mathbb{Z}/n\mathbb{Z})^{98}$ has unbounded $\tilde \theta'$. This would be incompatible with $\tilde\theta'$ being bounded for $\mathbb{Z}^{100}$ and uniform convergence of $\tilde\theta'$ holding.

It is interesting to notice that, for the notion of convergence studied in this section, one expects different behaviours for $\tilde\theta$ and its derivative, while Corollary~\ref{coro:kdiff} asserts that, away from $p_c$, everything behaves nicely for $\theta$, $\theta'$, and derivatives of higher order.

We conclude by observing that if the answer to Question~\ref{what-is-your-quest} were to be affirmative, this would imply $\theta(p_c)=0$ on $\Ab$, as $\theta(p_c)=1-\int_{p_c}^1\theta'(p)\,\mathrm{d}p$. Indeed, one can approximate $\mathscr{G}$ by slabs $\mathscr{G}_n$ as in the proof of Proposition~\ref{prop:equiv} and, were the answer affirmative, the equality with a star would be valid in the following computation:
\begin{eqnarray*}
\theta_{\mathscr{G}}(p_c(\mathscr{G}))&=&1-\int_{p_c(\mathscr{G})}^1\theta_{\mathscr{G}}'(p)\,\mathrm{d}p=1-\lim_n \int_{p_c(\mathscr{G}_n)}^1\theta_{\mathscr{G}}'(p)\,\mathrm{d}p\\&\overset{\star}{=}& 1-\lim_n \int_{p_c(\mathscr{G}_n)}^1\theta_{\mathscr{G}_n}'(p)\,\mathrm{d}p=\lim_n \theta_{\mathscr{G}_n}(p_c(\mathscr{G}_n))=0.
\end{eqnarray*}

\newpage

\small 
\bibliography{biblio}
\bibliographystyle{alpha}
\end{document}